
\documentclass[reqno,11pt]{amsart}
\usepackage[colorlinks=true, linkcolor=blue, citecolor=blue]{hyperref}

\usepackage{amssymb}   
\usepackage{amsmath, graphicx, rotating}
\usepackage{color}     
\usepackage{soul}
\usepackage[dvipsnames]{xcolor}     
\usepackage{tcolorbox}
\usepackage{comment}

\usepackage{ifthen}
\usepackage{xkeyval}
\usepackage{todonotes}
\setlength{\marginparwidth}{2.5cm}




\usepackage{lmodern}
\usepackage[english]{babel}

\usepackage{ upgreek }
\usepackage{stmaryrd}
\SetSymbolFont{stmry}{bold}{U}{stmry}{m}{n}
\usepackage{amsthm}
\usepackage{float}

\usepackage{ bbm }
\usepackage{ stmaryrd }
\usepackage{ mathrsfs }
\usepackage{ frcursive }
\usepackage{ comment }

\usepackage{pgf, tikz}
\usetikzlibrary{shapes}
\usepackage{varioref}
\usepackage{enumitem}
\usepackage{longtable}

\usepackage{mathtools}

\usepackage{dsfont}

\setcounter{MaxMatrixCols}{10}
\definecolor{rouge}{rgb}{0.7,0.00,0.00}
\definecolor{vert}{rgb}{0.00,0.5,0.00}
\definecolor{bleu}{rgb}{0.00,0.00,0.8}

\usepackage[margin=1.3in]{geometry}

\newtheorem{theorem}{Theorem}[section]
\newtheorem*{theorem*}{Theorem}
\newtheorem{lemma}[theorem]{Lemma}

\newtheorem{corollary}[theorem]{Corollary}

\labelformat{hypothesis}{\textbf{M\kern-0.1mm#1}}

\newtheorem{condition}{Condition}

\labelformat{conditionA}{\textbf{A\kern-0.1mm#1}}

\renewcommand\dots{\hbox to 1em{.\hss.\hss.}}

\theoremstyle{definition}

\def \eref#1{\hbox{(\ref{#1})}}

\numberwithin{equation}{section}

\newcommand\ee{\varepsilon}

\DeclareMathOperator{\supp}{supp}

\def\geq{\geqslant}
\def\leq{\leqslant}

\def\bb#1{\mathbb{#1}}

\def\Rd {\bb R ^d}
\def\Rd*{(\bb R ^d)^*}
\def\Pd{{\mathbb{P}}^{d-1}}
\def\Pd*{(\mathbb{P}^{d-1})^*}
\def\PV{{\mathbb{P}}({\mathbb{V}}) }
\def\PV*{({\mathbb{P}}({\mathbb{V}}))^* }

\begin{document}
\title[Conditioned random walks]
{Gaussian heat kernel asymptotics for\\conditioned random walks} 
\author{ Ion~Grama}
\curraddr[I. Grama]{ Universit\'{e} de Bretagne Sud, CRYC, 56017 Vannes, France}
\email[I. Grama]{ion.grama@univ-ubs.fr}
\author{Hui Xiao}
\curraddr[H. Xiao]{Academy of Mathematics and Systems Science, Chinese Academy of Sciences, Beijing 100190, China}
\email[H. Xiao]{xiaohui@amss.ac.cn}

\date{\today }
\subjclass[2020]{Primary 60F05, 60F17, 60G50. Secondary 60G40, 60J05}
\keywords{Random walk conditioned to stay positive, exit time, heat kernel, central limit theorem, Berry-Esseen bound}

\begin{abstract}   
Consider a random walk $S_n=\sum_{i=1}^n X_i$ with independent and identically distributed real-valued increments 
with  zero mean, finite variance and moment of order $2 + \delta$ for some $\delta>0$. 
For any starting point $x\in \mathbb R$, let $\tau_x  = \inf \left\{ k\geq 1: x+S_{k} < 0 \right\}$ 
denote the first time when the random walk $x+S_n$ 
exits the half-line $[0,\infty)$. We investigate the uniform asymptotic behavior over $x\in \mathbb R$
 of the persistence probability $\mathbb P (\tau_x >n)$ and the joint distribution 
$\mathbb{P} \left( x + S_n  \leq u,  \tau_x > n \right)$, for $u\geq 0$,  as $n \to \infty$.
New limit theorems for these probabilities are established based on the heat kernel approximations.  
Additionally, we evaluate the rate of convergence by proving Berry-Esseen type bounds. 
\end{abstract}

\maketitle


\section{Introduction and main results}\label{Ssec-Intro}
\subsection{Preliminaries and notation}\label{Subsec-Intro}
Assume that on the probability space $\left( \Omega ,\mathscr{F},\mathbb{P}\right)$ 
we are given a sequence of independent identically distributed real-valued random variables $(X_i)_{i\geq 1}$ 
with $\bb E (X_1) = 0$ and  $\bb E (X_1^2) = \sigma^2 \in (0, \infty)$.   
Define the random walk $S_n$ by
\begin{equation} \label{Sum-iid-000}
S_n =\sum_{i=1}^{n}X_{i}, \quad n \geq 1.  
\end{equation}
For any starting point $x\in \bb R:=(-\infty,\infty)$, 
consider the first moment $\tau_x$ when the random walk $(x+S_n )_{n\geq 1}$ 
exits the non-negative half-line $\bb R_+ := [0,\infty)$, 
which is defined as  
\begin{align*}
\tau_x = \inf \left\{ k \geq 1: x+S_{k} < 0 \right\}
 \quad \mbox{with} \quad   \inf\emptyset = \infty.   
\end{align*}
For $n\geq 1$ and $x \in \bb R$ possibly depending on $n$ and $u \in \bb R$, 
consider the probabilities 
\begin{align} \label{Objective-proba001}
\mathbb{P} \left( \tau_x >n\right) \quad \mbox{and} 
\quad 
\mathbb{P} \left(  \frac{ x + S_n }{\sigma\sqrt{n}} \leq  u,  \tau_x >n\right). 
\end{align} 
The behavior of the random walks conditioned to stay positive and, specifically, 
the asymptotics of the probabilities \eqref{Objective-proba001} has been an area of interest for many researchers. 
Notable contributions include works by 
L\'{e}vy \cite{Levy37},    
Borovkov \cite{Borovk62, Bor70, Borovkov04a, Borovkov04b},   
Feller \cite{Fel64}, 
Spitzer \cite{Spitzer},
Kozlov \cite{Kozlov76}, 
Bolthausen \cite{Bolth}, 
Iglehart \cite{Igle74}, 
Eppel \cite{Eppel-1979},  
Bertoin and Doney \cite{BertDoney94},  
Vatutin and Wachtel \cite{VatWacht09}, 
Doney and Jones \cite{DJ12},
Denisov and Wachtel \cite{Den Wachtel 2011, DW19, DW24},
Kersting and Vatutin \cite{KV17}, 
 Denisov, Sakhanenko and Wachtel \cite{DSW18}, 
Grama, Lauvergnat and Le Page \cite{GLL18Ann}, 
and the references cited in these works. 
In most of these studies, the asymptotic properties of the probabilities in \eqref{Objective-proba001} 
were analyzed for the case where the starting point $x\geq 0$ is fixed.  
However, far fewer results address situations where this condition does not hold. 
For $x$ depending on $n$, two primary scenarios have been examined. 
The first scenario concerns the case when the starting point $x$ 
is such that $\frac{x}{\sqrt{n}}\to 0$ as $n\to\infty$. 
In the second scenario, the starting point $x$ is of the order $\sqrt{n}$, i.e., $x \asymp \sqrt{n}$. 
Both scenarios have been considered in the author's paper \cite{GX-2024-AIHP}, 
where, for each scenario, conditioned central limit theorems for both probabilities in \eqref{Objective-proba001} have been 
stated. We also refer to  Doney \cite{Don12} and to \cite{GX-2024-AIHP} for results in the context of conditioned local limit theorems.  
It is also worth mentioning that Nagaev \cite{Nag69, Nag70} and Aleskevicene \cite{Aleskev73} proved Berry-Esseen type bounds for the probability $\mathbb{P} \left( \tau_x >n\right)$. 
However, these Berry-Esseen bounds become efficient essentially for $x=x_n\to\infty$ as $n\to\infty$, 
thereby covering basically the case when $x \asymp \sqrt{n}$ asymptotically. 

 In this paper, we aim to study the asymptotic behavior of the probabilities in \eqref{Objective-proba001} 
 with the goal of obtaining a non-trivial uniform approximation valid for the starting point $x$ across the entire real line $\bb R$.
Our primary result demonstrates that the uniform asymptotic behaviour of these probabilities 
is subject to  new heat kernel type approximation. 
In addition, we provide a rate of convergence, ensuring a more precise understanding of the approximation's accuracy. 
Our findings cover both scenarios, where  
$x\in \bb R$ can be fixed and where $x$ depends on $n$, tending to infinity at a rate equal to $\sqrt{n}$ or faster. 
As a consequence, we extend and improve the results 
established separately for the two particular scenarios previously analyzed in Theorems 2.9 and 2.10 of \cite{GX-2024-AIHP}. 
Furthermore, the conditioned limit theorems established in this paper will be instrumental in developing a conditioned local limit theorem based on the heat kernel approximation, which will be considered in a subsequent paper.  

Our basic assumption is that the increment $X_1$ has moments of order $2+\delta$; specifically, we assume that   
there exists  a constant $\delta > 0$ such that 
\begin{align} \label{Moment condition 2+delta}
\bb E (|X_1|^{2+\delta})  < \infty.
\end{align} 
The limit behavior of the probabilities in \eqref{Objective-proba001} is tied to a harmonic function $V(x)$ 
associated with the random walk $(x+S_n )_{n\geq 1}$, which we proceed to introduce. 
It is known (see, for example, Theorem 2.1 and Lemma 7.1 in \cite{GLL18Ann})
 that under condition \eqref{Moment condition 2+delta}, the function
\begin{align}\label{Equ-Vx}
x \in \bb R \mapsto V(x) : = -\mathbb E S_{\tau_x} 
\end{align}
is well-defined, non-decreasing and non-negative; 
and it holds that for any $x \in \bb R$, 
\begin{align}\label{Def_V_002bb}
\lim_{n \to \infty } \mathbb{E} \left(x+S_n ; \tau_x  > n \right)  = -\mathbb E S_{\tau_x} = V(x),
\end{align}
where, for a random variable  $X$  and
 an event $B$, we write $\bb E (X; B)$ for the expectation $\bb E (X \mathds 1_{B})$. 
Using the Markov property and \eqref{Def_V_002bb}, one can deduce that, for any $x \in \bb R$,
\begin{align} \label{Doob transf}
\bb E \Big( V(x+S_1); x+S_1\geq 0  \Big) = V(x),   
\end{align} 
which means that the function $V$ is harmonic for the random walk $(x+S_n )_{n\geq 1}$ killed at the stopping time $\tau_x$. 
In particular, since $\bb E(X_1)=0$ and $\sigma^2>0$, 
it follows that $V(0)>0$ (so that $V$ is strictly positive on $\bb R_+$).
Moreover, the support of $V$ is given by (\cite[Example 2.10]{GLL18Ann}) 
\begin{align} \label{support-V}
\supp V = \{ x \in \bb R: \bb P(x + X_1 \geq 0) >0  \}. 
\end{align}

The function $V$ is related to the renewal function $U$  
in the strict descending ladder process of $(S_n)_{n\geq 1}$ as follows:   
for any  $x\geq 0$, it holds $U(x)=V(x)/V(0)$, 
where $V(0) = - \mathbb E S_{\tau_0}$ represents the expectation of the ladder height (see \cite{KV17}). 
Notably, while $U$ retains harmonic properties, it can be verified through renewal arguments 
under much weaker conditions than those required by the scenario studied in this paper. 
For further insights into this relationship, we refer to the works of 
 Tanaka \cite[Lemma 1]{Tanaka1999} and Kersting and Vatutin \cite[Lemma 4.2]{KV17}.

Let us end this section by recalling some more notation to be used all over the paper.
By $c$ we shall denote positive constants and by $c_{\alpha}, c_{\alpha, \beta}$ positive constants depending only on their indices.  
All these constants are supposed to be different at every occurrence.
We denote by $\mathds 1_{B}$ the indicator function of the set $B$. 
The standard normal distribution function is defined by $\Phi(x)=\frac{1}{\sqrt{2\pi}}\int_{-\infty}^x e^{-t^2/2} dt$, for $x\in \bb R$, 
and $\phi(x)=\frac{1}{\sqrt{2\pi}} e^{-x^2/2}$, $x\in\bb R$ stands for the standard normal density function.

We denote by $O(x)$ (respectively $o(x)$) as $x\to\infty$ a nonnegative function of $x$ such that 
$\limsup_{x \to \infty} O(x)/x \leq c < \infty$  (respectively $\lim_{x \to \infty} o(x)/x = 0$).
The notation $a_n \asymp b_n$ 
means that $0<c_1\leq  \liminf_{n \to \infty} a_n / b_n \leq \limsup_{n \to \infty} a_n / b_n  \leq c_2<\infty$. 
The notation $a_n(\alpha) \sim b_n(\alpha)$, uniformly in $\alpha \in A$ as $n\to \infty$, 
 means that $\lim_{n \to \infty} \sup_{\alpha \in A} | a_n(\alpha) / b_n(\alpha) - 1|=0$.

\subsection{Heat kernel asymptotic for the persistence probability}
The objective of this and the subsequent subsection is to derive asymptotic expressions for the persistence probability 
and to establish a central limit theorem for the random walk $(x+S_n )_{n\geq 1}$ 
conditioned to stay positive, with explicit convergence rates depending on the starting point $x$. 
As a direct application we will be able to  establish uniform local limit theorems for random walks conditioned to stay positive, 
which will be considered in a separate paper. 
Besides, our findings have intrinsic interest beyond this application. 
Recent results concerning the conditioned random walks can be found in \cite{Hong-Sun'24} for conditioning via Doob $h$-transforms 
and \cite{Sun 25} for moderate deviations. 
For related applications in the context of branching random walks, we refer to \cite{Blanchet-Zhang tight24, Hou-Ren-Song}.

We now introduce a function related to the Dirichlet heat kernel
 (as described in \eqref{H as int od Dirichlet kern-001} and \eqref{Def-Levydens} below): 
for $x\in \bb R$, 
\begin{align} \label{Heat func-001}
H(x)  = 2 \Phi(x) - 1, 
\end{align}
where $\Phi$ is the standard normal distribution function. 
In particular, for $x\geq 0$, this function has a probabilistic interpretation: $H(x) = \bb P(\tau_x^{B}>1),$ 
where $\tau_x^{B}$ is the exit time of the standard Brownian motion $(B_t)_{t\geq 0}$
from the half-line $\bb R_+$, starting from $x$.

Nagaev \cite{Nag69, Nag70} proved a Berry-Esseen type bound for the persistence probability 
$\mathbb{P} \left( \tau_x >n \right)$: 
under the assumption that $\beta_3=\bb E (|X_1|^3) < \infty$, 
uniformly in $x\geq 0$,
 \begin{align} \label{Nag-bound-001}
\left| \mathbb{P} \left( \tau_x >n \right)  -H\left(\frac{x}{\sigma\sqrt{n}}\right) \right| 
   \leq c (\beta_3)^2 n^{- 1/2  }.  
\end{align}
Aleskevicene \cite{Aleskev73} improved the upper bound in \eqref{Nag-bound-001} to $c \beta_3 n^{- 1/2  }$. 
 Note  that Nagaev's bound \eqref{Nag-bound-001} makes sense only when $x=x_n\to\infty$ as $n\to\infty$. 
For $x$ in compact sets it is not precise, 
since the remainder term $O(n^{-1/2})$ is of the same order as the main term 
$H(\frac{x}{\sigma\sqrt{n}})=2\Phi ( \frac{x}{\sigma \sqrt{n}} ) - 1$.
Actually, for any fixed $x\geq 0$, 
the right asymptotic of the persistence probability $\mathbb{P} \left( \tau_x >n \right)$
is not given any more by \eqref{Nag-bound-001}, but by the following equivalence result: as $n\to \infty$, 
\begin{align} \label{more familiar form 001}
\mathbb{P} \left( \tau_{x}>n\right) \sim \frac{2V(x)}{\sigma \sqrt{2\pi n}}. 
\end{align}
For $x=0$ the asymptotic \eqref{more familiar form 001} is known, for instance, from Feller \cite{Fel64}, Spitzer \cite{Spitzer}, 
Vatutin and Wachtel \cite{VatWacht09},
and for $x\geq 0$ with $\frac{x}{\sqrt{n}}\to 0$  from Doney \cite{Don12}.

We will establish a new asymptotic which unifies  \eqref{Nag-bound-001} and \eqref{more familiar form 001} 
as well as  provide a rate of convergence.  
As a consequence, we enhance the equivalence \eqref{more familiar form 001} by specifying the rate of convergence 
for $x$ near the boundary. We also refine Nagaev's bound \eqref{Nag-bound-001} 
by providing a more accurate estimate for large values of $x$.

\begin{figure}
  \includegraphics[width=10cm]{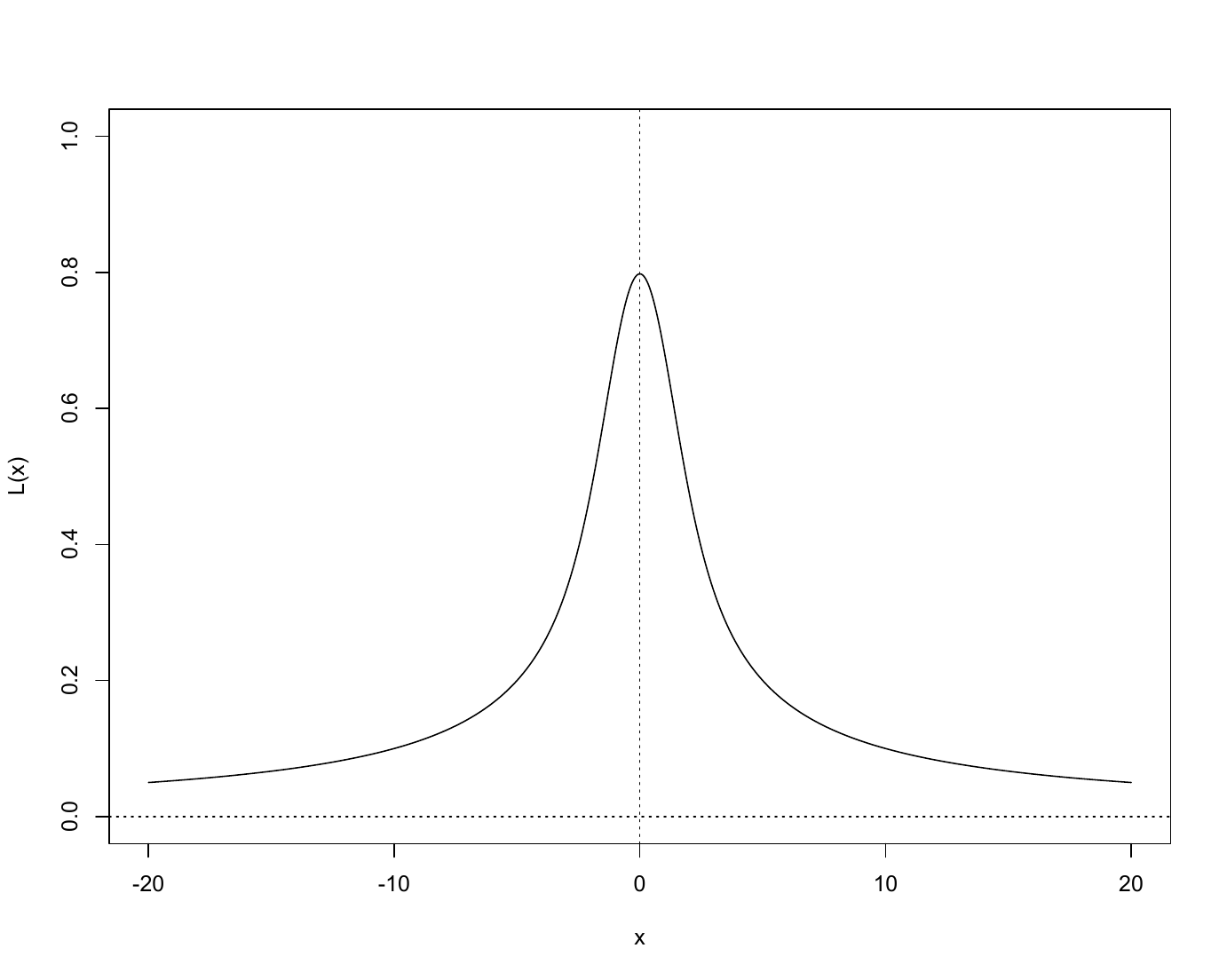}
  \caption{Plot of the function $x\to L(x)$.}
  \label{FigFuncL-001}
\end{figure}
In order to state our results 
we introduce the function:
for $x\in \bb R$,  
\begin{align} \label{def-Lx-001}
L(x)= \frac{H(x)}{x} = \frac{2\Phi(x)-1}{x},
\end{align}
where at $x=0$ we define $L$ by continuity, 
\begin{align} \label{value of L(0)}
L(0) := \lim_{x\to 0} L(x) = \frac{2}{\sqrt{2\pi}}.
\end{align}
Note that, $L$ is positive and even on $\bb R$, i.e.\ $L(x)>0$ and $L(x)=L(-x)$, for any $x\in \bb R$. 
Also, the function $x\mapsto L(x)$ is decreasing on $\bb R_+$ and satisfies, as $|x|\to\infty$, 
\begin{align*} 
L(x) \sim |x|^{-1}.
\end{align*}
The important Lipschitz property of $L$ is established in Lemma \ref{lem-inequality for L} in the next section.
This provides a controlled rate of change in $L$, which is critical for analyzing asymptotic probabilities. 
An alternative representation, connecting $L$ to the heat kernel to be introduced below,
is given by \eqref{func L as integral of heat kern}.
The plot of the function $x\mapsto L(x)$, depicted in Fig. \ref{FigFuncL-001}, illustrates its symmetry and positivity. 

 Our main result regarding the persistence probability $\bb P(\tau_x >n)$ is expressed in relation 
to the harmonic function $V$ and the function $L$. 
This unified framework not only provides an effective approximation of $\bb P(\tau_x >n)$
 but also highlights the interplay between the harmonic and heat kernel representations.

\begin{theorem}\label{Theor-probtauUN-001} 
Assume that $\bb E (X_1) = 0$, $\bb E (X^2_1)= \sigma^2  > 0$  and that there exists $\delta > 0$ 
such that $\bb E (|X_1|^{2+\delta})  < \infty.$ 
Then there exists a constant $c >0$ such that for any  $n \geq 1$ and $x\in \bb R$,   
\begin{align} \label{UniformCondtau-001}
 \left| \mathbb{P} \left( \tau_{x}>n\right) - \frac{V(x)L\left(  \frac{x}{\sigma \sqrt{n}} \right) }{\sigma \sqrt{n}} \right| 
\leq  
\frac{c}{\sqrt{n}} \left(\frac{ 1}{ n^{\frac{\delta}{4} }} 
+ \frac{V(x)L\left(  \frac{x}{\sigma \sqrt{n}} \right) }{n^{\frac{\delta}{4(3+\delta)} } }  \log n\right) := \frac{c}{\sqrt{n}} R_n(\delta,x).
\end{align}
\end{theorem}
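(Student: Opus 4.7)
The plan is a two-scale decomposition via the Markov property. Choose an intermediate time $m = m_n$ of the form $n^{\alpha}$, with $\alpha \in (0,1)$ to be optimized. Conditioning at time $m$ gives
\begin{equation*}
\mathbb{P}(\tau_x > n) = \mathbb{E}\bigl[\,g_{n-m}(x+S_m)\,;\,\tau_x > m\bigr], \qquad g_k(y) := \mathbb{P}(\tau_y > k).
\end{equation*}
On $\{\tau_x > m\}$ we have $x+S_m \ge 0$, so Aleskevicene's refinement of \eqref{Nag-bound-001} gives $g_{n-m}(y) = H(y/\sigma\sqrt{n-m}) + O((n-m)^{-1/2})$. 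Substituting,
\begin{equation*}
\mathbb{P}(\tau_x > n) = \mathbb{E}\!\left[H\!\left(\frac{x+S_m}{\sigma\sqrt{n-m}}\right);\,\tau_x > m\right] + O\!\left(\frac{\mathbb{P}(\tau_x > m)}{\sqrt{n-m}}\right).
\end{equation*}

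The main term is then evaluated via a conditioned Berry--Esseen type CLT for the killed walk, uniform in $x \in \mathbb{R}$, of the schematic form
\begin{equation*}
\mathbb{E}\bigl[F(x+S_m);\,\tau_x > m\bigr] = V(x)\int_{0}^{\infty} F(y)\,\varrho_m(x,y)\,dy + \text{error},
\end{equation*}
where $\varrho_m(x,y)$ is (essentially, a derivative of) the Dirichlet heat kernel for Brownian motion of variance $\sigma^2$ on $\mathbb{R}_+$, and the error is a suitable negative power of $m$ depending on $\delta$. Such a statement is available from \cite{GX-2024-AIHP} (Theorems~2.9 and~2.10) under the $(2+\delta)$ moment hypothesis. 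I would apply it with $F(y) = H(y/\sigma\sqrt{n-m})$, whose Lipschitz modulus is controlled via Lemma~\ref{lem-inequality for L}.

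The resulting Gaussian integral is evaluated in closed form. After the rescaling $y = \sigma\sqrt{m}\,z$ it reduces, via the identity
\begin{equation*}
\int_{0}^{\infty} H(z\theta)\,z\,e^{-z^2/2}\,dz = \frac{\theta}{\sqrt{1+\theta^2}},
\end{equation*}
proved by writing $H(z\theta)=\int_{-z\theta}^{z\theta}\phi(t)\,dt$ and swapping the order of integration, together with its analogue incorporating the $\sinh$-factor of the Dirichlet kernel when $x$ is comparable to $\sqrt n$, to the value $L(x/\sigma\sqrt n)/\sigma\sqrt n$ modulo controllable Lipschitz corrections. This identifies the main term with $V(x)L(x/\sigma\sqrt n)/\sigma\sqrt n$, matching the announced formula.

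Finally, three sources of error must be collected and balanced: (i) the Nagaev error $O(\mathbb{P}(\tau_x > m)/\sqrt{n-m})$ from the first step; (ii) the multiplicative CLT error from the second step, of the form (main term) times a negative power of $m$ governed by $\delta$; and (iii) a Lipschitz-type error for $L$ coming from replacing $\sqrt{n-m}$ by $\sqrt n$ and absorbing the dependence on $m/n$. Optimizing $\alpha$ in $m = n^{\alpha}$ against $\delta$ yields the stated rate $R_n(\delta,x)$. The main obstacle is the conditioned CLT of the second step: it must be uniform in $x \in \mathbb{R}$, since the limiting kernel and the role of $V(x)$ interpolate between a Rayleigh-type density (when $x \ll \sqrt m$) and a genuine Dirichlet heat kernel (when $x \asymp \sqrt m$). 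The unusual exponent $\delta/(4(3+\delta))$ appearing in $R_n(\delta,x)$ emerges precisely from this optimization, while the additive $n^{-\delta/4}$ term handles the regime where $V(x)L(x/\sigma\sqrt n)$ is small.
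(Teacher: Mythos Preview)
Your proposal has two genuine gaps.

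First, Nagaev's bound \eqref{Nag-bound-001} and Aleskevicene's refinement both assume a finite third moment, i.e.\ $\delta\geq 1$. The theorem is stated for arbitrary $\delta>0$, so your very first step (replacing $g_{n-m}(y)$ by $H(y/\sigma\sqrt{n-m})$ with additive error $O((n-m)^{-1/2})$) is unavailable when $\delta\in(0,1)$. The paper circumvents this by using the KMT/Sakhanenko coupling (Lemma~\ref{FCLT}) to obtain a \emph{relative} error bound (Lemma~\ref{Lemma 2-KMTrate}) valid for all $\delta>0$, but only for starting points $x'\geq a\sigma\sqrt{n}$ with $a>0$.

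Second, and more fundamentally, the ``conditioned Berry--Esseen CLT uniform in $x\in\bb R$'' that you invoke in the second step is essentially the content of the companion Theorem~\ref{Theor-probIntUN-002} of this paper. Theorems~2.9 and~2.10 of \cite{GX-2024-AIHP} treat the regimes $x/\sqrt{m}\to 0$ and $x\asymp\sqrt{m}$ separately, with different limiting kernels; stitching them into a single uniform statement with the precise rate you need is exactly the problem the present paper is solving. You acknowledge this as ``the main obstacle'' but do not resolve it.

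The paper's route is different in kind: rather than conditioning at a deterministic time $m=n^\alpha$, it stops at the random time $\nu_n=\inf\{k\geq 1:|x+S_k|>n^{1/2-\ee}\}$. On $\{\tau_x>\nu_n\}$ this guarantees $x+S_{\nu_n}\geq n^{1/2-\ee}$, so the Brownian approximation for $\mathbb P(\tau_{x'}>n-k)$ is always applied at a starting point already of order $\sqrt{n}$, sidestepping the uniformity-near-the-boundary issue entirely. The harmonic function $V(x)$ then enters through the martingale identity $\mathbb E(x+S_{\nu_n};\tau_x>\nu_n,\nu_n\leq N_n)\approx V(x)$ (Lemma~\ref{lemma E1 and E2}), and the exponent $\ee=\frac{\delta}{4(3+\delta)}$ arises from balancing the KMT rate $n^{\ee}r_{n,\delta}$ against $n^{-\ee}\log n$ and $n^{-(1/2-\ee)\delta}$.
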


The proof of Theorem \ref{Theor-probtauUN-001} will be presented in Section \ref{SecProof Theor-probtau-001}.
The asymptotic provided by Theorem \ref{Theor-probtauUN-001} 
is non-trivial for any $x\in \supp V$. 
This is ensured through the following bound, which follows directly 
from the properties of the functions $V$ and $L$ (see Lemma \ref{Lem-properties-of-V-L} below):
for any  $a\in \supp V$, there exist constants $c_1, c_2 >0$ such that, for any $n\geq 1$ and $x\in [a,\infty)$,
\begin{align} \label{lower bound of VL-001}
c_1 \leq V(x)L\left(  \frac{x}{\sigma \sqrt{n}} \right) \leq  c_2 \sqrt{n}.
\end{align}
The bound \eqref{lower bound of VL-001} demonstrates the uniform behavior of $V_{\sigma^2 n}(x)$ 
across different regimes of $x$. 
This property is useful, for example, to deduce the following equivalence.
\begin{corollary}\label{Coroll-probtauUN-001b} 
Under the assumptions of Theorem \ref{Theor-probtauUN-001},
 for any $a\in \supp V$, it holds that, as $n\to\infty$, uniformly over $x\in [a,\infty)$,  
\begin{align} \label{UniformCondtau-001a}
 \mathbb{P} \left( \tau_{x}>n\right) \sim \frac{V(x)L\left(  \frac{x}{\sigma \sqrt{n}} \right) }{\sigma \sqrt{n}}.
\end{align}
In particular, as $V(0)>0$, the equivalence \eqref{UniformCondtau-001a} holds true uniformly for $x\in [0,\infty)$. 
\end{corollary}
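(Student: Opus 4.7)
The plan is to deduce the corollary as a direct consequence of Theorem \ref{Theor-probtauUN-001} combined with the two-sided estimate \eqref{lower bound of VL-001}. The key observation is that the bound in Theorem \ref{Theor-probtauUN-001} is an absolute error bound, whereas the corollary asserts a relative error bound (equivalence). To convert between the two, one divides by the main term $V(x)L(x/(\sigma\sqrt{n}))/(\sigma\sqrt{n})$, and the lower bound on $V(x)L(x/(\sigma\sqrt{n}))$ coming from \eqref{lower bound of VL-001} is exactly what is needed to control the result uniformly in $x$.

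Concretely, I would fix $a\in\supp V$, so that by \eqref{lower bound of VL-001} there exists $c_1=c_1(a)>0$ with $V(x)L(x/(\sigma\sqrt{n}))\geq c_1$ for all $n\geq 1$ and $x\in[a,\infty)$. Dividing both sides of the inequality \eqref{UniformCondtau-001} by $V(x)L(x/(\sigma\sqrt{n}))/(\sigma\sqrt{n})$ and writing $L=L(x/(\sigma\sqrt{n}))$ for brevity, I get
\begin{align*}
\left|\frac{\mathbb{P}(\tau_x>n)}{V(x)L/(\sigma\sqrt{n})}-1\right|
\;\leq\; \frac{c\sigma}{V(x)L\cdot n^{\delta/4}} + \frac{c\sigma\log n}{n^{\delta/(4(3+\delta))}}
\;\leq\; \frac{c\sigma}{c_1\, n^{\delta/4}} + \frac{c\sigma\log n}{n^{\delta/(4(3+\delta))}}.
\end{align*}
The right-hand side no longer depends on $x$ and tends to $0$ as $n\to\infty$, which yields the uniform equivalence over $x\in[a,\infty)$.

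For the final sentence, I would observe that the assumptions $\bb E(X_1)=0$ and $\sigma^2>0$ force $\bb P(X_1\geq 0)>0$ (otherwise $X_1\leq 0$ almost surely and the zero-mean condition would imply $X_1=0$ a.s., contradicting $\sigma^2>0$). By the characterization \eqref{support-V} this means $0\in\supp V$, so the first part applies with $a=0$ and yields uniform equivalence over $x\in[0,\infty)$.

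There is no genuine obstacle here: the corollary is essentially a repackaging of Theorem \ref{Theor-probtauUN-001} using the bound \eqref{lower bound of VL-001}. The only point that requires a brief justification is membership of $0$ in $\supp V$, which is immediate from the zero-mean and non-degeneracy hypotheses via \eqref{support-V}.
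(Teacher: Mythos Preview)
Your proposal is correct and follows exactly the approach the paper indicates: the corollary is stated immediately after the bound \eqref{lower bound of VL-001} with the remark that this bound is ``useful, for example, to deduce the following equivalence,'' and no separate proof is given. Your division of \eqref{UniformCondtau-001} by the main term together with the lower bound $V(x)L(x/(\sigma\sqrt{n}))\geq c_1$ is precisely what the paper has in mind, and your justification that $0\in\supp V$ via \eqref{support-V} is a clean way to handle the final sentence (the paper simply invokes $V(0)>0$, which was noted earlier in Section~\ref{Subsec-Intro}).
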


From \eqref{UniformCondtau-001a} and \eqref{lower bound of VL-001}, 
we obtain the following two-sided bound for the probability $\mathbb{P} \left( \tau_{x}>n\right)$: 
for any $a\in \supp V$, there exist constants $c_1, c_2 >0$ such that, for any $n\geq 1$ and $x\in [a,\infty)$, 
\begin{align} \label{UniformCondtau-001bac}
c_1 \frac{V(x)L\left(  \frac{x}{\sigma \sqrt{n}} \right) }{\sqrt{n}} \leq  \mathbb{P} \left( \tau_{x}>n\right) 
 \leq  c_2 \frac{V(x)L\left(  \frac{x}{\sigma \sqrt{n}} \right) }{\sqrt{n}}. 
\end{align}
The results \eqref{UniformCondtau-001a} and \eqref{UniformCondtau-001bac} are new in the presented ranges. 
A two-sided bound without any constraint on $x$ can be easily deduced from Theorem \ref{Theor-probtauUN-001}. 


When $\delta \geq 1$, 
Theorem \ref{Theor-probtauUN-001} can be slightly improved using Nagaev's bound \eqref{Nag-bound-001}.
\begin{theorem}\label{Theor-probtauUN-001c} 
Assume that $\bb E (X_1) = 0$, $\bb E (X^2_1)= \sigma^2  > 0$  and that there exists $\delta \geq 1$ 
such that $\bb E (|X_1|^{2+\delta})  <  \infty.$ 
Then, there exists a constant $c >0$ such that for any  $n \geq 1$ and $x\in \bb R$,  
\begin{align} \label{Theor-probtauUN-002}
 \left| \mathbb{P} \left( \tau_{x}>n\right) - \frac{V(x)L\left(  \frac{x}{\sigma \sqrt{n}} \right) }{\sigma \sqrt{n}}  \right| 
\leq  \frac{c}{\sqrt{n}} \left(\frac{ 1}{ n^{\frac{\delta}{4} }} + \frac{V(x) L\left(\frac{x}{\sigma\sqrt{n}} \right) }{n^{\frac{1}{4}} }  \log n\right). 
\end{align}
\end{theorem}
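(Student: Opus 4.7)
The plan is to refine the argument underlying Theorem \ref{Theor-probtauUN-001} by inserting Nagaev's Berry--Esseen bound \eqref{Nag-bound-001}, which is at our disposal here because $\delta \geq 1$ entails $\mathbb{E}(|X_1|^3) < \infty$. In the proof of Theorem \ref{Theor-probtauUN-001} one replaces the persistence probability $\mathbb{P}(\tau_y > k)$ on the positive half-line by its heat-kernel surrogate $H(y/(\sigma\sqrt{k}))$, with an error whose rate depends on $\delta$ through a smoothing/Berry--Esseen optimization, yielding the exponent $\delta/(4(3+\delta))$ in the bound. When $\delta \geq 1$, Nagaev's bound provides a $\delta$-independent rate $O(k^{-1/2})$ for exactly this approximation, and the task is to propagate that improvement through the remainder of the argument.

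Concretely I would apply the Markov property at an intermediate deterministic time $m = m(n) \in [1, n-1]$, to be chosen later:
\begin{equation*}
\mathbb{P}(\tau_x > n) = \mathbb{E}\bigl[\mathbb{P}(\tau_{x+S_m} > n-m);\, \tau_x > m\bigr].
\end{equation*}
On $\{\tau_x > m\}$ we have $x+S_m \geq 0$, so Nagaev's bound is applicable to the inner probability and gives
\begin{equation*}
\mathbb{P}(\tau_x > n) = \mathbb{E}\Bigl[H\bigl((x+S_m)/(\sigma\sqrt{n-m})\bigr);\, \tau_x > m\Bigr] + R,
\end{equation*}
with $|R| \leq c\,\mathbb{P}(\tau_x > m)/\sqrt{n-m}$. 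Using the upper bound \eqref{UniformCondtau-001bac} on $\mathbb{P}(\tau_x > m)$ together with the Lipschitz estimate for $L$ (Lemma \ref{lem-inequality for L}) to pass from $L(x/(\sigma\sqrt{m}))$ to $L(x/(\sigma\sqrt{n}))$, one obtains $|R| \leq c\,V(x)L(x/(\sigma\sqrt{n}))/\sqrt{m(n-m)}$, up to logarithmic factors.

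The main term $\mathbb{E}[H((x+S_m)/(\sigma\sqrt{n-m})); \tau_x > m]$ is then handled by the same conditioned-CLT analysis that already lies at the heart of Theorem \ref{Theor-probtauUN-001}: it is approximated, uniformly in $x \in \mathbb{R}$, by $V(x)L(x/(\sigma\sqrt{n}))/(\sigma\sqrt{n})$, with an error inherited from the $(2{+}\delta)$-moment Berry--Esseen bound for $S_m/\sqrt{m}$; this is what produces the first summand $n^{-\delta/4}/\sqrt{n}$ of the claimed bound, exactly as in Theorem \ref{Theor-probtauUN-001}. Choosing $m \asymp \sqrt{n}$ optimally balances the two contributions: the Nagaev-generated remainder $R$ becomes of order $V(x)L(x/(\sigma\sqrt{n}))\,n^{-1/4}\log n /\sqrt{n}$, matching the second summand of \eqref{Theor-probtauUN-002}, while the first summand $n^{-\delta/4}/\sqrt{n}$ is inherited unchanged from the main-term analysis.

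The hard part will be making the main-term step rigorous with the required uniformity in $x$: one must show that, for the chosen $m \asymp \sqrt{n}$, the conditioned expectation approximates $V(x)L(x/(\sigma\sqrt{n}))/(\sigma\sqrt{n})$ with an error no worse than the first summand $n^{-\delta/4}/\sqrt{n}$, without reintroducing the $n^{-\delta/(4(3+\delta))}$ rate that plagued Theorem \ref{Theor-probtauUN-001}. The key ingredients are the two-sided bounds \eqref{lower bound of VL-001}--\eqref{UniformCondtau-001bac}, the Lipschitz property of $L$ from Lemma \ref{lem-inequality for L}, and a classical Berry--Esseen estimate for the partial sums $S_m$. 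Points $x \notin \supp V$ satisfy both $\mathbb{P}(\tau_x > n) = 0$ and $V(x) = 0$, so \eqref{Theor-probtauUN-002} holds trivially there; for $x < 0$ with $x \in \supp V$ one reduces to the case $x \geq 0$ by conditioning on the event $\{x + X_1 \geq 0\}$ and absorbing the corresponding probability into the Markov step above.
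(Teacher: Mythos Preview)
Your approach diverges from the paper's, and the gap you yourself flag in the ``hard part'' is real.

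The paper does not introduce a new deterministic splitting time. Instead it reruns the proof of Theorem~\ref{Theor-probtauUN-001} verbatim---Markov property at the \emph{random} time $\nu_n=\inf\{k\geq 1:|x+S_k|>n^{1/2-\ee}\}$, the decomposition $J_1+J_2$, Lemmas~\ref{Bound for J_4} and~\ref{Lem-bound-UL-J4}---and changes exactly one ingredient: at the step where $\mathbb P(\tau_{x'}>n-k)$ (for $x'\geq n^{1/2-\ee}$) is replaced by $H(x'/(\sigma\sqrt{n-k}))$, it invokes part~2 of Lemma~\ref{Lemma 2-KMTrate} (Nagaev) rather than part~1 (Sakhanenko coupling). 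This sharpens the relative error from $r_{n,\delta}=n^{-\delta/(2(3+\delta))}$ to $r_{n,\delta}=n^{-1/2}$; re-optimizing gives $\ee=1/4$, so $n^{\ee}r_{n,\delta}=n^{-1/4}=n^{-\ee}$, and the exponent $\frac{1}{2}+\frac{\delta}{2}-\delta\ee$ in Lemma~\ref{Bound for J_4} becomes $\frac{1}{2}+\frac{\delta}{4}$. That is the whole proof.

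Your deterministic-time scheme founders precisely at the main-term step. After writing $H(y)=yL(y)$ and using the Lipschitz property of $L$, you are reduced to controlling $V(x)-\mathbb E[(x+S_m);\tau_x>m]$. In the paper this is handled by Lemma~\ref{lemma E1 and E2}, whose proof relies crucially on the fact that $x+S_{\nu_n}\geq n^{1/2-\ee}$ on $\{\tau_x>\nu_n\}$, so that Rogozin's estimate $V(y)/y-1\leq cy^{-\delta}$ yields the uniform rate $n^{-(1/2-\ee)\delta}$. At a deterministic time $m$, $x+S_m$ may sit arbitrarily close to $0$ on $\{\tau_x>m\}$, and none of the ingredients you list---\eqref{lower bound of VL-001}, \eqref{UniformCondtau-001bac}, Lemma~\ref{lem-inequality for L}, or a Berry--Esseen bound for $S_m/\sqrt m$---gives you the needed rate for $V(x)-\mathbb E[(x+S_m);\tau_x>m]$. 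To close this you would end up reinserting a level-crossing stopping time inside the main-term analysis, at which point you have reproduced the paper's argument with an extra, redundant Markov step.
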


Theorems \ref{Theor-probtauUN-001} and \ref{Theor-probtauUN-001c}  can be used to deduce asymptotics in the following cases: 
$x$ close to the origin on the $\sqrt{n}$-scale, and $x$ away from the origin on the $\sqrt{n}$-scale. 
Below we deal only with Theorem \ref{Theor-probtauUN-001},  the results for Theorem \ref{Theor-probtauUN-001c}  being similar.
  
A1. In the case when $\frac{x}{\sqrt{n}}\to 0$, we can recover the previously known result \eqref{more familiar form 001} and
additionally provide a rate of convergence. 
Indeed, using the fact that $|L(u) - L(0)| \leq c |u|$ for any  $u\in \bb R$ (by Lemma \ref{lem-inequality for L}), 
from Theorem \ref{Theor-probtauUN-001} it follows that, for any sequence $\alpha_n\to 0$ as $n\to\infty$, 
uniformly in $n \geq 1$ and $x\in \bb R$ satisfying $\frac{|x|}{\sqrt{n}} \leq \alpha_n $, 
\begin{align} \label{part-form 001}
\Bigg| \mathbb{P} \left( \tau_{x}>n\right) - \frac{2V(x)}{\sigma \sqrt{2\pi n}} \Bigg| 
\leq 
c\frac{V(x)}{\sqrt{2\pi n}} \alpha_n   
+ \frac{c}{\sqrt{n}}\left(\frac{ 1}{ n^{\frac{\delta}{4} }} 
+ \frac{V(x)  }{n^{\frac{\delta}{4(3+\delta)} } }  \log n\right).
\end{align}
In particular, if $a\in \supp V$, from \eqref{part-form 001} 
we get that \eqref{more familiar form 001} holds uniformly in $x\in [a,\alpha_n \sqrt{n}]$. 
Under much more restrictive assumptions,  a higher order expansion of the probability $\mathbb{P} \left( \tau_{x}>n\right)$ for $x\geq 0$ has been recently obtained in \cite{DTW24}.

A2. To analyse the behaviour on the $\sqrt{n}$-scale, we let  $x=t\sigma\sqrt{n}$ for $t \in \bb R$.
Taking into account \eqref{def-Lx-001}, it holds that, for $x\in \bb R$,
\begin{align} \label{main term identity-001}
\frac{V(x) L\left(\frac{x}{\sigma\sqrt{n}}\right) }{\sigma\sqrt{n}} = \frac{V(x)}{x} H\left(\frac{x}{\sigma\sqrt{n}}\right), 
\end{align}
where, for $x =0$, the identity holds true by virtue of \eqref{def-Lx-001} and \eqref{value of L(0)}.
This, together with \eqref{UniformCondtau-001} and Lemma \ref{Lem-bound for V-001}, gives 
\begin{align} \label{rc-004a}
\left| \mathbb{P} \left( \tau_x >n \right)  -H\left(\frac{x}{\sigma\sqrt{n}}\right) \right|
& \leq   c\left(  \left(\frac{V(x)}{x} - 1\right) H(t)
+ n^{-\frac{1}{2} -  \frac{\delta}{4} } + L(t) n^{-\frac{\delta}{4(3+\delta)}}\log n\right) \notag\\
& \leq c\left(  n^{- \delta/2} t^{-\delta} H\left( t \right)
+ n^{-\frac{1}{2} -  \frac{\delta}{4} } + L(t) n^{-\frac{\delta}{4(3+\delta)}}\log n\right). 
\end{align}
In particular,  the result \eqref{rc-004a} complements Nagaev's bound \eqref{Nag-bound-001} for $\delta \in (0, 1)$.

\subsection{A conditioned heat kernel limit theorem}

We now turn to the asymptotic of the joint probability $\mathbb{P} \left(  \frac{ x + S_n }{\sigma\sqrt{n}} \leq  u,  \tau_x >n\right)$.
To state the corresponding result we shall make use of the function
\begin{align} \label{Def-Levydens}
\psi(x,y) = \phi(x-y) -\phi(x+y)
= \frac{1}{\sqrt{2\pi }}   \left( e^{-\frac{(x-y)^2}{2}} - e^{-\frac{(x+y)^2}{2}} \right),  \quad  x, y  \in \bb R,   
\end{align} 
which is called the one-dimensional Dirichlet heat kernel. 
It is easy to see that $\psi$ is symmetric on $\bb R \times \bb R$, i.e. $\psi(x, y) = \psi(y, x)$ for any $x, y \in \bb R$, 
and nonnegative on $\bb R_+ \times \bb R_+$. 
Also the map $y\mapsto \psi(x, y)$ is odd, i.e. $\psi(x,-y) = -\psi(x, y)$ for any $x, y \in \bb R$.
The relation of the heat kernel $\psi$ to the standard Brownian motion 
is given in Section \ref{sec-properties heat kern}. 
In particular, from Lemma \ref{lemma tauBM}, for any $x\in \bb R$, it holds
\begin{align} \label{H as int od Dirichlet kern-001}
H(x) = \int_{0}^{\infty} \psi(x,y) dy.
\end{align}
In addition to the function $\psi$, we need its normalized version which is defined as follows: for any $x\in \bb R$,
\begin{align} \label{def of func h-001}
\ell_H(x,y): = \frac{\psi(x,y)}{H(x)} = \frac{\psi(x,y)}{2\Phi(x)-1},\quad y\in \bb R. 
\end{align}
For $x=0$, by continuity we have, for any $y\in \bb R$, 
\begin{align} \label{approx by Rayleigh density-001} 
\ell_H(0,y) := \lim_{x\to 0} \ell_H(x,y)  = \phi^+(y),
\end{align}
where  
\begin{align} \label{Def-Rayleighdens}
\phi^+(y) := y e^{-y^{2}/2}, \quad y \in \bb R.   
\end{align} 
The restriction of the function $y\mapsto \phi^+(y)$ to $\bb R_+$ is known as the Rayleigh density function.
Note that the restriction of $(x,y)\mapsto \ell_H(x,y)$ on $\bb R \times \bb R_+$ is a Markov kernel, which means that 
for any $x\in \bb R$, the function $y\mapsto \ell_H(x,y)$ is a density function on $\bb R_+$;
in particular, it is positive on $\bb R_+\setminus\{ 0\}$. 
The plot of the function $(x,y) \mapsto \ell_H(x,y)$ is given in Fig. \ref{FigFunc-h-002}. 
\begin{figure}
  \includegraphics[width=10cm]{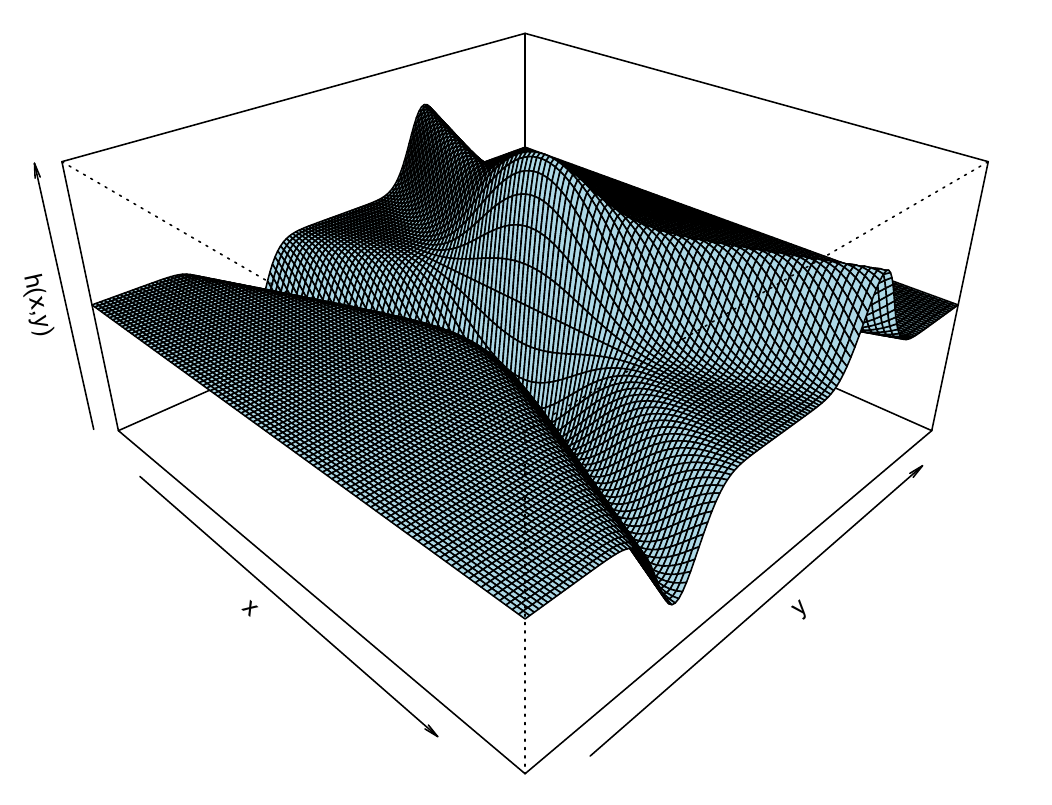}
  \caption{Plot of the function $(x,y)\mapsto \ell_H(x,y)$.}
  \label{FigFunc-h-002}
\end{figure}

Our second main result is the following Berry-Esseen type bound for the random walk $x+S_n$ conditioned to stay 
in the half-line $\bb R_+$.  

\begin{theorem}\label{Theor-probIntUN-002} 
Assume that $\bb E (X_1) = 0$, $\bb E (X^2_1)= \sigma^2  > 0$  and that there exists $\delta > 0$ 
such that $\bb E (|X_1|^{2+\delta})  < \infty.$ 
Then, there exists a constant $c >0$ such that, for any $n \geq 1$, $u\geq 0$ and $x\in \bb R$, 
\begin{align} \label{UniformCondInt-001}
\Bigg| \mathbb{P} \Bigg(  \frac{x+S_n }{ \sigma \sqrt{n}} \leq  u, \tau_x >n\Bigg) 
  -\frac{V(x)L\left(  \frac{x}{\sigma \sqrt{n}} \right) }{\sigma \sqrt{n}}  \int_{0}^{u}  \ell_H \left( \frac{x}{\sigma \sqrt{n}}, y   \right)  dy  \Bigg| 
       \leq c \frac{R_n(\delta,x)}{\sqrt{n}},
\end{align}
where $R_n(\delta,x)$ is defined by \eqref{UniformCondtau-001}. 
\end{theorem}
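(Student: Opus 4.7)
The plan is to mirror the proof of Theorem~\ref{Theor-probtauUN-001} while carrying the extra constraint $(x+S_n)/(\sigma\sqrt{n})\leq u$ through every step. Fix an intermediate scale $m=\lfloor n^{\alpha}\rfloor$ with $\alpha\in(0,1)$ to be optimized. Applying the Markov property at time $m$ gives
\begin{equation*}
\mathbb{P}\!\left(\tfrac{x+S_n}{\sigma\sqrt{n}}\leq u,\,\tau_x>n\right) \;=\; \mathbb{E}\bigl[F_{n,m}(x+S_m);\,\tau_x>m\bigr],
\end{equation*}
with $F_{n,m}(y):=\mathbb{P}((y+S'_{n-m})/(\sigma\sqrt{n})\leq u,\,\tau'_y>n-m)$ for an independent copy $(S'_k)$ of the walk.

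The key estimate is a quantitative conditioned invariance principle for $F_{n,m}(y)$: for $y\geq 0$ and $u\geq 0$,
\begin{equation*}
F_{n,m}(y) \;=\; \int_0^{u\sqrt{n/(n-m)}}\!\psi\!\left(\tfrac{y}{\sigma\sqrt{n-m}},v\right) dv \;+\; r_n(y,u),
\end{equation*}
with a Kolmogorov-type remainder $r_n(y,u)$ of order $(n-m)^{-1/2-\delta/4}$, uniform in $y$ and $u$. This is the $u<\infty$ refinement of the Berry--Esseen input underlying Theorem~\ref{Theor-probtauUN-001}, which corresponds to $u=\infty$ since $\int_0^{\infty}\psi(a,v)\,dv=H(a)$ by~\eqref{H as int od Dirichlet kern-001}. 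The standard proof combines a joint Berry--Esseen bound for $(S_{n-m},\min_{k\leq n-m}S_k)$ with the reflection principle; uniformity in $u$ is inherited for free from the Kolmogorov distance.

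Substituting with $y=x+S_m$ and using the factorization $\psi(a,v)=aL(a)\,\ell_H(a,v)$, the Brownian piece becomes
\begin{equation*}
\tfrac{x+S_m}{\sigma\sqrt{n-m}}\, L\bigl(\tfrac{x+S_m}{\sigma\sqrt{n-m}}\bigr) \int_0^{u\sqrt{n/(n-m)}}\!\ell_H\bigl(\tfrac{x+S_m}{\sigma\sqrt{n-m}},v\bigr) dv.
\end{equation*}
The Lipschitz estimate for $L$ (Lemma~\ref{lem-inequality for L}) together with the analogous smoothness of $a\mapsto\int_0^{w}\ell_H(a,v)\,dv$ in $a$ (which follows from the identity $\int_0^w\psi(a,v)dv=2\Phi(a)-\Phi(a-w)-\Phi(a+w)$) allow one to replace each occurrence of $\tfrac{x+S_m}{\sigma\sqrt{n-m}}$ by the reference value $\tfrac{x}{\sigma\sqrt{n}}$ at a cost controlled by $|S_m|/\sqrt{n}$ and $m/n$. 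Taking expectations on $\{\tau_x>m\}$ and invoking the quantitative form of $\mathbb{E}[x+S_m;\tau_x>m]\to V(x)$ from~\eqref{Def_V_002bb} isolates the main term $\tfrac{V(x)L(x/(\sigma\sqrt{n}))}{\sigma\sqrt n}\int_0^u\ell_H(x/(\sigma\sqrt{n}),v)\,dv$.

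The principal obstacle is balancing the errors against a single choice of $m=\lfloor n^\alpha\rfloor$: the Berry--Esseen remainder from the Brownian approximation shrinks as $\alpha$ decreases, while the Lipschitz-replacement error and the rate in $\mathbb{E}[x+S_m;\tau_x>m]\to V(x)$ both improve as $\alpha$ grows, and all three must assemble into the composite remainder $R_n(\delta,x)$. A secondary technicality is the regime $x\asymp\sqrt n$, where $L(x/(\sigma\sqrt n))$ is small and the bound must carry the multiplicative factor $V(x)L(x/(\sigma\sqrt n))$ rather than a bare $n^{-1/2}$, exactly as in Theorem~\ref{Theor-probtauUN-001}; this is handled by treating the regimes $x\leq\sigma\sqrt n$ and $x>\sigma\sqrt n$ separately and recombining the Taylor errors through the scale-invariance of $L$ and $\ell_H$.
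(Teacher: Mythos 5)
Your plan diverges from the paper's in one crucial respect that is not a stylistic choice but the heart of the difficulty. You cut the walk at a \emph{deterministic} time $m=\lfloor n^{\alpha}\rfloor$ and then claim a uniform Kolmogorov-type bound
\[
\sup_{y\geq 0,\,u\geq 0}\Bigl|F_{n,m}(y)-\int_0^{u\sqrt{n/(n-m)}}\psi\bigl(\tfrac{y}{\sigma\sqrt{n-m}},v\bigr)\,dv\Bigr|\leq C\,(n-m)^{-1/2-\delta/4}.
\]
This bound is provably false: take $u=\infty$ and $y=0$. The right-hand side of the approximation becomes $H(0)=2\Phi(0)-1=0$, whereas the left-hand side is $\mathbb{P}(\tau_0>n-m)\sim 2V(0)/(\sigma\sqrt{2\pi(n-m)})$, which is of exact order $(n-m)^{-1/2}$. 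A Gaussian main term $H(y/\sigma\sqrt{n})$ (or its $\psi$-integral refinement) cannot approximate $\mathbb{P}(\tau_y>n)$ to within $o(n^{-1/2})$ uniformly over $y$, because for bounded $y$ the correct constant in the $n^{-1/2}$ asymptotic is $V(y)$, not $y$, and $V\not\equiv\mathrm{Id}$ in general. This is precisely the gap that Theorem \ref{Theor-probtauUN-001} is designed to close: it inserts the harmonic function $V$ so that the constant is right for all $y$ simultaneously. Any argument that freezes the walk at a deterministic $m$ must cope with the fact that $x+S_m$ can, with non-negligible probability, be of order $1$, and there the heat-kernel approximation with the profile harmonic function $h(y)=y$ is off by an error of the same order as the main term.

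The paper's proof avoids this by applying the Markov property not at a fixed $m$ but at the \emph{random} time $\nu_n=\inf\{k\geq 1:|x+S_k|>n^{1/2-\varepsilon}\}$, truncated at $N_n=[\gamma n^{1-2\varepsilon}\log n]$. On $\{\tau_x>\nu_n\}$ one has $x+S_{\nu_n}\geq n^{1/2-\varepsilon}$, and Rogozin's inequality (Lemma \ref{Lem-bound for V-001}) gives $|V(y)/y-1|\leq c\,y^{-\delta}\leq c\,n^{-(1/2-\varepsilon)\delta}$ in that range, so the replacement of $V$ by $h(y)=y$ costs only a small relative error. The KMT/Sakhanenko coupling (Lemma \ref{FCLT}) then delivers the heat-kernel approximation for $\mathbb{P}(\tfrac{y+S_{n-k}}{\sigma\sqrt n}\leq u,\tau_y>n-k)$ with a \emph{relative} error $O(n^{\varepsilon}(n^{-\delta/(2(3+\delta))}+N_n/n))$, valid uniformly for $y\geq n^{1/2-\varepsilon}$ (Lemma \ref{Lemma-KMTrate-200}), and Lemma \ref{lemma E1 and E2} supplies the needed quantitative form of $\mathbb{E}(x+S_{\nu_n};\tau_x>\nu_n,\nu_n\leq N_n)\to V(x)$. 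The rest of the paper's proof recycles the $J_3$, $J_4$, $G_i$, $F_i$ decompositions from Theorem \ref{Theor-probtauUN-001}, plus the Lipschitz estimate for $a\mapsto\int_D\ell_h(a,y)\,dy$ (Lemma \ref{Holder prop for int ell}), which you correctly anticipate. Your treatment of the two regimes $x\lesssim\sqrt n$ versus $x\gg\sqrt n$ addresses only the \emph{initial} point $x$, not the position $x+S_m$ at the cut time, which is where the trouble actually arises. To repair the argument you would need to replace the deterministic cut $m$ by the exit time $\nu_n$ (or an equivalent device ensuring the walk is at height $\gg 1$ when the Gaussian approximation is invoked) and abandon the unattainable $o(n^{-1/2})$ Berry--Esseen input in favor of a relative-error bound valid only at large heights.
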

For the proof we refer to Section \ref{sec Theor-probIntUN-002}. 
As a direct consequence of Theorem \ref{Theor-probIntUN-002} and of Corollary \ref{Coroll-probtauUN-001b}, we obtain the following generalization of the central limit theorem to conditioned random walks.

\begin{corollary}\label{Coroll-probtauUN-002b}  
Assume conditions of Theorem \ref{Theor-probIntUN-002}. 
For any $a\in \supp V$, it holds 
\begin{align} \label{UniformCondtau-002b}
\lim_{n\to\infty} \sup_{x \geq a} \sup_{u \geq 0} \Bigg|\mathbb{P} \Bigg(  \frac{x+S_n }{ \sigma \sqrt{n}} \leq  u \bigg| \tau_x >n\Bigg)  
-  \int_{0}^{u}  \ell_H \left(\frac{x}{\sigma\sqrt{n}} , y  \right)dy \Bigg| =0. 
\end{align}
In particular, as $V(0)>0$, \eqref{UniformCondtau-002b} holds true with $a = 0$.
\end{corollary}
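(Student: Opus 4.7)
The plan is to apply Bayes' formula to reduce the conditional probability
\[
\mathbb{P} \Bigg(  \frac{x+S_n }{ \sigma \sqrt{n}} \leq  u \,\bigg|\, \tau_x >n\Bigg)
= \frac{\mathbb{P} \bigl(  \frac{x+S_n }{ \sigma \sqrt{n}} \leq  u,\, \tau_x >n\bigr)}{\mathbb{P}(\tau_x > n)}
\]
and then subtract the target limit. Adding and subtracting the heat kernel main term $\frac{V(x)L(x/\sigma\sqrt{n})}{\sigma\sqrt{n}} \int_0^u \ell_H(x/\sigma\sqrt{n},y)dy$ in the numerator, I would split the difference as
\[
\mathbb{P}\Bigl(\tfrac{x+S_n }{ \sigma \sqrt{n}} \leq  u \,\big|\, \tau_x >n\Bigr) - \int_0^u \ell_H\bigl(\tfrac{x}{\sigma\sqrt{n}},y\bigr) dy = \frac{\mathrm{I}_n(x,u)+\mathrm{II}_n(x,u)}{\mathbb{P}(\tau_x>n)},
\]
where $\mathrm{I}_n$ is the error in the joint approximation controlled by Theorem \ref{Theor-probIntUN-002} and $\mathrm{II}_n = \bigl[\tfrac{V(x)L(x/\sigma\sqrt{n})}{\sigma\sqrt{n}} - \mathbb{P}(\tau_x>n)\bigr]\int_0^u \ell_H(x/\sigma\sqrt{n},y)dy$ is controlled by Theorem \ref{Theor-probtauUN-001}. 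Since $y \mapsto \ell_H(x/\sigma\sqrt{n},y)$ is a probability density on $\mathbb{R}_+$, the integral against $\ell_H$ is uniformly bounded by $1$ in $u \geq 0$, hence both $|\mathrm{I}_n|$ and $|\mathrm{II}_n|$ are at most $c R_n(\delta,x)/\sqrt{n}$.

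Next, I would invoke the two-sided bound \eqref{UniformCondtau-001bac}, which gives a uniform lower bound $\mathbb{P}(\tau_x>n) \geq c_1 V(x)L(x/\sigma\sqrt{n})/\sqrt{n}$ valid for all $x \geq a$, and the lower bound \eqref{lower bound of VL-001}, $V(x)L(x/\sigma\sqrt{n}) \geq c_1 > 0$ for $x \in [a,\infty)$. Combining these, I obtain the uniform estimate
\[
\sup_{x\geq a}\sup_{u\geq 0}\Bigg|\mathbb{P} \Bigg(  \frac{x+S_n }{ \sigma \sqrt{n}} \leq  u \,\bigg|\, \tau_x >n\Bigg) -  \int_{0}^{u}  \ell_H \Bigl(\tfrac{x}{\sigma\sqrt{n}} , y  \Bigr)dy \Bigg| \leq c\sup_{x\geq a}\frac{R_n(\delta,x)}{V(x)L(x/\sigma\sqrt{n})}.
\]
Inserting the definition $R_n(\delta,x) = n^{-\delta/4} + V(x)L(x/\sigma\sqrt{n}) n^{-\delta/(4(3+\delta))}\log n$, the right-hand side is dominated by $c n^{-\delta/4} + c n^{-\delta/(4(3+\delta))}\log n$, which tends to $0$ as $n\to\infty$.

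Only one point requires any real care, namely the uniformity in $x \geq a$: this hinges on the fact that $V(x)L(x/\sigma\sqrt{n})$ stays uniformly bounded below on $[a,\infty) \subset \supp V$, which is exactly \eqref{lower bound of VL-001}. All other steps are routine manipulations of the explicit bounds supplied by Theorems \ref{Theor-probtauUN-001} and \ref{Theor-probIntUN-002}. The special case $a=0$ is immediate from $V(0)>0$, which places $0$ inside $\supp V$.
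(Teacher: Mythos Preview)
Your proof is correct and follows essentially the same approach as the paper, which simply states that the corollary is ``a direct consequence of Theorem \ref{Theor-probIntUN-002} and of Corollary \ref{Coroll-probtauUN-001b}.'' You have filled in the routine details correctly: the decomposition $\mathrm{I}_n + \mathrm{II}_n$, the use of the density property of $\ell_H$ to bound the integral by $1$, and the appeal to \eqref{UniformCondtau-001bac} and \eqref{lower bound of VL-001} for the uniform lower bound on the denominator are exactly what is needed.
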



On the $\sqrt{n}$-scale the approximation \eqref{UniformCondtau-002b} can be reformulated as follows: 
uniformly over $u\geq 0$ and $t\in [0,\infty)$, 
\begin{align*} 
\lim_{n\to\infty}\mathbb{P} \Bigg(  t+\frac{S_n }{ \sigma \sqrt{n}} \leq  u \bigg| \tau_{t\sigma\sqrt{n}} >n\Bigg)  
=  \mathscr L_H \left( t,u  \right) :=\int_{0}^{u}  \ell_H(t,y)dy,
\end{align*}
where, for any $t\in [0,\infty)$ fixed, $u\mapsto \mathscr L_H \left( t,u  \right)$ is the probability distribution with density $y\mapsto \ell_H(t,y)$ on $\bb R_+$. 
In particular, for $t=0$, by \eqref{approx by Rayleigh density-001}, we have that, uniformly for $u\geq 0$,
\begin{align*} 
\lim_{n\to\infty}\mathbb{P} \Bigg(  \frac{S_n }{ \sigma \sqrt{n}} \leq  u \bigg| \tau_{0} >n\Bigg)  
= \Phi^+(u):= \int_0^{u}\phi^+(y)dy, 
\end{align*}
where $\Phi^+$ is the Rayleigh distribution function.

As in the case of probability $\mathbb{P} \left( \tau_{x} > n\right)$,
 we shall analyse the scenarios when $x$ is close to the origin 
 and away from the origin on the $\sqrt{n}$-scale.

B1. First we consider the case when $\frac{x}{\sqrt{n}}\to 0$.
Using the approximation of $\ell_H(x,\cdot)$ by the Rayleigh law given in \eqref{approx by Rayleigh density-001}
and Lemma \ref{Holder prop for int ell}, 
from Theorem \ref{Theor-probIntUN-002} we can deduce that, there exists a constant $c>0$ such that, for any sequence
  $\alpha_n\to 0$ as $n\to\infty$, for any $n \geq 1$, $u\geq 0$ and $x\in \bb R$ satisfying $\frac{|x|}{\sqrt{n}} \leq \alpha_n $,
\begin{align} \label{intro200-1}
\Bigg| \mathbb{P} \Bigg(  \frac{x+S_n }{ \sigma \sqrt{n}} \leq  u, \tau_x >n\Bigg) 
    - \frac{2V(x) }{\sigma \sqrt{2\pi n}}     
 \int_{0}^{u}  \phi^+(y)  dy  \Bigg| 
\leq c\frac{V(x)}{\sqrt{ n}} \alpha_n   + c \frac{R_n(\delta,x)}{\sqrt{n}}.
\end{align}
In particular, for any $\ee>0$ and $a\in \supp V$, from \eqref{intro200-1} 
we have as $n\to\infty$, uniformly in $u\geq \ee$ and $x\in [a,\infty)$ satisfying $\frac{|x|}{\sqrt{n}} \leq \alpha_n $,
\begin{align} \label{intro200-2}
\mathbb{P} \Bigg(  \frac{x+S_n }{ \sigma \sqrt{n}} \leq  u, \tau_x >n\Bigg) 
     \sim 
     \frac{2V(x) }{\sigma \sqrt{2\pi n}}  \int_{0}^{u}  \phi^+(y) dy. 
\end{align}

B2. Let  $x=t\sigma\sqrt{n}$, where $t >0$. 
Taking into account \eqref{main term identity-001} 
and Lemma \ref{Lem-bound for V-001}, from \eqref{UniformCondInt-001} we get the following  rate of convergence: 
\begin{align} \label{intro200-3}
& \left|  \mathbb{P} \left(  \frac{x+S_n }{ \sigma \sqrt{n}} \leq  u, \tau_x >n\right) - \int_{0}^{u}  \psi \left( t , y   \right)  dy  \right| \notag\\
&\qquad\qquad \leq   c\left(  n^{- \delta/2} t^{-\delta} H\left( t \right)
+ n^{-\frac{1}{2} -  \frac{\delta}{4} } + L(t) n^{-\frac{\delta}{4(3+\delta)}}\log n\right).
\end{align}

Borovkov \cite[Theorem 6]{Borovk62} 
obtained precise asymptotics of any order of the persistence probability $\mathbb{P} \left( \tau_{x}>n\right)$ and of the large
deviation probability
$\mathbb{P} \big(  \frac{x+S_n }{ \sigma \sqrt{n}} >  u, \tau_x >n\big)$.  
However, the results in \cite{Borovk62} do not apply when $x$ is fixed and are established
under much stronger conditions than ours, namely that $X_1$ has an exponential moment
and that the distribution of $X_1$ has an absolutely continuous component.

We end this section by giving an alternative expression for the main term in Theorem \ref{Theor-probIntUN-002}. 
Consider the function $\ell_h$ defined as follows:  for any $x\in \bb R$,
\begin{align} \label{def of func ell_h-001}
\ell_h(x,y) = \frac{\psi(x,y)}{h(x)}, \quad y\in \bb R, 
\end{align}
where $h(x)=x$ is the so called profile harmonic function. 
For $x=0$ the function $\ell_h$ is defined by continuity, i.e. for any $y\in \bb R$, 
\begin{align} \label{ell_h is approx by Rayleigh density-001} 
\ell_h(0,y) := \lim_{x\to 0} \ell_h(x,y)  = L(0) \phi^+(y). 
 \end{align}
 Moreover, for any $x\in\bb R$,
\begin{align} \label{func L as integral of heat kern}
L(x) =\int_{\bb R_+} \ell_h(x,y)dy.
\end{align}
With these notation, one can rewrite the main term of \eqref{UniformCondInt-001} as
\begin{align*} 
V(x) L\left(\frac{x}{\sigma\sqrt{n}}\right)  \int_{0}^{u}  \ell_H \left( \frac{x}{\sigma \sqrt{n}}, y   \right)  dy
=V(x)  \int_{0}^{u}  \ell_h \left( \frac{x}{\sigma \sqrt{n}}, y   \right)  dy. 
\end{align*}


\section{Preliminary statements}\label{Sec-Auxiliary statements}

\subsection{Properties of the heat kernel} \label{sec-properties heat kern}

Let $\left( B_{t}\right)_{t\geq 0}$ be a standard Brownian motion on the
probability space $\left(\Omega, \mathscr{F}, \mathbb{P} \right).$ 
For any $x \geq 0$, define the exit time 
\begin{align*}
\tau_{x}^{\sigma B} = \inf \left\{ t \geq 0: x + \sigma B_{t} < 0 \right\}. 
\end{align*}
The following well known formulas are due to Levy \cite{Levy37} (Theorem 42.I, pp.194-195).

\begin{lemma}\label{lemma tauBM} 
For any $x \geq 0$, $n\geq 1$ and $0 \leq a < b \leq \infty$, it holds  
\begin{align*}
\bb P \left( x +  \sigma B_{n} \in \left[a, b\right],  \tau_{x}^{\sigma B}>n \right) 
=  \frac{1}{\sigma \sqrt{ n} } \int_{a}^{b}  \psi \left( \frac{x}{\sigma \sqrt{n}},\frac{y}{\sigma \sqrt{n}} \right) dy. 
\end{align*}
In particular, by taking $a = 0$ and $b = \infty$, for any $x \geq 0$ and $n\geq 1$, 
\begin{align*}
\bb P \left( \tau_{x}^{\sigma B} > n \right) 
= 2 \Phi \left( \frac{x}{\sigma\sqrt{n}} \right)-1 
= \frac{2}{\sigma \sqrt{2\pi n} } \int_{0}^x  e^{ - \frac{s^{2}}{2 \sigma^2 n} } ds.   
\end{align*}
\end{lemma}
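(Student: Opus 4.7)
The statement is the classical Lévy formula, so the plan is to derive it from the reflection principle together with a Brownian scaling. All computations reduce to the transition density of the Brownian motion killed upon exiting $[0,\infty)$, which I will read off directly from the reflection principle and then rescale.

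First I would reduce the problem to standard Brownian motion. Set $W_t = x + \sigma B_t$, a Brownian motion started at $x$ with diffusion coefficient $\sigma$, and note that $\{\tau_x^{\sigma B} > n\} = \{\min_{0 \le t \le n} W_t \ge 0\}$. By Brownian scaling, $(W_t)_{0\le t\le n}$ has the same law as $(x + \sigma\sqrt{n}\,\widetilde B_{t/n})_{0\le t\le n}$ for a standard BM $\widetilde B$, so after the change of variable $y = \sigma\sqrt{n}\,\tilde y$ the whole problem is reduced to computing
\[
\mathbb P\!\left(\widetilde B_1 \in d\tilde y,\ \min_{0\le s\le 1}\widetilde B_s \ge -\tilde x\right),
\qquad \tilde x := \frac{x}{\sigma\sqrt{n}},\ \tilde y := \frac{y}{\sigma\sqrt{n}} - \tilde x.
\]

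Next I would apply the reflection principle to obtain this joint density. A standard argument, reflecting every path that touches the level $-\tilde x$ below that level, yields
\[
\mathbb P\!\left(\widetilde B_1 \in d\tilde y,\ \min_{0\le s\le 1}\widetilde B_s \ge -\tilde x\right)
= \bigl(\phi(\tilde y) - \phi(\tilde y + 2\tilde x)\bigr)\,d\tilde y
\]
for $\tilde y \ge -\tilde x$. Rewriting this in the original coordinate $\tilde y' = \tilde y + \tilde x \ge 0$ (the scaled value of $W_n$) gives $\phi(\tilde y' - \tilde x) - \phi(\tilde y' + \tilde x) = \psi(\tilde x, \tilde y')$. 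Undoing the Brownian scaling contributes the Jacobian factor $1/(\sigma\sqrt{n})$, and integrating over $y \in [a,b]$ delivers the main formula
\[
\mathbb P\bigl( x + \sigma B_n \in [a,b],\ \tau_x^{\sigma B} > n\bigr)
= \frac{1}{\sigma\sqrt n}\int_a^b \psi\!\left(\frac{x}{\sigma\sqrt n},\frac{y}{\sigma\sqrt n}\right) dy.
\]

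For the special case $(a,b)=(0,\infty)$ I would just evaluate $\int_0^\infty \psi(\tilde x,u)\,du$ by splitting the two exponentials and applying the substitutions $v = u-\tilde x$ and $v = u+\tilde x$, obtaining $\Phi(\tilde x) - (1-\Phi(\tilde x)) = 2\Phi(\tilde x) - 1$; rewriting $2\Phi(\tilde x)-1$ as the integral $\frac{2}{\sqrt{2\pi}}\int_0^{\tilde x} e^{-t^2/2}\,dt$ and changing variable to $s = \sigma\sqrt n\,t$ yields the stated integral form.

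The work is entirely routine, since the lemma is classical; the only place requiring some care is bookkeeping of the two scalings (the diffusion coefficient $\sigma$ and the time horizon $n$) so that the density $\psi$ appears with arguments $x/(\sigma\sqrt n)$ and $y/(\sigma\sqrt n)$ and the correct Jacobian factor $1/(\sigma\sqrt n)$ is produced. I would therefore keep the variable changes explicit throughout to avoid accidental $\sigma$ or $\sqrt n$ factors.
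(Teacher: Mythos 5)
The paper does not prove Lemma \ref{lemma tauBM}: it simply states the formulas and attributes them to L\'evy \cite{Levy37} (Theorem~42.I), so there is no in-text proof to compare against. Your derivation via Brownian scaling plus the reflection principle is the standard argument and is correct: the reduction $\tilde x = x/(\sigma\sqrt n)$, $\tilde y' = y/(\sigma\sqrt n)$ with Jacobian $1/(\sigma\sqrt n)$, the reflection at level $-\tilde x$ giving the density $\phi(\tilde y'-\tilde x)-\phi(\tilde y'+\tilde x)=\psi(\tilde x,\tilde y')$, and the evaluation $\int_0^\infty\psi(\tilde x,u)\,du = \Phi(\tilde x)-(1-\Phi(\tilde x))=2\Phi(\tilde x)-1$ are all in order.
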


When $\sigma=1$ and $n=1$, we have, for $x \geq 0$, 
\begin{align} \label{def-Hx-001}
H(x) = \bb P \left( \tau_{x}^{B} > 1 \right) 
= 2 \Phi \left( x \right)-1
= \frac{2}{\sqrt{2\pi} } \int_{0}^x  e^{ - \frac{s^{2}}{2} } ds. 
\end{align}

\begin{lemma} \label{lem-inequality for H}
For any $x\geq 0$ and $\ee\geq0$, we have 
\begin{align} \label{bound for H-001}
H(x(1+\ee)) \leq H(x) (1+\ee)
\end{align}
and 
\begin{align} \label{bound for H-002}
H(x(1-\ee)) \geq H(x) (1-\ee).
\end{align}
\end{lemma}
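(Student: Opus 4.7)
The plan is to make a multiplicative change of variable in the integral representation \eqref{def-Hx-001} of $H$, namely $H(x)=\frac{2}{\sqrt{2\pi}}\int_0^x e^{-t^2/2}dt$ for $x\geq 0$. Setting $t=(1\pm\ee)u$ brings the integral back over $[0,x]$ at the cost of pulling a factor $(1\pm\ee)$ outside and rescaling the Gaussian exponent by $(1\pm\ee)^2$. Both inequalities then reduce to a single sign check on the resulting integrand.

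Carrying this out for the first inequality, I would obtain
\[
(1+\ee)H(x)-H(x(1+\ee))=\frac{2(1+\ee)}{\sqrt{2\pi}}\int_0^x\bigl[e^{-u^2/2}-e^{-(1+\ee)^2u^2/2}\bigr]du,
\]
and the integrand is nonnegative since $(1+\ee)^2\geq 1$ and $s\mapsto e^{-s}$ is decreasing on $[0,\infty)$. For the second inequality the same calculation yields
\[
H(x(1-\ee))-(1-\ee)H(x)=\frac{2(1-\ee)}{\sqrt{2\pi}}\int_0^x\bigl[e^{-(1-\ee)^2u^2/2}-e^{-u^2/2}\bigr]du,
\]
which is nonnegative in the implicit range $\ee\in[0,1]$ (where $x(1-\ee)\geq 0$): there $(1-\ee)^2\leq 1$ makes the integrand nonnegative, while the prefactor $(1-\ee)$ is nonnegative as well.

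No serious obstacle arises: the whole argument is one rescaling followed by monotonicity of the Gaussian density on $[0,\infty)$. An equivalent, more conceptual route is to observe that the function $L(x)=H(x)/x$ from \eqref{def-Lx-001} is nonincreasing on $(0,\infty)$; this is transparent because $L(x)$ is the average of the decreasing function $s\mapsto\frac{2}{\sqrt{2\pi}}e^{-s^2/2}$ over $[0,x]$, and averaging a decreasing function over a lengthening interval can only decrease the average. Multiplying the resulting chain $L(x(1+\ee))\leq L(x)\leq L(x(1-\ee))$ by $x(1+\ee)$ and by $x(1-\ee)$ respectively then delivers both inequalities of the lemma at once.
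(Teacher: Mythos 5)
Your proof is correct, and it takes a different route from the paper. The paper's argument is a one-liner: since $H(0)=0$, inequality \eqref{bound for H-001} follows from concavity of $H$ on $\bb R_+$ (because $H'(x)=\tfrac{2}{\sqrt{2\pi}}e^{-x^2/2}$ is decreasing there), and \eqref{bound for H-002} from the equivalent convexity of $H$ on $\bb R_-$. Your multiplicative substitution makes the same monotonicity visible directly at the level of the Gaussian density and requires no convexity language. The alternative you sketch — $L(x)=H(x)/x$ is the average of the decreasing function $s\mapsto\tfrac{2}{\sqrt{2\pi}}e^{-s^2/2}$ over $[0,x]$, hence nonincreasing — is in essence the ``star-shaped'' consequence of concavity at $0$ that the paper is invoking, and is arguably the cleanest formulation of the two.

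One observation worth recording: you restrict \eqref{bound for H-002} to $\ee\in[0,1]$ (so that $x(1-\ee)\geq0$), and this restriction is in fact necessary. As literally stated for all $\ee\geq0$, the inequality fails on $\ee\in(1,2)$: take $x=1$, $\ee=3/2$, then $H(-1/2)=-H(1/2)\approx-0.383$ while $(1-\ee)H(1)\approx-0.341$, so $H(x(1-\ee))<H(x)(1-\ee)$. Your own sign computation explains why: for $1<\ee<2$ the prefactor $1-\ee$ is negative while $(1-\ee)^2<1$ keeps the bracketed integrand positive. The paper's concavity/convexity argument likewise only covers $\ee\in[0,1]$ (and, by oddness of $H$, also $\ee\geq2$). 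This does not affect the paper, since Lemma~\ref{lem-inequality for H} is only ever applied with $\ee$ of order $N_n/n\to0$, but your implicit restriction is the right reading of the statement rather than a gap in your argument.
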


\begin{proof} 
Since $H(0) = 0$, 
the first inequality \eqref{bound for H-001} 
is a consequence of the concavity of the map $x\mapsto H(x)$ for $x\in \bb R_+$. 
Similarly, the second inequality \eqref{bound for H-002} 
follows from the convexity of the map $x\mapsto H(x)$ for $x\in \bb R_-$.  
\end{proof}

We now establish the important Lipschitz property of the function $L$,
which is crucial in the proof of Theorem \ref{Theor-probtauUN-001}.
 
\begin{lemma} \label{lem-inequality for L}
There exists a constant $c>0$ such that for any $x, x' \in \bb R$, 
\begin{align*} 
\left| \frac{L(x')}{L(x)} -1 \right| \leq c | x' - x |.
\end{align*}
 \end{lemma}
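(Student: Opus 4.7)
The plan is to reduce this multiplicative Lipschitz estimate to an ordinary additive Lipschitz property for the reciprocal $1/L$, and then to establish the latter by controlling the derivative of $1/L$.

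First, using the algebraic identity $\tfrac{1}{L(x)}-\tfrac{1}{L(x')}=\tfrac{L(x')-L(x)}{L(x)\,L(x')}$, one rewrites
\[
\frac{L(x')}{L(x)}-1 \;=\; L(x')\left(\frac{1}{L(x)}-\frac{1}{L(x')}\right).
\]
Next I would observe that $L$ attains its maximum at the origin: since $\phi$ is decreasing on $\bb R_+$, integration yields $2x\phi(x)\leq H(x)$ for $x\geq 0$, hence $L'(x)=(2x\phi(x)-H(x))/x^{2}\leq 0$, and together with the evenness of $L$ this gives $L(x')\leq L(0)=2/\sqrt{2\pi}$ for every $x'\in\bb R$. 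Therefore the desired bound reduces to proving that $1/L$ is Lipschitz on $\bb R$; by evenness of $1/L$ and $\big||x|-|x'|\big|\leq |x-x'|$, this reduces further to the Lipschitz property on $[0,\infty)$.

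The core step is then to verify that $(1/L)'$ is bounded on $[0,\infty)$. Since $H(x)=2\Phi(x)-1$ has a simple zero at the origin with $H'(0)=2/\sqrt{2\pi}>0$, the function $L(x)=H(x)/x$ extends to a smooth, strictly positive function on $\bb R$, and hence so does $1/L$. For $x>0$ a direct computation gives
\[
\Big(\frac{1}{L}\Big)'(x)=\frac{H(x)-2x\phi(x)}{H(x)^{2}}.
\]
A Taylor expansion at $0$ yields $H(x)-2x\phi(x)=\tfrac{2x^{3}}{3\sqrt{2\pi}}+O(x^{5})$ and $H(x)^{2}\sim \tfrac{2x^{2}}{\pi}$, whence $(1/L)'(x)=O(x)\to 0$ as $x\to 0^{+}$; at the other end, Gaussian tail bounds give $H(x)\to 1$ and $2x\phi(x)\to 0$ exponentially, so $(1/L)'(x)\to 1$ as $x\to +\infty$. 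Combined with continuity on $(0,\infty)$, these endpoint limits produce a uniform bound $|(1/L)'(x)|\leq c_{1}$ on $[0,\infty)$; integrating this bound yields the Lipschitz property of $1/L$, and substituting back into the displayed identity proves the lemma with $c=L(0)\,c_{1}$. The most delicate point is the $0/0$ behavior at $x=0$, handled through the cubic cancellation between the Taylor expansions of $H(x)$ and $2x\phi(x)$; everywhere else the estimates are routine consequences of continuity and Mills-type tail bounds.
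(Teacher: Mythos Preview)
Your proof is correct and takes a genuinely different route from the paper. The paper sets $R(x,a)=\frac{L(x+a)-L(x)}{aL(x)}$ and shows $\sup_{x,a}|R(x,a)|<\infty$ by a five-case analysis according to the signs and sizes of $x$ and $a$, bounding in each case the ratio $\frac{H(x+a)-H(x)}{aL(x)}$ via direct estimates on the integrals $\int e^{-s^{2}/2}\,ds$. Your argument is considerably shorter: the identity $\frac{L(x')}{L(x)}-1=L(x')\bigl(\tfrac{1}{L(x)}-\tfrac{1}{L(x')}\bigr)$ together with the uniform bound $L\leq L(0)$ reduces the problem to the ordinary Lipschitz property of $1/L$, which you then obtain from boundedness of $(1/L)'$ via the Taylor expansion at the origin and Gaussian tails at infinity. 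What your approach buys is a one-page proof with a single analytic point (the $0/0$ cancellation at the origin), whereas the paper's proof is more elementary in that it avoids any smoothness discussion of $L$ at $0$, trading this for a longer but self-contained case-by-case verification.
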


\begin{proof}
For any $x, x' \in \bb R$, denote $a= x' - x$ and 
\begin{align*} 
R(x,a) : = \frac{L(x+a)-L(x)}{aL(x)}.
\end{align*}
An elementary analysis which is performed below shows that $\sup_{x,a\in \bb R}|R(x,a)| \leq c$, for some positive constant $c > 0$. 
Therefore, the result follows.

Since the function $L$ is even, we have $R(x, a) = - R(-x, -a)$ for any $x, a \in \bb R$. 
Therefore, it suffices to show that $|R(x,a)| \leq c$ in the following five cases:

Case (i): $x \geq 0$ and $a \geq 0$. 
By the definition of $H$ and $L$ (cf.\ \eqref{def-Hx-001} and \eqref{def-Lx-001}), we have 
\begin{align*}
I(x, a) 
:= \frac{H(x+a)-H(x)}{a L(x)}
=  \frac{ \frac{1}{a} \int_{x}^{x+a}  e^{ - \frac{s^{2}}{2} } ds }{ \frac{1}{x} \int_{0}^{x}  e^{ - \frac{s^{2}}{2} } ds }. 
\end{align*}
Since the function $s \mapsto e^{ - \frac{s^{2}}{2} }$ is decreasing on $\bb R_+$, 
for any $x \geq 0$ and $a \geq 0$,  
we have $e^{ - \frac{(x+a)^2}{2} } \leq \frac{1}{a} \int_{x}^{x+a}  e^{ - \frac{s^{2}}{2} } ds \leq e^{ - \frac{x^{2}}{2} }$
and $e^{ - \frac{x^{2}}{2} } \leq \frac{1}{x} \int_{0}^{x}  e^{ - \frac{s^{2}}{2} } ds \leq 1$,  
so that $e^{ - \frac{(x+a)^2}{2} } \leq  I(x, a) \leq 1$. 
Hence, for any $x \geq 0$ and $a \geq 0$,  
\begin{align*}
\frac{1}{x+a} \left( e^{ - \frac{(x+a)^2}{2} } - 1  \right) \leq R(x,a) 
= \frac{1}{x+a} \left( I(x, a)  - 1 \right) \leq 0. 
\end{align*}
Let $f(t) = \frac{1}{t} ( e^{ - \frac{t^2}{2} } - 1 )$, $t \geq 0$. 
For $t \geq 1$, 
we have $f(t) \geq -\frac{1}{t} \geq -1$. 
For $t \in [0, 1)$, using the inequality $e^{ - \frac{t^2}{2} } \geq 1 - \frac{t^2}{2}$, 
we get $f(t) \geq - \frac{t}{2} \geq - \frac{1}{2}$. 
Therefore, $\frac{1}{x+a} ( e^{ - \frac{(x+a)^2}{2} } - 1 ) \geq -1$ for all $x, a \geq 0$
and we have proved that $\sup_{x, a \geq 0} |R(x,a)| \leq 1$.

Case (ii): $2 \geq x \geq 0$ and $-2 \leq a < 0$. 
By the mean value theorem, there exists $\tilde x \in (x+a, x)$ such that 
\begin{align*} 
L(x+a) - L(x) = a L'(\tilde x)  = a \frac{L'(\tilde x)}{L(\tilde x)} L(\tilde x). 
\end{align*}
Since $L(x) = \frac{H(x)}{x}$, we have 
\begin{align*}
\frac{L'(x)}{L(x)} = - \frac{1}{x} + \frac{H'(x)}{H(x)} =  - \frac{1}{x} + \frac{ e^{- \frac{x^2}{2}} }{ \int_0^x  e^{- \frac{s^2}{2}} ds }.
\end{align*}
By L'H\^opital's rule, it holds that $\lim_{x \to 0}  \frac{L'(x)}{L(x)} = 0$. 
Hence the function $x \mapsto \frac{L'(x)}{L(x)}$ is continuous on $\bb R$ 
and bounded on $[-2, 2]$, 
so $| \frac{L'(\tilde x)}{L(\tilde x)} | \leq c$ for some constant $c$. 
As the function $L$ is continuous and strictly positive on $[-2, 2]$, 
there exists a constant $c>0$ such that $L(\tilde x) \leq c L(x)$ for all $x, \tilde x \in [-2, 2]$.
Therefore, there exists a constant $c'>0$ such that $|L(x+a) - L(x)| \leq c' |a| L(x)$ for all 
$2 \geq x \geq 0$ and $-2 \leq a < 0$, as desired.

Case (iii): $x \geq 1$, $a <0$ and $x + a \geq 1$.  
Since the function $s \mapsto e^{ - \frac{s^{2}}{2} }$ is decreasing on $\bb R_+$, we have 
$\frac{1}{x} \int_{0}^{x}  e^{ - \frac{s^{2}}{2} } ds \leq \frac{1}{x+a} \int_0^{x+a} e^{ - \frac{s^{2}}{2} } ds$. 
Hence, 
\begin{align}\label{upper-I-x-a}
I(x, a) 
= \frac{ \frac{1}{a} \int_{x}^{x+a}  e^{ - \frac{s^{2}}{2} } ds }{ \frac{1}{x} \int_{0}^{x}  e^{ - \frac{s^{2}}{2} } ds }
= \frac{ \frac{1}{-a} \int_{x+a}^{x}  e^{ - \frac{s^{2}}{2} } ds }{ \frac{1}{x} \int_{0}^{x}  e^{ - \frac{s^{2}}{2} } ds }
\leq 1, 
\end{align}
so that $R(x, a) = \frac{1}{x+a} \left( I(x, a)  - 1 \right) \leq 0$. 
For the lower bound, 
since $\frac{1}{-a} \int_{x+a}^{x}  e^{ - \frac{s^{2}}{2} } ds \geq e^{ - \frac{x^2}{2} }$
and $\frac{1}{x} \int_{0}^{x}  e^{ - \frac{s^{2}}{2} } ds \leq 1$, 
we have $I(x, a) \geq e^{ - \frac{x^2}{2} }$. 
Taking into account that $x + a \geq 1$, we get 
\begin{align*}
R(x,a) \geq  \frac{1}{x+a} \left( e^{ - \frac{x^2}{2} } - 1  \right) \geq - \frac{1}{x + a} \geq -1. 
\end{align*}
Therefore, $|R(x, a)| \leq 1$ for all $x \geq 1$ and $a <0$ with $x + a \geq 1$.

Case (iv): $x \geq 1$, $a <0$ and $x+a \in [0, 1]$. 
As in \eqref{upper-I-x-a}, we have $R(x, a) \leq 0$ in this case, so it remains to give a lower bound for $R(x, a)$. 
Since $\frac{1}{-a} \int_{x+a}^{x}  e^{ - \frac{s^{2}}{2} } ds \geq e^{ - \frac{x^2}{2} }$
and $\frac{1}{x} \int_{0}^{x}  e^{ - \frac{s^{2}}{2} } ds \leq 1$, 
we get
\begin{align*}
R(x,a) =
 \frac{1}{x+a} \left( \frac{ \frac{1}{-a} \int_{x+a}^x e^{- \frac{s^2}{2}} ds }{ \frac{1}{x} \int_0^x e^{- \frac{s^2}{2}} ds } - 1\right)
\geq  \frac{1}{x+a} \left( e^{- \frac{x^2}{2}} - 1 \right). 
\end{align*}
Now we further consider two cases: $x+2a >0$ and $x+2a \leq 0$. 
If $x+2a >0$, we have $\frac{1}{x+a} \leq \frac{2}{x}$.
Since in case (i) we have shown that 
$f(t) = \frac{1}{t} ( e^{ - \frac{t^2}{2} } - 1 ) \geq -1$ for all $t \in \bb R_+$, we get 
\begin{align*}
R(x,a) \geq  \frac{1}{x+a} \left( e^{- \frac{x^2}{2}} - 1 \right) \geq \frac{2}{x} \left( e^{- \frac{x^2}{2}} - 1 \right) \geq -2. 
\end{align*}
If $x + 2a \leq 0$, we have $\frac{x}{a} \geq -2$. 
Since $a <0$ and $x+a \geq 0$, it holds that $\frac{1}{x} \leq \frac{1}{-a}$ and hence 
\begin{align*}
\frac{1}{-a} \int_{x+a}^x e^{- \frac{s^2}{2}} ds - \frac{1}{x} \int_0^x e^{- \frac{s^2}{2}} ds
\geq  \frac{1}{-a} \left(  \int_{x+a}^x e^{- \frac{s^2}{2}} ds -  \int_0^x e^{- \frac{s^2}{2}} ds \right)
= \frac{1}{a} \int_0^{x+a} e^{- \frac{s^2}{2}} ds. 
\end{align*}
Note that $\frac{1}{x+a} \int_0^{x+a} e^{- \frac{s^2}{2}} ds \leq 1$, 
and $\int_0^x e^{- \frac{s^2}{2}} ds \geq \int_0^1 e^{- \frac{s^2}{2}} ds \geq e^{-1/2}$
since $x \geq 1$. 
Therefore, taking into account that $\frac{x}{a} \geq -2$, we obtain 
\begin{align*}
R(x,a) = \frac{1}{x+a}  \frac{ \frac{1}{-a} \int_{x+a}^x e^{- \frac{s^2}{2}} ds - \frac{1}{x} \int_0^x e^{- \frac{s^2}{2}} ds
}{ \frac{1}{x} \int_0^x e^{- \frac{s^2}{2}} ds }
& \geq \frac{1}{x+a}  \frac{ \frac{1}{a} \int_0^{x+a} e^{- \frac{s^2}{2}} ds }{ \frac{1}{x} \int_0^x e^{- \frac{s^2}{2}} ds } \notag\\
& = \frac{x}{a}  \frac{\frac{1}{x+a} \int_0^{x+a} e^{- \frac{s^2}{2}} ds}{\int_0^x e^{- \frac{s^2}{2}} ds}
\geq -2 e^{-1/2}. 
\end{align*}
Combining the above estimates, we conclude that $|R(x, a)| \leq 2$ for all $x \geq 1$ and $a <0$ with $x+a \in [0, 1]$.

Case (v): $1 \geq x > 0$ and $a <-2$. 
In this case, we have $e^{- \frac{(x+a)^2}{2} } \leq \frac{1}{-a} \int_{x+a}^{x}  e^{ - \frac{s^{2}}{2} } ds \leq 1$
and $e^{ - \frac{x^{2}}{2} } \leq \frac{1}{x} \int_{0}^{x}  e^{ - \frac{s^{2}}{2} } ds \leq 1$, so that 
\begin{align*}
I(x,a) = \frac{ \frac{1}{-a} \int_{x+a}^{x}  e^{ - \frac{s^{2}}{2} } ds }{ \frac{1}{x} \int_{0}^{x}  e^{ - \frac{s^{2}}{2} } ds } - 1 
\in \left[ e^{- \frac{(x+a)^2}{2} } - 1,  e^{\frac{x^2}{2}} -1  \right]. 
\end{align*}
Since $x +a <0$, we get
\begin{align*}
R(x,a) = \frac{1}{x+a} \left(
\frac{ \frac{1}{-a} \int_{x+a}^{x}  e^{ - \frac{s^{2}}{2} } ds }{ \frac{1}{x} \int_{0}^{x}  e^{ - \frac{s^{2}}{2} } ds }
- 1 \right)
\in \left[ \frac{1}{x+a} \left( e^{\frac{x^2}{2}} -1 \right),  \frac{1}{x+a} \left( e^{- \frac{(x+a)^2}{2} } - 1 \right) \right]. 
\end{align*}
As $1 \geq x > 0$ and $x+a < -1$, we have $\frac{1}{x+a} > -1$ and $e^{\frac{x^2}{2}} -1 \geq e^{1/2} - 1 > 1$.
Hence $\frac{1}{x+a} ( e^{\frac{x^2}{2}} -1 ) > -1$. 
Since we have shown in case (i) that $\frac{1}{t} ( e^{ - \frac{t^2}{2} } - 1 ) \geq -1$ for all $t \geq 0$,
we get $\frac{1}{x+a} ( e^{- \frac{(x+a)^2}{2} } - 1 ) \leq 1$. 
Therefore,  $|R(x, a)| \leq 1$ for all $1 \geq x > 0$ and $a <-2$. 
\end{proof}

\begin{lemma}\label{Lem-properties-of-V-L}
For any  $a\in \supp V$, there exist constants $c_1, c_2 >0$ such that, for any $n\geq 1$ and $x\in [a,\infty)$,
\begin{align} \label{lower bound of VL-002}
c_1 \leq V(x)L\left(  \frac{x}{\sigma \sqrt{n}} \right) \leq  c_2 \sqrt{n}.
\end{align}
\end{lemma}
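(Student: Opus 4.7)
The plan is to prove both bounds by splitting at the scale $x = \sigma\sqrt{n}$, exploiting the non-decreasing character of $V$ together with its linear asymptotic, and the monotonicity and decay of $L$. The ingredients I will draw on from the material already established in the paper are the following. First, $V$ is non-decreasing with $V(a) > 0$ since $a \in \supp V$, so $V(x) \geq V(a) > 0$ for all $x \geq a$. Second, the linear control on $V$ supplied by Lemma \ref{Lem-bound for V-001}, namely $V(x) \leq C(1+x^+)$ globally on $\bb R$ together with the asymptotic $V(x)/x \to 1$ as $x \to \infty$ (the latter is implicit in the computation leading to \eqref{rc-004a}). Third, elementary estimates on $L$: since $H\in[0,1]$, one has $L(y) \leq \min(L(0), 1/|y|)$ for $y \neq 0$, and $L(y) \geq H(1)/y$ for $y \geq 1$; moreover $L$ is continuous, positive, even, and decreasing on $\bb R_+$, so that $L(y) \geq L(1)$ for $|y|\leq 1$.

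For the upper bound I would argue as follows. When $|x| \leq \sigma\sqrt{n}$, use $L(x/(\sigma\sqrt{n})) \leq L(0)$ and $V(x) \leq C(1+\sigma\sqrt{n})$, whence the product is bounded by a constant multiple of $\sqrt{n}$ (for $n\geq 1$). When $x > \sigma\sqrt{n}$, use $L(x/(\sigma\sqrt{n})) \leq \sigma\sqrt{n}/x$ and $V(x) \leq C(1+x) \leq 2Cx$ for $x\geq 1$, so the product is $\leq 2C\sigma\sqrt{n}$. The remaining corner cases, namely $x\leq 0$ or $\sigma\sqrt{n} < x < 1$, occur only when $\sigma\sqrt{n}$ is comparable to $1$, and both $V(x)$ and $L(x/(\sigma\sqrt{n}))$ are then bounded by absolute constants.

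For the lower bound, on $|x|/(\sigma\sqrt{n}) \leq 1$ the monotonicity and evenness of $L$ give $L(x/(\sigma\sqrt{n})) \geq L(1) > 0$, and combined with $V(x) \geq V(a) > 0$ this yields the positive constant $V(a)L(1)$. On $x > \sigma\sqrt{n}$ I would use $L(x/(\sigma\sqrt{n})) \geq H(1)\sigma\sqrt{n}/x$, and choose $X_0$ large enough that $V(x) \geq x/2$ for $x \geq X_0$ (using $V(x)/x \to 1$). For $x \geq X_0$ the product is then at least $H(1)\sigma\sqrt{n}/2 \geq H(1)\sigma/2$; the residual compact strip $\sigma\sqrt{n} < x \leq X_0$ is non-empty only for finitely many $n$, and on it continuity and positivity of $V$ and $L$ provide a uniform positive lower bound. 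The symmetric possibility $x < -\sigma\sqrt{n}$, which requires $|a|$ to exceed $\sigma\sqrt{n}$ and hence occurs for only finitely many $n$, is handled identically by a compact-set argument.

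The proof is essentially bookkeeping and carries no real obstacle. The only conceptual point worth flagging is that the split must be at the $n$-dependent scale $\sigma\sqrt{n}$ rather than an absolute scale: the desired upper bound must grow like $\sqrt{n}$ while the lower bound must remain a fixed positive constant, and it is the interplay between the linear growth of $V$ at infinity and the $1/|y|$ decay of $L$ at infinity that makes the product behave in exactly this way.
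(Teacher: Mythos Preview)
Your proposal is correct and follows essentially the same approach as the paper: both arguments split at the $n$-dependent scale $x = \sigma\sqrt{n}$, using $V(x)\geq V(a)>0$ with $L\geq L(1)$ on the inner region and the identity $V(x)L(x/(\sigma\sqrt{n})) = (V(x)/x)\,\sigma\sqrt{n}\,H(x/(\sigma\sqrt{n}))$ together with $V(x)/x\to 1$ on the outer region. Your treatment is in fact a bit more careful than the paper's about the corner cases (negative $a$, the strip $\sigma\sqrt{n}<x\leq X_0$), but the structure and ingredients are the same.
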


\begin{proof}
For $x \in [a, \sigma \sqrt{n}]$, we have $V(x) \geq V(a) >0$ and $L(x) \geq \min \{ L(\frac{a}{\sigma \sqrt{n}}), L(1)\} \geq c'_1>0$ for any $n \geq 1$.
For $x > \sigma \sqrt{n}$, by \eqref{def-Lx-001} we have 
$
V(x) L(  \frac{x}{\sigma \sqrt{n}} ) = \frac{V(x)}{x} \sigma \sqrt{n} H(\frac{x}{\sigma \sqrt{n}})
\geq c \sigma \sqrt{n} H(1) > c''_1 >0,
$ 
for any $n \geq 1.$ 
Therefore the lower bound in \eqref{lower bound of VL-002} holds with $c_1=\max\{c'_1, c''_2 \}$.
For the upper bound, when $x\in [a,\sigma\sqrt{n})$, since $V(x)\leq c'_2\sqrt{n}$, 
we have $V(x) L(  \frac{x}{\sigma \sqrt{n}} )\leq c'_2 \sqrt{n } L(1)$. 
When $x\geq \sigma\sqrt{n}$,
since $\frac{V(x)}{x}\sim 1$, we have 
$V(x) L(  \frac{x}{\sigma \sqrt{n}} ) = \frac{V(x)}{x} \sigma \sqrt{n} H(\frac{x}{\sigma \sqrt{n}}) \leq c''_2 \sqrt{n}$.
Thus, the upper bound in \eqref{lower bound of VL-002} follows. 
\end{proof}

The following lemma will play a key role in the proof of Theorem \ref{Theor-probIntUN-002}.

\begin{lemma} \label{Holder prop for int ell} 
There exists a constant $c>0 $ such that, for any $x, x' \in \bb R$ with $|x-x'|\leq 1$, 
and any measurable set $D\subseteq \bb R$,
 \begin{align*} 
\left| \int_D \ell_h(x', y) dy  -  \int_D \ell_h(x, y) dy \right|  \leq c L(x) |x-x'|. 
\end{align*}
\end{lemma}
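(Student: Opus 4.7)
My plan is to apply the fundamental theorem of calculus in $t$ between $x$ and $x'$ and reduce the task to a uniform $L^1$-type bound on $\partial_t \ell_h(t,\cdot)$. Writing
\[
\int_D \ell_h(x',y)\,dy - \int_D \ell_h(x,y)\,dy = \int_x^{x'}\int_D \partial_t\ell_h(t,y)\,dy\,dt,
\]
which is legitimate since $t\mapsto\ell_h(t,y)$ is smooth and will admit a locally integrable dominant, it suffices to establish the key estimate $J(t):=\int_{\bb R}|\partial_t\ell_h(t,y)|\,dy \leq c L(t)$ for all $t\in\bb R$. Indeed, the Lipschitz property of $L$ (Lemma~\ref{lem-inequality for L}) gives $L(t)\leq (1+c_0)L(x)$ for every $t$ between $x$ and $x'$ when $|x-x'|\leq 1$, so integrating over $[x,x']$ yields the announced $cL(x)|x-x'|$ bound. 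By the odd symmetry $\ell_h(t,-y)=-\ell_h(t,y)$, it suffices in turn to estimate $\int_{\bb R_+}|\partial_t\ell_h(t,y)|\,dy$.

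I would split according to the size of $|t|$. For $|t|\geq 1$, the decomposition $\ell_h(t,y)=\psi(t,y)/t$ gives $\partial_t\ell_h = \psi_t/t - \psi/t^2$, and a direct computation shows $\psi_t(t,y)=(y-t)\phi(t-y)+(y+t)\phi(t+y)$. Two Gaussian substitutions ($u=y\mp t$) yield $\int_{\bb R_+}|\psi_t(t,y)|\,dy \leq \sqrt{2/\pi}+\phi(t)$, while $\int_{\bb R_+}\psi(t,y)\,dy = H(t)\leq 1$ because $\psi\geq 0$ on $\bb R_+\times\bb R_+$. Therefore $J(t)\leq c/|t|$, which matches $L(t)=H(|t|)/|t|\geq H(1)/|t|$ for $|t|\geq 1$.

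The main obstacle is the regime $|t|\leq 1$, where the representation $\psi_t/t-\psi/t^2$ has a spurious singularity at $t=0$ and one must exploit the cancellation between the two terms. Fix $y$ and set $g_y(t):=\phi(t-y)-\phi(t+y)$, so that $\partial_t\ell_h(t,y)=[t g_y'(t)-g_y(t)]/t^2$. Since $g_y$ is odd in $t$ with $g_y(0)=0$, and $(tg_y'(t)-g_y(t))'=tg_y''(t)$, the numerator admits the representation
\[
t g_y'(t)-g_y(t)=\int_0^t s\, g_y''(s)\,ds,
\]
which absorbs the $t^{-2}$ denominator and yields the uniform pointwise bound $|\partial_t\ell_h(t,y)|\leq \tfrac12\sup_{s\in[0,|t|]}|g_y''(s)|$. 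For $|t|\leq 1$ this is dominated by $M(y):=\sup_{s\in[0,1]}\bigl[|\phi''(s-y)|+|\phi''(s+y)|\bigr]$, and the super-polynomial decay of $\phi''(u)=(u^2-1)\phi(u)$ makes $\int_{\bb R_+}M(y)\,dy$ a finite constant. Since $L(t)\geq L(1)>0$ on $[-1,1]$, this gives $J(t)\leq cL(t)$ also in the small-$t$ regime (and $M$ serves as the integrable dominant needed to differentiate under the integral sign). Joining the two regimes and using the Lipschitz comparison $L(t)\asymp L(x)$ as above completes the argument.
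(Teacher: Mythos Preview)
Your argument is correct and slightly more streamlined than the paper's. The paper does not integrate the $t$-derivative over $[x,x']$; instead it writes $\ell_h=L\cdot\ell_H$, bounds the $L$-factor difference via the Lipschitz lemma for $L$, and for the remaining term applies the mean value theorem to $\ell_H(x+a,y)-\ell_H(x,y)$, then estimates $\partial_x\ell_H(x,y)$ separately for $|x|>1$ and $|x|\leq 1$. In the delicate small-$|x|$ regime the paper unwinds the cancellation by Taylor-expanding $\phi(y\pm x)$ to second order and $H(x)$ to first order, with several intermediate $\theta_i$'s, before integrating in $y$. Your identity $t g_y'(t)-g_y(t)=\int_0^t s\,g_y''(s)\,ds$ extracts exactly the same cancellation in one line and gives the pointwise dominant $\tfrac12\sup_{|s|\leq 1}|g_y''(s)|$ directly; this is a genuinely cleaner route to the small-$|t|$ bound. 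One cosmetic point: in your large-$|t|$ paragraph the assertions $\int_{\bb R_+}\psi(t,y)\,dy=H(t)$ and $\int_{\bb R_+}|(t+y)\phi(t+y)|\,dy=\phi(t)$ are written for $t>0$; since $\ell_h(-t,y)=\psi(-t,y)/(-t)=\ell_h(t,y)$, the function $J$ is even in $t$, so it suffices to treat $t\geq 1$ and this closes the case $t\leq -1$ without further work.
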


\begin{proof}
By the definition of $\ell_H$ and $\ell_h$ (cf.\ \eqref{def of func h-001} and \eqref{def of func ell_h-001}), we have $\ell_h(x,y) = L(x)\ell_H(x,y)$, 
which implies that, for any $x \in \bb R$ and $a \in [-1, 1]$, 
\begin{align} \label{bound integral ell-001}
& \int_D \left(\ell_h(x+a, y) -\ell_h(x, y) \right) dy \notag\\ 
& = L(x+a) \int_D  \ell_H(x+a, y)  dy - L(x) \int_D  \ell_H(x, y) dy  \notag\\
& = L(x+a)  \int_D  \left( \ell_H(x+a, y) - \ell_H(x, y) \right)  dy
 + \left( L(x+a) - L(x) \right) \int_D  \ell_H(x, y) dy\notag\\
&=: J_1 +J_2. 
\end{align}
For the first term,
there exists $\theta \in (0,1)$ such that 
\begin{align}\label{taylor for h-001}
\ell_H(x+a, y) - \ell_H(x, y) = a  \frac{d}{dx'} \ell_H(x', y) \Big|_{x' = x + \theta a}. 
\end{align}
By the definition of $\ell_H$ (cf.\ \eqref{def of func h-001} and \eqref{Def-Levydens}), we have 
\begin{align} \label{derivative of h-001}
\frac{d}{dx} \ell_H(x, y) 
 =  \frac{1}{H(x)^2} \Big[ H(x) \Big(  \phi'(x-y) - \phi'(x+y) \Big) 
- 2 \phi(x) \Big( \phi(x-y) - \phi(x+y) \Big) \Big].
\end{align}

First assume that $|x|>1$. Then, from \eqref{derivative of h-001}, it follows that
\begin{align*} 
\Big|\frac{d}{dx} \ell_H(x, y) \Big| 
& \leq  \frac{1}{|H(x)|} \Big(  |\phi'(x-y)| + |\phi'(x+y)|  \Big)  +  \frac{2\phi(x)}{H(x)^2} \Big(  |\phi(y+x)| +  |\phi(y-x)|  \Big) \notag \\ 
& \leq  c \Big(  |\phi'(x-y)| + |\phi'(x+y)|  +  |\phi(y+x)| +  |\phi(y-x)|  \Big). 
\end{align*}
Denote $g_0 (t) = \sup_{|z|\leq 1} \phi(t + z) $ and $g_1 (t) = \sup_{|z|\leq 1} \phi'(t + z) $
and remark that $\int_{\bb R} g_k(t)dt <\infty $ for $k=0,1.$
Then there exists $c>0$ such that for any $y \in \bb R$ and $a \in [-1, 1]$, 
\begin{align*} 
\Big|\frac{d}{dx'} \ell_H(x', y) \Big|_{x' = x + \theta a} \Big| 
& \leq   c \Big(  g_1(x-y) + g_1(x+y)  +  g_0(y+x)| +  g_0(y-x)  \Big). 
\end{align*}
Substituting this into \eqref{taylor for h-001} and using Lemma \ref{lem-inequality for L}, we get 
\begin{align*} 
|J_1|  & \leq c a L(x+a)  \int_D \Big(  g_1(x-y) + g_1(x+y)  +  g_0(y+x)| +  g_0(y-x)  \Big)  dy \\
&\leq c a (1+a)L(x)  \left(\int_0^{\infty}  g_1(y) dy + \int_0^{\infty} g_0(y)dy \right) \\
&\leq c a L(x).
\end{align*}

Secondly, assume that $|x|\leq 1$.
We rewrite \eqref{derivative of h-001} as follows:
\begin{align} \label{derivative of h-006}
\frac{d}{dx} \ell_H(x, y) 
& = \frac{1}{H(x)^2} \Big[ \left( x H(x) + 2 \phi(x) \right) \left(  \phi(y+x) -  \phi(y-x)  \right)  \notag\\
& \qquad\qquad\qquad + y  H(x) \left(  \phi(y+x) +  \phi(y-x)  \right)  \Big]. 
\end{align}

By Taylor's expansion, there exist $\theta_1, \theta_2, \theta_3 \in (0,1)$ such that 
\begin{align*}
\phi(y+x) & = \phi(y) + x \phi'(y) + \frac{x^2}{2} \phi''(y+\theta_1 x) = \phi(y) - xy \phi(y) + \frac{x^2}{2} \phi''(y+\theta_1 x) , \notag\\
\phi(y-x) & = \phi(y) - x \phi'(y) + \frac{x^2}{2} \phi''(y+\theta_2 x)= \phi(y) + xy \phi(y) + \frac{x^2}{2} \phi''(y+\theta_2 x)
\end{align*}
and
\begin{align*} 
H(x) & =  2 x \phi(x) +  x^2 \phi'(\theta_3 x). 
\end{align*}
Hence, 
\begin{align*}
\frac{d}{dx} \ell_H(x, y) 
& =  \frac{1}{H(x)^2} \Bigg[ \left( x H(x) + 2 \phi(x) \right)
 \left(  -2 xy \phi(y) + \frac{x^2}{2} \phi''(y+\theta_1 x) - \frac{x^2}{2} \phi''(y+\theta_2 x)  \right) 
 \notag\\
& \quad + y  \left( 2 x \phi(x) +  x^2 \phi'(\theta_3 x)\right) \left( 2 \phi(y) 
  +  \frac{x^2}{2} \phi''(y+\theta_1 x) + \frac{x^2}{2} \phi''(y+\theta_2 x) \right) \Bigg] \\
& =  \frac{1}{H(x)^2} \Bigg[ \left( x H(x) + 2 \phi(x) \right)
 \left(  \frac{x^2}{2} \phi''(y+\theta_1 x) - \frac{x^2}{2} \phi''(y+\theta_2 x)  \right) 
 \notag\\
& \quad + y  \left( 2 x \phi(x) +  x^2 \phi'(\theta_3 x)\right) 
\left(  \frac{x^2}{2} \phi''(y+\theta_1 x) + \frac{x^2}{2} \phi''(y+\theta_2 x) \right) \Bigg].
\end{align*}
Denote $g_2 (t) = \sup_{|z|\leq 1} \phi''(t + z)$ so that $\int_{\bb R} g_2(t)dt <\infty $. 
Then, taking into account that $xL(x)=H(x)$, we obtain 
\begin{align*} 
\left| \frac{d}{dx} \ell_H(x, y) \right| 
& \leq  \frac{1}{H(x)^2} \Bigg[ \left( x^2 L(x) + 2 \phi(x) \right) x^2 g_2(y) + 2 x^2 |y| \phi(y) |H(x)|  
 \notag\\
& \quad + |y|  \left( 2 |x| \phi(x) +  x^2 g_1(0)  \right)  x^2 g_2(y)  + 2 |y| x^2 \phi(y) g_1(0) \Bigg] \notag \\
&=   \frac{x^2}{H(x)^2} \left( x^2 L(x) + 2 \phi(x) \right)  g_2(y) + 2  |y| \phi(y) |H(x)| \frac{x^2}{H(x)^2} 
 \notag\\
& \quad +  \left( 2 |x| \phi(x) +  x^2 g_1(0)  \right) \frac{x^2}{H(x)^2} |y|   g_2(y)  + 2 |y| \phi(y) g_1(0) \notag \\
&\leq   c \Big(  g_2(y) +  |y| \phi(y) + |y| g_2(y) \Big).
\end{align*}
Using \eqref{taylor for h-001} and the last bound, we have
\begin{align*} 
\int _{D}  \left|  \ell_H(x+a, y) - \ell_H(x, y)\right| dy \leq   c |a|  \int _{\bb R} \left[  g_2(y) +  |y| \phi(y) + |y| g_2(y) \right] dy
\leq c |a|. 
\end{align*}
Therefore, for the first term in the right-hand side of \eqref{bound integral ell-001} we have 
\begin{align*} 
|J_1| \leq c |a|  L(x+a) \leq c |a| L(x) (1+|a|) \leq c |a| L(x). 
\end{align*}
For the second term in in the right-hand side of \eqref{bound integral ell-001}, by Lemma \ref{lem-inequality for L}, 
\begin{align*} 
|J_2| \leq c |a| L(x) \int_{\bb R}  | \ell_H(x, y)| dy \leq 2 c |a| L(x). 
\end{align*}
This concludes the proof of the lemma.
\end{proof}

\subsection{Technical results}

The purpose of this subsection is to state several auxiliary results 
that will be used to establish the conditioned integral limit theorems 
(Theorems \ref{Theor-probtauUN-001} and \ref{Theor-probIntUN-002}). 
Throughout this section, unless otherwise specified, we assume the following conditions: 
$\bb E (X_1) = 0$, $\bb E (X^2_1)= \sigma^2  > 0$ and there exists a constant $\delta > 0$ 
such that $\bb E (|X_1|^{2+\delta})  < \infty.$

The following functional central limit theorem, proved by Sakhanenko \cite{Sak06} (see also \cite{KMT-2}), 
provides a way to couple out the random walk with the Brownian motion.

\begin{lemma}\label{FCLT}
There exists a construction of the random walk $(S_n)_{n\geq 0}$ on the initial probability space
together with a continuous time Brownian motion $(B_t)_{t\geq 0}$ such that for any $s>0$ and $n\geq 1$,
\begin{align*}
\bb{P} \left( \sup_{0\leq t\leq 1} \left| S_{[nt]} - \sigma B_{nt} \right| > \sigma n^{1/2 - s}  \right)
 \leq  \frac{c_{\delta,s}}{ n^{ \frac{\delta}{2} - (2+\delta) s} }, 
\end{align*}
where $c_{\delta,s}$ is a constant depending on $\delta$ and $s$. 
\end{lemma}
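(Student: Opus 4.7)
The statement is a classical strong approximation result, so the plan is simply to deduce it from the Sakhanenko coupling inequality. Under the moment assumption $\bb E (|X_1|^{2+\delta}) < \infty$, Sakhanenko's theorem provides a construction of $(S_n)$ together with a Brownian motion $(B_t)$ on the same probability space such that, for every $x > 0$ and $n \geq 1$,
\begin{align*}
\bb P \left( \max_{1 \leq k \leq n} | S_k - \sigma B_k | > x \right) \leq c_\delta \, \frac{n \, \bb E (|X_1|^{2+\delta})}{x^{2+\delta}}.
\end{align*}
Taking this inequality as the input and choosing $x = \tfrac{1}{2} \sigma n^{1/2 - s}$ immediately gives
\begin{align*}
\bb P \left( \max_{1 \leq k \leq n} | S_k - \sigma B_k | > \tfrac{1}{2} \sigma n^{1/2 - s} \right) \leq \frac{c'_{\delta,s}}{n^{\delta/2 - (2+\delta)s}},
\end{align*}
which is already the stated rate, but only at the integer instants.

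The remaining task is to upgrade the discrete maximum to the supremum over $t \in [0,1]$. Writing
\begin{align*}
S_{[nt]} - \sigma B_{nt} = \left( S_{[nt]} - \sigma B_{[nt]} \right) - \sigma \left( B_{nt} - B_{[nt]} \right),
\end{align*}
one obtains
\begin{align*}
\sup_{0 \leq t \leq 1} | S_{[nt]} - \sigma B_{nt} | \leq \max_{0 \leq k \leq n} | S_k - \sigma B_k | + \sigma \max_{0 \leq k \leq n-1} \sup_{0 \leq u \leq 1} |B_{k+u} - B_k|.
\end{align*}
The first summand is handled by the display above. For the second, a union bound together with the standard Gaussian tail $\bb P \bigl( \sup_{0 \leq u \leq 1} |B_u| > y \bigr) \leq 4 e^{-y^2/2}$ yields
\begin{align*}
\bb P \left( \max_{0 \leq k \leq n-1} \sup_{0 \leq u \leq 1} |B_{k+u} - B_k| > \tfrac{1}{2} \sigma n^{1/2 - s} \right) \leq 4 n \exp \left( - c \, n^{1 - 2s} \right),
\end{align*}
which decays faster than any polynomial in $n$ for $s < 1/2$ (and the lemma is only informative when $s < \delta/(2(2+\delta)) < 1/2$). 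This contribution is therefore absorbed into the bound on the first term.

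The main obstacle is of course the Sakhanenko inequality itself, whose proof rests on a delicate KMT-type dyadic coupling; but since we content ourselves with citing it from \cite{Sak06}, the only genuine work in this lemma is the elementary passage from the discrete skeleton to the continuous-time supremum, for which the Brownian modulus-of-continuity estimate above suffices. No new ideas beyond the classical strong approximation theory are needed.
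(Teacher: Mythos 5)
The paper does not supply a proof of this lemma at all: it simply states that the result is ``proved by Sakhanenko \cite{Sak06} (see also \cite{KMT-2})'' and uses it as a black box. Your proposal therefore cannot be compared to a proof in the paper; what it provides is a sketch of how the stated form follows from the usual discrete-time version of Sakhanenko's coupling, and on that score it is correct. The arithmetic checks out: with $x=\tfrac12\sigma n^{1/2-s}$ the Markov-type inequality $\bb P(\max_{k\leq n}|S_k-\sigma B_k|>x)\leq c_\delta\, n\,\bb E|X_1|^{2+\delta}/x^{2+\delta}$ gives exponent $(1/2-s)(2+\delta)-1=\delta/2-(2+\delta)s$; the decomposition $S_{[nt]}-\sigma B_{nt}=(S_{[nt]}-\sigma B_{[nt]})-\sigma(B_{nt}-B_{[nt]})$ together with the union bound and the Gaussian tail $\bb P(\sup_{0\leq u\leq 1}|B_u|>y)\leq 4e^{-y^2/2}$ controls the fractional-time increment by a term $4n\,e^{-cn^{1-2s}}$ that is negligible for $s<1/2$; and for $s\geq \delta/(2(2+\delta))$ the claimed bound is trivially $\geq 1$ for $n$ large, so the restriction to $s<1/2$ in the Brownian estimate costs nothing.

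Two small caveats worth flagging. First, the inequality you take as input is a corollary of Sakhanenko's theorem rather than its literal statement: \cite{Sak06} is formulated in terms of truncated power moments (and controls $\bb E\max_{k\leq n}|S_k-\sigma B_k|^p$ rather than the tail directly), so your display should be understood as the standard specialization to i.i.d.\ summands with a finite $(2+\delta)$-moment followed by Markov's inequality. Second, depending on the exact formulation one reads in \cite{Sak06} or \cite{KMT-2}, the continuous-time coupling may already be built in, in which case the discrete-to-continuous upgrade is unnecessary; but including it is harmless and makes the reduction self-contained. In short, your proposal is a correct filling-in of a step the paper deliberately leaves to the cited references.
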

Note that the assertion of the lemma becomes effective when $s \in (0,  \frac{\delta}{2(2+\delta)})$.

The following lemma is elementary. 

\begin{lemma}\label{Lem-Burkholder}
For any $r \in (0, 2 + \delta]$, there exists a constant $c>0$ such that for any $n \geq 1$, 
\begin{align*}
\bb E \left( \max_{1 \leq k \leq n} |S_k|^r \right) \leq c n^{r/2}. 
\end{align*}
\end{lemma}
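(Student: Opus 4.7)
The plan is to split the exponent range $r\in(0,2+\delta]$ into the two subranges $(0,2]$ and $(2,2+\delta]$, and in each case combine Doob's maximal inequality for the martingale $(S_k)_{k\ge 0}$ with a standard moment bound on $\bb E|S_n|^r$.

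First, for $r\in(0,2]$, I apply Doob's $L^2$ maximal inequality together with orthogonality of increments to obtain
\begin{align*}
\bb E \max_{1\leq k\leq n} |S_k|^2 \;\leq\; 4\,\bb E |S_n|^2 \;=\; 4 n \sigma^2.
\end{align*}
Since $r/2\in(0,1]$, Jensen's inequality applied to the concave map $t\mapsto t^{r/2}$ then yields
\begin{align*}
\bb E \max_{1\leq k\leq n}|S_k|^r
= \bb E \left( \max_{1\leq k\leq n} |S_k|^2 \right)^{r/2}
\leq \left(4 n \sigma^2\right)^{r/2}
\leq c\, n^{r/2}.
\end{align*}

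Second, for $r\in(2,2+\delta]$, Doob's $L^r$ maximal inequality (applicable because $r>1$) gives
\begin{align*}
\bb E \max_{1\leq k\leq n}|S_k|^r \;\leq\; \left(\frac{r}{r-1}\right)^r \bb E|S_n|^r,
\end{align*}
so it remains to prove $\bb E|S_n|^r \leq c\, n^{r/2}$. This is a classical consequence of the Marcinkiewicz--Zygmund inequality: since $r/2\geq 1$, convexity of $t\mapsto t^{r/2}$ (equivalently, Jensen's inequality applied to the arithmetic mean of $X_i^2$) implies
\begin{align*}
\bb E\Big(\textstyle\sum_{i=1}^n X_i^2\Big)^{r/2} \;\leq\; n^{r/2-1}\,\textstyle\sum_{i=1}^n \bb E|X_i|^r \;=\; n^{r/2}\,\bb E|X_1|^r,
\end{align*}
and the moment $\bb E|X_1|^r$ is finite since $r\leq 2+\delta$. (Alternatively, one may invoke Rosenthal's inequality directly, which for $r\geq 2$ yields $\bb E|S_n|^r\leq C_r(n\,\bb E|X_1|^r + n^{r/2}\sigma^r)\leq c\,n^{r/2}$, using $n\leq n^{r/2}$.) Combining these bounds with the Doob estimate completes the argument.

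There is no genuine obstacle here, as the lemma reduces to two classical maximal inequalities; the only slight subtlety is that Doob's $L^r$ inequality is unavailable for $r\leq 1$, which is why the first case must be routed through $L^2$ and the concavity of $t\mapsto t^{r/2}$. The assumption $\bb E|X_1|^{2+\delta}<\infty$ is used precisely to guarantee finiteness of the $r$-th moment in the second case.
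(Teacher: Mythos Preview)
Your proof is correct and follows essentially the same approach as the paper: split into $r\le 2$ and $r\in(2,2+\delta]$, handle the large-$r$ case by a Rosenthal/Marcinkiewicz--Zygmund type bound, and reduce the small-$r$ case to $r=2$ via Jensen. The only cosmetic difference is that the paper invokes Rosenthal's inequality in its maximal form (bounding $\bb E\max_k|S_k|^s$ directly for $s\ge 2$), whereas you first apply Doob's $L^r$ inequality to pass from the maximum to $|S_n|$ and then bound $\bb E|S_n|^r$; these are equivalent standard routes.
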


\begin{proof}
By Rosenthal's inequality, for any $s \in [2, 2 + \delta]$, there exists a constant $c>0$ such that for any $n \geq 1$, 
\begin{align*}
\bb E \left( \max_{1 \leq k \leq n} |S_k|^s \right) 
\leq  c \sum_{i = 1}^n  \bb E \left( |X_i|^s \right)  +   c \left[ \sum_{i = 1}^n  \bb E \left( |X_i|^2 \right) \right]^{s/2}
\leq  c n^{s/2}. 
\end{align*}
Using this with $s=2$ and H\"older's inequality, 
we get that, for $r \in (0, 2 + \delta]$, there exists a constant $c>0$ such that for any $n \geq 1$, 
\begin{align*}
\bb E \left( \max_{1 \leq k \leq n} |S_k|^r \right) 
\leq  \left[ \bb E \left( \max_{1 \leq k \leq n} |S_k|^2 \right)  \right]^{r/2}
\leq c n^{r/2}, 
\end{align*}
completing the proof of the lemma.  
\end{proof}

We shall need the following Fuk-Nagaev inequality (cf.\ \cite{FN71}).

\begin{lemma}  \label{FNB-inequality}
Assume that $\bb EX_i =0$ and $\bb E (X_i^2) = \sigma^2 >0$.  
Then, for any $n \geq 1$ and $x, y >0$, 
\begin{align*}
\bb P \left( |S_n| > x,  \max_{1 \leq i \leq n} |X_i| \leq y \right)
\leq 2 e^{x/y } \left( \frac{n \sigma^2}{xy} \right)^{ x/ y }. 
\end{align*}
\end{lemma}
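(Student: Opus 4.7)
The plan is to apply the classical Fuk--Nagaev approach via truncation from above combined with the exponential Chernoff bound, with the two-sided inequality obtained through symmetry.

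First, I would split the event by the sign of $S_n$:
\begin{align*}
\bb P(|S_n| > x, \max_i |X_i| \leq y) \leq \bb P(S_n > x, \max_i X_i \leq y) + \bb P(-S_n > x, \max_i(-X_i) \leq y).
\end{align*}
Since the hypotheses $\bb E X_i = 0$ and $\bb E X_i^2 = \sigma^2$ are invariant under $X_i \mapsto -X_i$, it suffices to establish the one-sided bound
\begin{align*}
\bb P(S_n > x, \max_i X_i \leq y) \leq e^{x/y} \left( \frac{n \sigma^2}{xy} \right)^{x/y},
\end{align*}
from which the factor $2$ follows by adding the two contributions.

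For the one-sided bound I would work with the truncation $Z_i = X_i \wedge y$, so that $Z_i \leq y$ and on $\{\max_i X_i \leq y\}$ we have $Z_i = X_i$ and hence $S_n = \sum_i Z_i$. For any $\lambda > 0$, Markov's inequality yields
\begin{align*}
\bb P(S_n > x, \max_i X_i \leq y) \leq e^{-\lambda x} \prod_{i=1}^n \bb E e^{\lambda Z_i}.
\end{align*}
The elementary pointwise inequality
\begin{align*}
e^{\lambda u} \leq 1 + \lambda u + \frac{u^2}{y^2}(e^{\lambda y} - 1 - \lambda y), \qquad u \leq y,
\end{align*}
together with $\bb E Z_i \leq \bb E X_i = 0$, $\bb E Z_i^2 \leq \bb E X_i^2 = \sigma^2$, and $1+t \leq e^t$, then delivers
\begin{align*}
\bb P(S_n > x, \max_i X_i \leq y) \leq \exp\left( -\lambda x + \frac{n\sigma^2}{y^2}(e^{\lambda y} - 1 - \lambda y) \right).
\end{align*}

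The last step is to optimize over $\lambda$. Setting $u = xy/(n\sigma^2)$ and choosing $\lambda y = \log(1 + u)$, a direct substitution reduces the exponent to $(x/y)[1 - (u+1) u^{-1} \log(1+u)]$. The elementary inequality $(u+1)\log(1+u) \geq u \log u$ for $u > 0$, which follows from $g(u) := (u+1)\log(1+u) - u\log u$ being increasing (its derivative is $\log(1 + 1/u) > 0$) with $g(0^+) = 0$, upgrades this to
\begin{align*}
-\lambda x + \frac{n\sigma^2}{y^2}(e^{\lambda y} - 1 - \lambda y) \leq \frac{x}{y} - \frac{x}{y} \log \frac{xy}{n\sigma^2},
\end{align*}
and exponentiating gives the required one-sided bound. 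I expect the only technical nuisance to be verifying the pointwise inequality $e^{\lambda u} \leq 1 + \lambda u + u^2 (e^{\lambda y} - 1 - \lambda y)/y^2$ for $u \leq y$, which amounts to dominating the quantity $\varphi(u) := (e^{\lambda u} - 1 - \lambda u)/u^2$ by $\varphi(y)$; this splits into the case $0 \leq u \leq y$, handled by the nonnegative power series expansion which makes $\varphi$ increasing, and the case $u < 0$, handled by the bound $e^{\lambda u} \leq 1 + \lambda u + \lambda^2 u^2/2$ (equivalently $\varphi(u) \leq \varphi(0) \leq \varphi(y)$). Note that the lemma is trivial when $xy \leq n\sigma^2$, since the right-hand side then exceeds $2$.
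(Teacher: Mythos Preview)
Your argument is correct and follows the classical Fuk--Nagaev route: truncation $Z_i = X_i \wedge y$, Chernoff's bound, the pointwise inequality $e^{\lambda u} \leq 1 + \lambda u + u^2 y^{-2}(e^{\lambda y}-1-\lambda y)$ for $u \leq y$, and optimisation in $\lambda$. The verification of the pointwise inequality via monotonicity of $\varphi(u) = (e^{\lambda u}-1-\lambda u)/u^2$ on $\bb R_+$ and the bound $\varphi(u) \leq \lambda^2/2 = \varphi(0)$ for $u<0$ is clean, as is the elementary estimate $(u+1)\log(1+u) \geq u\log u$ that closes the optimisation.

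There is no comparison to be made with the paper's proof: the paper does not prove this lemma at all, it simply cites Fuk and Nagaev \cite{FN71}. You have supplied a complete self-contained proof where the paper only gives a reference. One minor remark: the aside about the case $xy \leq n\sigma^2$ being trivial is unnecessary, since your choice $\lambda y = \log(1+u)$ with $u = xy/(n\sigma^2)$ is strictly positive for all $x,y>0$, so the argument already covers every case without splitting.
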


The following lemma, adapted from \cite[Lemma 5.8]{GQX24}, is a consequence of the central limit theorem. 

\begin{lemma} \label{lemma-sup  S_k -001} 
There exists a constant $c > 0$ such that, 
for any $M \geq 1$ and $n\geq 1$, 
\begin{align*} 
 \sup_{x \in \bb R}
\bb P\left( \sup_{1\leq k \leq n}  | x + S_k | \leq M   \right) \leq c e^{-  \frac{n}{2 M^2}}.
\end{align*}
 \end{lemma}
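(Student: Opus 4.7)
The plan is to exploit the scaling $M \leftrightarrow \sqrt n$ by partitioning $\{1,\ldots,n\}$ into blocks of length proportional to $M^{2}$. On the confinement event, the position of the walk at every block endpoint lies in $[-M,M]$, so the successive block increments are constrained to an interval of length $2M$; by independence this reduces the problem to estimating a single one-dimensional probability which can be controlled by the central limit theorem.

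More concretely, I would fix a block length $m \asymp M^{2}$, set $N = \lfloor n/m \rfloor$, and observe that on the event $\{\sup_{1\leq k\leq n}|x+S_k|\leq M\}$ one has $|x+S_{jm}|\leq M$ for every $j = 0,\ldots,N$; hence, by the triangle inequality, the i.i.d.\ block increments $\Delta_j := S_{jm}-S_{(j-1)m}$, each distributed as $S_m$, satisfy $|\Delta_j|\leq 2M$ for every $j=1,\ldots,N$. Independence then yields
\begin{align*}
\mathbb{P}\!\left(\sup_{1\leq k\leq n}|x+S_k|\leq M\right) \leq \mathbb{P}(|S_m|\leq 2M)^{N}.
\end{align*}

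I would then bound the single-block probability by a constant $\rho<1$ uniformly in $M$. Writing $\mathbb{P}(|S_m|\leq 2M) = \mathbb{P}(|S_m|/(\sigma\sqrt m)\leq 2M/(\sigma\sqrt m))$ and using the central limit theorem supplemented by a Berry--Esseen type remainder (available under $\bb E|X_1|^{2+\delta}<\infty$), one obtains
\begin{align*}
\mathbb{P}(|S_m|\leq 2M) \leq 2\Phi\!\left(\frac{2M}{\sigma\sqrt m}\right)-1 + O\!\left(m^{-\delta/2}\right).
\end{align*}
Choosing $m = \lceil c_1 M^{2}\rceil$ with $c_1$ sufficiently large keeps $2M/(\sigma\sqrt m)$ small enough that the right-hand side is uniformly bounded away from $1$, so one gets $\mathbb{P}(\sup_{k\leq n}|x+S_k|\leq M) \leq c\,e^{-c_0 n/M^2}$ for some $c,c_0>0$. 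The regime $n\leq 2M^{2}$ is handled trivially since $e^{-n/(2M^2)}$ is then bounded below by a positive constant; the bounded range $M\in[1,M_0]$ is treated separately by using a block length depending only on $M_0$ rather than on $M$, and all of this is independent of the starting point $x$.

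The main obstacle is calibrating $c_0$ so as to reach the particular value $1/2$ in the exponent stated in the lemma. The naive block-CLT estimate produces a constant $c_0$ determined by the optimal block length and by $\rho$, which in turn depends on $\sigma$ and on the law of $X_1$. To match the prescribed rate $1/(2M^2)$ one can either optimise the choice of $m$ carefully using the Berry--Esseen remainder, or bypass the block argument by coupling $(S_k)$ with a Brownian motion via the Sakhanenko construction (Lemma \ref{FCLT}) and importing the eigenfunction/Dirichlet-eigenvalue expansion for the Brownian survival probability in $[-M,M]$; the leading multiplicative constant $c$ then absorbs the transitional and small-$M$ regimes.
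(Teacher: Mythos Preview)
Your approach is essentially the same as the paper's: block the walk into pieces of length $m\asymp M^2$, use the central limit theorem to bound the single-block confinement probability by some $\rho<1$, and iterate. The one cosmetic difference is that the paper conditions on the position at each block endpoint and uses the Markov property to obtain the bound $(\sup_{x'}\mathbb P(|x'+S_m|\leq M))^{K}$, whereas you bound the block increments by $2M$ directly and get $\mathbb P(|S_m|\leq 2M)^{N}$; the two are equivalent up to harmless constants.

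Your worry about hitting the exact exponent $1/2$ is well placed, and in fact the paper's proof does not verify it either: it obtains $q^{K}$ with $K\approx \rho^{2}n/M^{2}$ for some small $\rho$ and some $q\in(0,1)$ and simply asserts that ``the assertion follows''. The block argument only yields $ce^{-c_0 n/M^2}$ for \emph{some} $c_0>0$, and optimising over $\rho$ and $q$ does not reach $c_0=1/2$. This is harmless for the paper's purposes, since the lemma is used only in Lemma~\ref{Lemma 2-0}, where any positive exponent gives $\mathbb P(\nu_n>N_n)\leq cn^{-\gamma c_0}$ and $\gamma$ is chosen large afterwards. So you may either state the lemma with a generic $c_0>0$, or---if you really want the displayed constant---follow your second suggestion and invoke the Brownian coupling (Lemma~\ref{FCLT}) together with the exact Dirichlet eigenvalue $\pi^{2}/(8M^{2})>1/(2M^{2})$ for Brownian motion in $[-M,M]$.
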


\begin{proof}
Let $m = [\rho^{-2} M^2]$ and $K=\left[ n/ m \right]$, where $\rho > 0$ will be chosen later.
It is easy to see that, for any $t \in \bb R$,
\begin{align}
\mathbb{P}\left( \max_{1\leq k\leq n}\left\vert t + S_{k}\right\vert \leq  M \right)   
\leq \mathbb{P}\left( \max_{1\leq k\leq K}\left\vert t+S_{km}\right\vert \leq M \right) .  \label{nu000}
\end{align}%
Using the independence of the random variables $(X_k)$, it follows that 
\begin{align*}
\mathbb{P}\left( \max_{1\leq k\leq K}\left\vert x +S_{km}\right\vert \leq M \right)  
\leq \mathbb{P}\left( \max_{1\leq k\leq K-1}\left\vert x +S_{km}\right\vert \leq M \right) 
 \sup_{x'\in \mathbb{R}} \mathbb{P} \left( \left\vert x'+S_{m}\right\vert \leq M \right) ,
\end{align*}
from which iterating, we get
\begin{align} \label{Piterations-001}
\mathbb{P} \left( \max_{1\leq k\leq K}\left\vert t +S_{km}\right\vert \leq  M \right) 
 \leq \left(  \sup_{t'\in \mathbb{R}} \mathbb{P} \left( \left\vert x'+S_{m}\right\vert \leq M \right) \right) ^{K}.
\end{align}
By the central limit theorem, 
there exist a sequence $r_m\to 0$ as $m\to\infty$ and a constant $q\in (0,1)$ such that, for any $x' \in \bb R$ and $m \geq 1$, 
\begin{align*}
\mathbb{P}\left( \left|  x' +S_{m} \right| \leq M \right) 
 = \mathbb{P} \left( \frac{S_{m}}{\sqrt{m}} \in \left[ \frac{-M - x' }{\sqrt{m}},  \frac{M- t'}{\sqrt{m}} \right] \right)  
 \leq  \int_{  \frac{-M-x'}{\sqrt{m}} }^{ \frac{M-x'}{\sqrt{m} } }  \phi_{\sigma^2} (u) du  +  r_m
\leq  q, 
\end{align*}
where for the last inequality we take $\rho >0$ sufficiently small and $m$ sufficiently large so that 
$\frac{2M}{\sqrt{m}} + r_m \leq 4 \rho + r_m  \leq q $. 
The assertion of the lemma follows from \eqref{nu000} and \eqref{Piterations-001}. 
\end{proof}

The following bound is due to Rogozin (\cite[Corollary 4]{Rog77}).

\begin{lemma}\label{Lem-bound for V-001}
There exists $c>0$ such that, for any $x \geq 1$, 
\begin{align}\label{inequa-Rogozin}
0\leq  \frac{V(x)}{x} - 1  \leq c x^{-\delta}. 
\end{align}
\end{lemma}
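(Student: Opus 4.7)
\noindent The inequality splits into a trivial lower bound and a substantive upper bound, which I would treat separately.

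For the lower bound, since $\bb E X_1 = 0$ and $\sigma^2 > 0$, the random walk $(S_n)$ oscillates and hence $\bb P(\tau_x < \infty) = 1$ for every $x \in \bb R$. On the event $\{\tau_x < \infty\}$ one has $x + S_{\tau_x} < 0$, so $-S_{\tau_x} > x$, and for $x \geq 0$ this gives $-S_{\tau_x} \geq x$ pathwise. Taking expectations yields $V(x) = \bb E[-S_{\tau_x}] \geq x$, whence $V(x)/x - 1 \geq 0$ for every $x \geq 1$.

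For the upper bound I would interpret $V(x) - x$ as an expected overshoot of the strict descending ladder process of $(S_n)$. Let $T_0 = 0$ and $T_k = \inf\{ n > T_{k-1} : S_n < S_{T_{k-1}} \}$ be the successive strict descending ladder epochs, $\hat H_k = -(S_{T_k} - S_{T_{k-1}}) > 0$ the ladder heights (i.i.d.\ copies of a variable $\hat H$ with $\mu := \bb E \hat H \in (0, \infty)$), and set $Y_k = \hat H_1 + \cdots + \hat H_k$ together with $N_x := \inf\{ k \geq 1 : Y_k > x \}$. Since $S_{\tau_x} < -x \leq \min_{n < \tau_x} S_n$, the time $\tau_x$ is itself a strict descending ladder epoch, namely $\tau_x = T_{N_x}$, and hence $-S_{\tau_x} = Y_{N_x}$. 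Therefore
\[
V(x) - x = \bb E[Y_{N_x} - x]
\]
is precisely the expected overshoot of the renewal process $(Y_k)$ at level $x$, and the task reduces to a renewal-theoretic estimate.

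The key analytic input is the moment transfer $\bb E(|X_1|^{2+\delta}) < \infty \Rightarrow \bb E \hat H^{1+\delta} < \infty$, a classical consequence of the Wiener--Hopf factorisation (Chow's ladder-height moment theorem). Granted this, the tail bound $\bb P(\hat H > t) \leq c t^{-(1+\delta)}$ yields $\bb E[(\hat H - s)^+] \leq c s^{-\delta}$ for $s \geq 1$, while the elementary renewal theorem gives $U([0, x]) \leq c(1+x)$ for the renewal measure $U(dy) := \sum_{k \geq 0} \bb P(Y_k \in dy)$. Inserting these into the classical overshoot identity
\[
\bb E[Y_{N_x} - x] = \int_{[0, x]} U(dy)\, \bb E\bigl[ (\hat H - (x - y))^+ \bigr],
\]
and splitting the range of integration at $y = x/2$ (using the $s^{-\delta}$ bound on the inner piece $[0, x/2]$ and a combination of Blackwell's theorem and the trivial bound $\bb E[(\hat H - s)^+] \leq \bb E \hat H$ on the outer piece $[x/2, x]$), one obtains $\bb E[Y_{N_x} - x] \leq c\, x^{1 - \delta}$ for $x \geq 1$. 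Dividing by $x$ yields the claimed $V(x)/x - 1 \leq c x^{-\delta}$.

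The main obstacle is the moment-transfer implication: extracting $\bb E \hat H^{1+\delta} < \infty$ from $\bb E(|X_1|^{2+\delta}) < \infty$ is genuinely fluctuation-theoretic and far from elementary, especially for non-integer $\delta$. Once this is in hand, the remaining overshoot estimate is a routine renewal computation, and the argument then follows the path laid out in Rogozin's original paper \cite{Rog77}.
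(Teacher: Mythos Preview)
The paper does not prove this lemma; it simply cites Rogozin \cite{Rog77}. Your approach---recasting $V(x)-x$ as the expected overshoot of the descending-ladder-height renewal process and invoking the moment transfer $\bb E|X_1|^{2+\delta}<\infty \Rightarrow \bb E\hat H^{1+\delta}<\infty$---is precisely the renewal-theoretic path behind Rogozin's result, and you correctly flag the moment transfer as the only genuinely non-elementary input.

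There is, however, a gap in your overshoot estimate. On the outer piece $[x/2,x]$ one has $s=x-y\in[0,x/2]$, and pairing a Blackwell-type bound $U([x/2,x])\leq cx$ with the trivial bound $g(s):=\bb E[(\hat H-s)^+]\leq\mu$ yields only $O(x)$, not $O(x^{1-\delta})$. The repair is not to discard the decay of $g$ on the outer piece: use the bounded-increments property $\sup_{a\geq 0}U([a,a+1])\leq C$ (a consequence of $\bb E\hat H<\infty$) together with $g(s)\leq cs^{-\delta}$ for $s\geq 1$ and $g(s)\leq\mu$ for $s<1$, and sum over unit intervals to obtain $\int_{[0,x]}U(dy)\,g(x-y)\leq C\mu+c\int_1^x s^{-\delta}\,ds\leq c\,x^{1-\delta}$ for $\delta\in(0,1)$. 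With this correction the argument goes through. (For $\delta\geq 1$ the expected overshoot tends to the positive constant $\bb E\hat H^2/(2\mu)$ rather than to zero, so the sharp bound is $c/x$; the inequality should then be read with $\min(\delta,1)$ in place of $\delta$.)
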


We continue with the following preliminary bound for the expectation of 
the random walk $(x + S_n)_{n \geq 1}$ killed at the exit time $\tau_x$.

\begin{lemma}\label{Lem-MKillTn}
There exists a constant $c > 0$ such that for any  $n \geq 1$ and $x \in \bb R$, 
\begin{align}\label{weak  bound for V_n-001}
\max\{x, 0\}  \leq  \mathbb{E} \left(x + S_n; \tau_x > n \right) 
\leq V(x) = -\mathbb{E} (S_{\tau_x}). 
\end{align}
\end{lemma}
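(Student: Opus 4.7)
The plan is to apply the optional stopping theorem to the zero-mean martingale $(x + S_k)_{k \geq 0}$ at the bounded stopping time $T_n := \tau_x \wedge n$. Since $T_n \leq n$ and $\bb E|S_n| < \infty$ under the moment assumption, optional stopping yields $\bb E(x + S_{T_n}) = x$. Splitting the expectation according to whether $\tau_x > n$ or $\tau_x \leq n$, and using $T_n = n$ on the former event while $T_n = \tau_x$ on the latter, I obtain the key identity
\begin{align*}
\bb E(x + S_n;\, \tau_x > n) \;=\; x - \bb E(x + S_{\tau_x};\, \tau_x \leq n) \;=\; x + \bb E(-x - S_{\tau_x};\, \tau_x \leq n).
\end{align*}

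For the lower bound, on the event $\{\tau_x \leq n\}$ we have $x + S_{\tau_x} < 0$ by the very definition of $\tau_x$, so the right-most term in the identity above is nonnegative; hence $\bb E(x + S_n; \tau_x > n) \geq x$. On the other hand, $x + S_n \geq 0$ on $\{\tau_x > n\}$, so trivially $\bb E(x + S_n; \tau_x > n) \geq 0$. Combining the two inequalities gives the claimed lower bound $\max\{x, 0\}$.

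For the upper bound, since $-x - S_{\tau_x} \geq 0$ on $\{\tau_x < \infty\}$, the quantity $\bb E(-x - S_{\tau_x};\, \tau_x \leq n)$ is nondecreasing in $n$. By monotone convergence, together with $\tau_x < \infty$ a.s.\ (recurrence of the centered random walk), it converges to $\bb E(-x - S_{\tau_x}) = V(x) - x$ by the definition \eqref{Equ-Vx} of $V$. Plugging back into the key identity yields $\bb E(x + S_n; \tau_x > n) \leq V(x)$ for every $n$, and as a byproduct gives a direct proof of \eqref{Def_V_002bb}. The only delicate point is the finiteness of $V(x) = -\bb E S_{\tau_x}$, which is guaranteed by the $2 + \delta$ moment hypothesis via the cited result \cite{GLL18Ann}; everything else is a direct consequence of optional stopping and monotone convergence, so no real obstacle is anticipated.
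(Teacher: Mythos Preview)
Your proposal is correct and follows essentially the same route as the paper: both derive the identity $\bb E(x+S_n;\tau_x>n)=x-\bb E(x+S_{\tau_x};\tau_x\leq n)$ via optional stopping at $\tau_x\wedge n$, then use $x+S_{\tau_x}<0$ on $\{\tau_x\leq n\}$ to sandwich the correction term between $0$ and $V(x)-x$. You are slightly more explicit than the paper in two harmless places: you separately justify the bound $\geq 0$ (trivial since $x+S_n\geq 0$ on $\{\tau_x>n\}$), and you invoke monotone convergence and recurrence for the upper bound, whereas the paper simply bounds the partial expectation of the nonnegative variable $-(x+S_{\tau_x})$ by its full expectation.
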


\begin{proof}
By the optional stopping theorem, for any $x\in \bb R$, we get
\begin{align}\label{Pf_Optional_thm}
\mathbb{E} (x + S_n; \tau_x > n ) 
& = \bb E (x + S_{n})  - \mathbb{E} (x + S_{n}; 1\leq \tau_x \leq n )\notag \\
&= x  - \mathbb{E} ( x + S_{\tau_x}; 1\leq \tau_x \leq n ).
\end{align}
Since, on the event $\{ 1\leq \tau_x \leq n \}$ it holds  $- (x + S_{\tau_x}) \geq 0$, we get 
\begin{align*}
0 \leq - \mathbb{E} ( x + S_{\tau_x}; 1\leq \tau_x \leq n ) \leq - \mathbb{E} ( x + S_{\tau_x} ) 
= - x - \mathbb{E} ( S_{\tau_x} ) = - x + V(x), 
\end{align*}
where the last equality holds due to \eqref{Equ-Vx}. 
Substituting this into \eqref{Pf_Optional_thm} completes the proof of the lemma. 
\end{proof}

 We will need the following elementary inequality from \cite[Lemma 5.10]{GQX24}. 
 
 \begin{lemma}\label{Lem-trick}
For any $x \in \bb R$ and any random variable $Z$,
\begin{align*} 
\bb E^{1/2}\left( \left(x + Z\right)^2; x+Z\geq 0 \right) \leq \max\{x,0 \} + \bb E^{1/2} (Z ^2).
\end{align*}
\end{lemma}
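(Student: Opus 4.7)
The plan is to prove Lemma \ref{Lem-trick} by splitting into the two cases determined by the sign of $x$, since $\max\{x,0\}$ behaves differently in each.

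First I would handle the case $x \geq 0$. Here $\max\{x,0\} = x$, and the inequality becomes
\begin{align*}
\bb E^{1/2}\bigl( (x+Z)^2; x+Z \geq 0 \bigr) \leq x + \bb E^{1/2}(Z^2).
\end{align*}
I would simply drop the indicator, bounding the left-hand side by $\bb E^{1/2}((x+Z)^2) = \|x+Z\|_{L^2}$, and then apply Minkowski's inequality (or directly the triangle inequality in $L^2$) to obtain $\|x+Z\|_{L^2} \leq \|x\|_{L^2} + \|Z\|_{L^2} = x + \bb E^{1/2}(Z^2)$. This is routine.

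The more interesting case is $x < 0$, where $\max\{x,0\} = 0$, so I must show
\begin{align*}
\bb E^{1/2}\bigl( (x+Z)^2; x+Z \geq 0 \bigr) \leq \bb E^{1/2}(Z^2).
\end{align*}
The key observation is that on the event $\{x+Z \geq 0\}$ we have $Z \geq -x > 0$ and $0 \leq x+Z \leq Z$ (since adding the negative number $x$ decreases $Z$ but keeps it non-negative). Consequently $(x+Z)^2 \leq Z^2$ pointwise on this event, so
\begin{align*}
\bb E\bigl( (x+Z)^2; x+Z \geq 0 \bigr) \leq \bb E\bigl( Z^2; x+Z \geq 0 \bigr) \leq \bb E(Z^2),
\end{align*}
and taking square roots finishes the case.

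There is no serious obstacle here; the only subtlety is the pointwise bound $(x+Z)^2 \leq Z^2$ on $\{x+Z \geq 0\}$ when $x < 0$, which uses both the negativity of $x$ and the non-negativity of $x+Z$ on this event to sandwich $x+Z$ between $0$ and $Z$. Combining the two cases yields the claim uniformly in $x \in \bb R$.
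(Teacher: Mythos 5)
Your proof is correct and essentially matches the paper's: both drop the indicator and apply the $L^2$ triangle inequality when $x \geq 0$ (the paper writes out the expansion $x^2 + 2x\bb E Z + \bb E Z^2 \leq x^2 + 2x\bb E^{1/2}(Z^2) + \bb E(Z^2) = (x + \bb E^{1/2}(Z^2))^2$, which is just Minkowski spelled out via $\bb E Z \leq \bb E^{1/2}(Z^2)$), and both use the pointwise bound $(x+Z)^2 \leq Z^2$ on the event $\{x+Z \geq 0\}$ when $x$ is negative. The only cosmetic difference is where the boundary case $x=0$ is assigned, which is immaterial.
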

\begin{proof}
If $x \leq 0$, we have $Z\geq -x\geq 0 $ since $x+Z\geq 0$. Hence $(x+Z)^2 \leq Z^2$ and the assertion follows.
If $x >0$, using $\bb EZ \leq \bb E^{1/2} (Z ^2)$, we get 
$x^2 + 2x\bb EZ + \bb E Z^2  \leq x^2 +2 x \bb E^{1/2} (Z ^2) + \bb E Z^2,$ 
from which the assertion follows. 
\end{proof}

\section{Proofs for the persistence probability} \label{SecProof Theor-probtau-001}

In this section, we give a proof of Theorem \ref{Theor-probtauUN-001}  
by using the bounds shown in Section  \ref{sec-properties heat kern}
and the  functional central limit theorem (Lemma \ref{FCLT}). 
All over this section we assume that  $\bb E (X_1) = 0$, $\bb E (X^2_1)= \sigma^2  > 0$  and that there exists $\delta > 0$ 
such that $\bb E (|X_1|^{2+\delta})  < \infty.$

\subsection{Auxiliary results}

Let $\ee \in [0, \frac{1}{2}]$. 
For any $x \in \bb R$ and $n\geq 1$, consider the first time 
when the random variable $|x + S_k| $ exceeds the level $M_{n} := n^{1/2 - \ee}$:
\begin{align}\label{Def_nun}
\nu_{n}  = \nu_{n,\ee}  = \inf \left\{ k \geq 1: |x + S_k| > M_{n} \right\}. 
\end{align}
The following lemma gives the tail behavior of $\nu_{n}$.  Let $N_n = N_n^{\ee} :=[\gamma n^{1-2\ee} \log n]$, 
where $\gamma>0$ is chosen to be a sufficiently large constant.

\begin{lemma} \label{Lemma 2-0} 
For any $\gamma>0$, there exists a constant $c >0$ 
such that for any $n\geq 1$, $x\in \bb R$ and $\ee \in [0, \frac{1}{2}]$, 
\begin{align*} 
\mathbb{P} \left( \nu_{n} > N_n \right)= \mathbb{P} \left( \nu_{n} > [\gamma n^{1-2\ee} \log n] \right)
 \leq c n^{-\gamma/2}. 
\end{align*}
\end{lemma}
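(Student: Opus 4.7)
The plan is to identify the event $\{\nu_n > N_n\}$ with the event that the random walk stays confined to a window of size $M_n$ for at least $N_n$ steps, and then to invoke Lemma \ref{lemma-sup S_k -001} directly. Indeed, by the definition of $\nu_n$ in \eqref{Def_nun}, we have the equivalence
\begin{equation*}
\{ \nu_n > N_n \} = \Bigl\{ \max_{1 \leq k \leq N_n} |x + S_k| \leq M_n \Bigr\},
\end{equation*}
which puts us precisely in the setting of Lemma \ref{lemma-sup S_k -001}. Applying that lemma with $n$ replaced by $N_n$ and $M$ replaced by $M_n$, we obtain the uniform-in-$x$ bound
\begin{equation*}
\mathbb{P}(\nu_n > N_n) \leq c \exp\!\left( - \frac{N_n}{2 M_n^2} \right).
\end{equation*}

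The remainder is purely arithmetic. With $M_n^2 = n^{1-2\varepsilon}$ and $N_n = [\gamma n^{1-2\varepsilon} \log n]$, the integer-part adjustment costs at most $1$, so
\begin{equation*}
\frac{N_n}{2 M_n^2} \geq \frac{\gamma n^{1-2\varepsilon} \log n - 1}{2 n^{1-2\varepsilon}} \geq \frac{\gamma}{2} \log n - \frac{1}{2}.
\end{equation*}
Inserting this into the exponential yields $\mathbb{P}(\nu_n > N_n) \leq c \sqrt{e}\, n^{-\gamma/2}$, which is the claimed bound (after absorbing $\sqrt{e}$ into the constant).

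There is no real obstacle here: the statement is essentially a reformulation of Lemma \ref{lemma-sup S_k -001} for the specific choices $M = n^{1/2-\varepsilon}$ and $n \leftarrow N_n$, together with the observation that $N_n$ was defined precisely so that $N_n / M_n^2 \sim \gamma \log n$. The one point worth checking is that the argument is genuinely uniform in both $x \in \mathbb{R}$ and $\varepsilon \in [0, \tfrac{1}{2}]$; this is automatic since Lemma \ref{lemma-sup S_k -001} is uniform in $x$, and the lower bound on $N_n / (2 M_n^2)$ above depends on $\varepsilon$ only through the harmless term $1/(2 n^{1-2\varepsilon}) \leq 1/2$.
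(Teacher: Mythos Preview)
Your proof is correct and follows exactly the same route as the paper's: identify $\{\nu_n > N_n\}$ with the confinement event, apply Lemma~\ref{lemma-sup  S_k -001} with $M=M_n$ and $n$ replaced by $N_n$, and simplify the exponent. The paper additionally remarks that $M_n = n^{1/2-\varepsilon} \geq 1$ (needed for the hypothesis of Lemma~\ref{lemma-sup  S_k -001}) and restricts to $n\geq 2$ to avoid the trivial case $N_1=0$, but these are cosmetic points already absorbed in your constant.
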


\begin{proof}
It suffices to consider the case $n\geq 2$. 
Since for $\ee\in [0,1/2]$, we have $M_n= n^{1/2-\ee}\geq 1$.  
Using Lemma \ref{lemma-sup  S_k -001}, we get
\begin{align*} 
\bb P\left(\nu_n \geq N_n \right) 
&=  \mathbb{P} \left( \sup_{ 1\leq k \leq N_n } |x + S_k | \leq M_{n} \right) \\
&\leq c \exp \left(- \frac{ N_n }{2M_n^2} \right) 
 = c \exp \left(- \frac{ [\gamma n^{1-2\ee} \log n] }{2n^{1-2\ee}} \right)  \leq c n^{-\gamma/2}, 
\end{align*}
which proves the assertion of the lemma. 
\end{proof}

For any $x\in \bb R$ and $n\geq 1$, set 
\begin{align*}
E_{x, n, \ee} = \mathbb{E} \left( x + S_{\nu_{n,\ee}}; \tau_x > \nu_{n,\ee}, \nu_{n,\ee} \leq N_n^{\ee} \right). 
\end{align*}
The following lemma will be used repeatedly in the subsequent analysis to establish
 Theorems \ref{Theor-probtauUN-001}  and \ref{Theor-probIntUN-002}. 
It demonstrates that $E_{x, n, \ee}$ is well approximated by the harmonic function $V$.

\begin{lemma}\label{lemma E1 and E2}
For any $\ee \in  [0, \frac{1}{2}]$ and $\gamma > 0$, 
there exists a constant $c>0$ such that for any $n \geq 1$ and $x \in \bb R$, 
\begin{align}\label{control of E1-002}
E_{x, n, \ee} \leq  V(x)  \leq  \left( 1 + n^{-(1/2 - \ee) \delta} \right) E_{x, n, \ee}
 + c \frac{1 +V(x)}{n^{\gamma/8}}. 
\end{align}
\end{lemma}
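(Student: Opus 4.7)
The starting point is the harmonic identity $V(x) = \bb E(V(x+S_m); \tau_x > m)$ for each $m \geq 1$, which follows from \eqref{Doob transf} by induction on $m$ using the Markov property. I would apply it at $m = N_n$ and then split the expectation according to whether $\nu_n \leq N_n$ or $\nu_n > N_n$. On the event $\{\nu_n \leq N_n\}$, the strong Markov property at $\nu_n$, combined with a second application of the harmonic identity on the random walk started at $x+S_{\nu_n}$, collapses $\bb E(V(x+S_{N_n}); \tau_x > N_n, \nu_n \leq N_n)$ to $A := \bb E(V(x+S_{\nu_n}); \tau_x > \nu_n, \nu_n \leq N_n)$. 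Writing $B := \bb E(V(x+S_{N_n}); \tau_x > N_n, \nu_n > N_n)$, one obtains the decomposition $V(x) = A + B$.

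On the event defining $A$ we have $\tau_x > \nu_n$, so $x+S_{\nu_n} \geq 0$; by the very definition of $\nu_n$, also $|x+S_{\nu_n}| > M_n = n^{1/2-\ee} \geq 1$, hence $x+S_{\nu_n} > M_n$. Rogozin's bound (Lemma \ref{Lem-bound for V-001}) then gives, for $y = x+S_{\nu_n}$, $y \leq V(y) \leq y(1+c y^{-\delta}) \leq y(1+c M_n^{-\delta})$. Integrating over the event yields
\[
E_{x,n,\ee} \leq A \leq \bigl(1+c\, n^{-(1/2-\ee)\delta}\bigr)E_{x,n,\ee}.
\]
Combined with $A \leq V(x) = A+B$, the left inequality gives the lower bound $E_{x,n,\ee} \leq V(x)$ in \eqref{control of E1-002}, while the right inequality produces the leading piece of the upper bound.

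It remains to control $B$. On $\{\tau_x > N_n,\, \nu_n > N_n\}$ we have $0 \leq x+S_{N_n} \leq M_n$, so $V(x+S_{N_n}) \leq V(M_n) \leq C M_n$ by monotonicity of $V$ and Lemma \ref{Lem-bound for V-001}. I would then invoke Cauchy-Schwarz,
\[
B \leq \bigl[\bb E(V(x+S_{N_n})^2; \tau_x > N_n)\bigr]^{1/2}\bigl[\bb P(\nu_n > N_n)\bigr]^{1/2},
\]
and estimate the second moment via $V(y) \leq C(1+y_+)$, Lemma \ref{Lem-trick}, Lemma \ref{Lem-Burkholder} and the bound $x_+ \leq V(x)$ (Lemma \ref{Lem-MKillTn}), yielding $\bb E(V(x+S_{N_n})^2; \tau_x > N_n) \leq C(1 + V(x)^2 + N_n)$. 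Combined with $\bb P(\nu_n > N_n) \leq c n^{-\gamma/2}$ from Lemma \ref{Lemma 2-0}, this gives $B \leq c(1+V(x)) n^{-\gamma/4} + c\sqrt{N_n}\, n^{-\gamma/4}$. Since $\sqrt{N_n} \leq c n^{(1-2\ee)/2}\sqrt{\log n}$, the second term is absorbed into $c\,n^{-\gamma/8}$ for $\gamma$ large compared with $1-2\ee$; for smaller $\gamma$ the inequality still holds after enlarging $c$ (which is allowed to depend on $\gamma$, $\ee$, $\delta$), because the target $c(1+V(x))/n^{\gamma/8}$ then has only slow decay and $B \leq V(x)$ trivially.

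\textbf{Main obstacle.} The delicate point is producing the error term precisely in the form $c(1+V(x))/n^{\gamma/8}$. The crude deterministic bound $B \leq V(M_n)\,\bb P(\nu_n > N_n)$ does not see the starting point $x$ and produces only $n^{1/2-\ee-\gamma/2}$, which is inadequate when $V(x)$ is large. The Cauchy-Schwarz upgrade, which feeds in a second moment of the killed random walk $x+S_{N_n}$ through Lemma \ref{Lem-trick} and the relation $x_+ \leq V(x)$, is the mechanism that introduces the multiplicative factor $1+V(x)$ on the error side and thus makes the lemma usable in the subsequent proofs of Theorems \ref{Theor-probtauUN-001} and \ref{Theor-probIntUN-002}.
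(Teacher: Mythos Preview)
Your approach is essentially the paper's: apply the harmonic identity for $V$, split on $\{\nu_n \leq N_n\}$, use optional stopping to reduce to $\nu_n$, compare $V(x+S_{\nu_n})$ with $x+S_{\nu_n}$ via Rogozin's bound, and control the remainder $B$ by Cauchy--Schwarz together with a second-moment bound on the killed walk. Your choice to stop at time $N_n$ rather than $n$ is in fact slightly cleaner (it avoids a harmless ambiguity in the paper when $\ee=0$ and $N_n>n$).

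One point does not hold up: your fallback for small $\gamma$. The claim ``$B\leq V(x)$ trivially, so for small $\gamma$ enlarge $c$'' does not deliver $B\leq c(1+V(x))/n^{\gamma/8}$, because the right-hand side still decays in $n$ while $V(x)$ does not. The same gap is present in the paper's own proof: the passage $c\bigl(V(x)+n^{1/2}\bigr)\,\bb P^{1/2}(\nu_n>N_n)\leq c(1+V(x))\,n^{-\gamma/8}$ needs $n^{1/2-\gamma/8}$ bounded, hence $\gamma\geq 4$. In the sequel the lemma is only invoked with $\gamma$ chosen large, so this is an imprecision in the lemma's statement rather than a defect in either argument; but you should not claim the small-$\gamma$ case is covered by the trivial bound.
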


\begin{proof}
Let $n\geq 1$,  $x\in \bb R$ and $\ee\in [0,\frac{1}{2}]$. 
By \eqref{Doob transf}, the sequence $\big( V(x + S_n) \mathds{1}_{\{ \tau_x > n \}} \big)_{n \geq 1}$ is a martingale
with respect to the filtration $\mathscr F_{n} = \sigma (X_1, \ldots, X_n)$. 
Therefore, by the optional stopping theorem, 
\begin{align} \label{start split V-001}
V(x) & =  \mathbb{E} \left( V(x + S_n); \tau_x > n \right) \notag\\
& =  \mathbb{E} \left( V(x + S_n); \tau_x > n, \nu_n \leq N_n \right) 
     + \mathbb{E} \left( V(x + S_n); \tau_x > n, \nu_n > N_n  \right) \nonumber\\ 
  & =  \mathbb{E} \left( V(x + S_{\nu_n}); \tau_x > \nu_n, \nu_n \leq N_n \right)  
   + \mathbb{E} \left( V(x + S_n); \tau_x > n, \nu_n > N_n \right).   
\end{align}
By Lemma \ref{Lem-MKillTn}, 
we have $V(x) \geq \max\{x, 0\}$ for $x\in \bb R$, which implies that
the last term in the right-hand side of \eqref{start split V-001} is non-negative.   
Therefore, from \eqref{start split V-001} and the fact that $V(x + S_{\nu_n}) \geq x + S_{\nu_n}$
on the event $\{\tau_x > \nu_n\}$, we get that, for any $x\in \bb R$,
\begin{align*}
V(x)  \geq \mathbb{E} \left( V(x + S_{\nu_n}); 
     \tau_x > \nu_n, \nu_n \leq N_n \right) \geq 
     \mathbb{E} \left( x + S_{\nu_n}; 
     \tau_x > \nu_n, \nu_n \leq N_n \right) = E_{x, n, \ee}, 
\end{align*}
which proves the left-hand side bound of \eqref{control of E1-002}.
For the right-hand side bound, 
since $x'=x + S_{\nu_n}\geq n^{1/2 - \ee}$ on the event $\{ \tau_x > \nu_n  \}$, using Lemma \ref{Lem-bound for V-001}, 
we get
\begin{align*}
V(x + S_{\nu_n}) \leq  \left( 1 + c (x + S_{\nu_n})^{-\delta} \right) (x + S_{\nu_n}) \leq \left( 1 + n^{-(1/2 - \ee) \delta} \right) (x + S_{\nu_n}). 
\end{align*}
It follows that 
\begin{align}\label{control of E2 0}
\mathbb{E} \left( V(x + S_{\nu_n}); \tau_x > \nu_n, \nu_n \leq N_n \right)
& \leq \left( 1 + n^{-(1/2 - \ee) \delta} \right) \mathbb{E} \left(  x + S_{\nu_n}; 
  \tau_x > \nu_n, \nu_n \leq N_n \right) \notag\\
 & = \left( 1 + n^{-(1/2 - \ee) \delta} \right) E_{x, n, \ee}. 
\end{align}
Combining \eqref{start split V-001} and \eqref{control of E2 0}, we have, for any $n\geq 1$ and $x\in \bb R$, 
\begin{align}\label{control of E21 1}
V(x) 
 \leq  \left( 1 + n^{-(1/2 - \ee) \delta} \right) E_{x, n, \ee}
 +  \mathbb{E} \left( V(x + S_n); \tau_x >n, \nu_n > N_n \right).
\end{align}
By the Cauchy-Schwarz inequality, the bound $V(x') \leq c(1 + x')$ for $x'\geq 0$ (cf.\ Lemma \ref{Lem-bound for V-001}) 
and Lemma \ref{Lemma 2-0}, 
we get that for any $\ee \in [0,1/2]$ and $\gamma>0$, there exists a constant $c>0$ such that, for any $n \geq 1$ and $x\in \bb R$,  
\begin{align}\label{control of E22 all}
 \mathbb{E} \left( V(x + S_n); \tau_x >n, \nu_n > N_n \right)  
& \leq c (V(x) + n^{1/2}) \mathbb{P}^{1/2} \left( \nu_n > N_n \right)  \nonumber\\
& \leq   c (1 + V(x))  n^{-\gamma/8}. 
\end{align}
Combining \eref{control of E21 1} and \eref{control of E22 all}, 
we obtain that for any $\ee \in [0,1/2]$ and $\gamma>0$, 
there exists a constant $c>0$ such that, for any $n \geq 1$ and $x\in \bb R$,
\begin{align*}
V(x) \leq  \left( 1 + n^{-(1/2 - \ee) \delta} \right) E_{x, n, \ee} + c (1 +V(x))  n^{-\gamma/8}. 
\end{align*}
This finishes the proof of \eqref{control of E1-002}.
\end{proof}

Using Lemmas \ref{Lemma 2-0} and \ref{lemma E1 and E2}, 
we show the following upper bound for the persistence probability $\bb P \left(\tau_x >n \right)$,
which will be applied in the proof of Lemma \ref{Bound for J_4}.

\begin{lemma} \label{new bound for Ptau_x>n-001}
There exists a constant $c >0$ such that 
for any $n \geq 1$ and $x \in \bb R$, 
\begin{align*}
\bb P \left(\tau_x >n \right) \leq c  \frac{1+V(x)  L\left(\frac{x}{\sigma\sqrt{n}}\right) }{n^{1/2}}. 
\end{align*}
\end{lemma}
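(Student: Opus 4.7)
The plan is first to establish the intermediate bound $\bb P(\tau_x > n) \leq c(1+V(x))/\sqrt n$ via Sakhanenko's coupling combined with a Markov split at $\nu_n$, and then to upgrade it to the stated form via a case analysis on $x/(\sigma\sqrt n)$. To begin with the coupling, Sakhanenko's theorem (Lemma \ref{FCLT}) applied on the horizon $[0,m]$ with a parameter $s>0$ produces, on an event of probability at least $1 - cm^{-\delta/2+(2+\delta)s}$, the inequality $\sup_{0\leq t\leq 1}|S_{[mt]} - \sigma B_{mt}| \leq \sigma m^{1/2-s}$. On this event, for $y\geq 0$, the event $\{\tau_y > m\}$ forces $(y+\sigma m^{1/2-s}) + \sigma B_u \geq 0$ for every $u \in [0,m]$, so that $\tau_{y+\sigma m^{1/2-s}}^{\sigma B} > m$. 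Combining with Lemma \ref{lemma tauBM}, the subadditivity of $H$ on $\bb R_+$ (a consequence of concavity together with $H(0)=0$), and the bound $H(z) \leq L(0) z$, this yields
\[
\bb P(\tau_y > m) \leq \frac{cy}{\sqrt m} + cm^{-s} + cm^{-\delta/2+(2+\delta)s}, \qquad y \geq 0.
\]
Balancing the last two terms with $s = \delta/(2(3+\delta))$ produces $\bb P(\tau_y > m) \leq cy/\sqrt m + cm^{-\alpha}$ with $\alpha := \delta/(2(3+\delta)) < 1/2$.

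Next, I would invoke the Markov property at $\nu_n$, taking $\ee = \alpha/2$ and $\gamma \geq 1$. Decomposing $\bb P(\tau_x > n)$ according to $\{\nu_n \leq N_n\}$, the complementary event contributes at most $cn^{-\gamma/2} \leq c/\sqrt n$ by Lemma \ref{Lemma 2-0}. On the main event, $n-\nu_n \geq n/2$ for $n$ large, so the coupling bound applied with $y = x + S_{\nu_n} \geq M_n$ and $m = n-\nu_n$ gives
\[
\bb P(\tau_x > n,\,\nu_n \leq N_n) \leq \frac{c\,E_{x,n,\ee}}{\sqrt n} + c n^{-\alpha}\,\bb P(\tau_x > \nu_n,\,\nu_n \leq N_n).
\]
Lemma \ref{lemma E1 and E2} provides $E_{x,n,\ee} \leq V(x)$, while the observation $x + S_{\nu_n} \geq M_n$ on $\{\tau_x > \nu_n\}$ yields $\bb P(\tau_x > \nu_n, \nu_n \leq N_n) \leq E_{x,n,\ee}/M_n \leq V(x)/n^{1/2-\ee}$. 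With $\ee = \alpha/2$ the second term is bounded by $cV(x)/n^{1/2+\alpha/2}$, which is absorbed into $cV(x)/\sqrt n$, so that $\bb P(\tau_x > n) \leq c(1+V(x))/\sqrt n$ for $x \geq 0$; the case $x < 0$ reduces to this one by Markov at time $1$ combined with the harmonicity identity \eqref{Doob transf}.

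Finally, to derive the stated bound I distinguish two regimes. For $x \geq \sigma\sqrt n$, the inequality $V(x) \geq x$ (Lemma \ref{Lem-MKillTn}) together with $L(z) = H(z)/z$ and the monotonicity of $H$ on $\bb R_+$ gives $V(x)L(x/\sigma\sqrt n) \geq H(1)\sigma\sqrt n$, so the trivial bound $\bb P(\tau_x > n) \leq 1$ already delivers the claim. For $0 \leq x < \sigma\sqrt n$, the monotonicity of $L$ on $\bb R_+$ gives $L(x/\sigma\sqrt n) \geq L(1) > 0$, and hence $c(1+V(x))/\sqrt n \leq c'(1+V(x)L(x/\sigma\sqrt n))/\sqrt n$. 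The main obstacle throughout lies in the coupling step: under only a $(2+\delta)$-moment hypothesis Sakhanenko's error is of order $m^{-\alpha}$ with $\alpha < 1/2$, strictly larger than the target $m^{-1/2}$. The key trick is that this suboptimal error gets multiplied by the small factor $\bb P(\tau_x > \nu_n, \nu_n \leq N_n) \leq V(x)/M_n$ arising from the fact that the walk must leave the strip through its upper boundary on $\{\tau_x > \nu_n\}$, and choosing $\ee < \alpha$ ensures that the product is dominated by $V(x)/\sqrt n$.
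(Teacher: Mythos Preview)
Your proposal is correct, but it takes a considerably more complicated route than the paper. The paper's argument for the intermediate bound $\bb P(\tau_x>n)\leq c(1+V(x))/\sqrt n$ is a one-liner: take $\ee=0$ in the definition of $\nu_n$, so that $M_n=n^{1/2}$, and observe that on $\{\tau_x>\nu_{n,0},\,\nu_{n,0}\leq N_n^0\}$ one has $x+S_{\nu_{n,0}}\geq n^{1/2}$, whence $\bb P(\tau_x>n,\,\nu_{n,0}\leq N_n^0)\leq n^{-1/2}E_{x,n,0}\leq n^{-1/2}V(x)$, with no coupling whatsoever. You reach the same intermediate bound, but by first establishing $\bb P(\tau_y>m)\leq cy/\sqrt m+cm^{-\alpha}$ via Sakhanenko and then absorbing the suboptimal $m^{-\alpha}$ error through the factor $V(x)/M_n$; this entire detour is avoidable. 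The final case analysis on $x/(\sigma\sqrt n)$ is essentially the same in both proofs (you split at $\sigma\sqrt n$, the paper at $\tfrac{\sigma}{2}\sqrt n$). Two minor remarks: your Markov-at-$\nu_n$ argument in fact works for all $x\in\bb R$, so the separate reduction for $x<0$ via Markov at time~$1$ is redundant; and your final paragraph only treats $x\geq 0$ explicitly, so you should note that for $x<0$ the monotonicity of $V$ gives $V(x)\leq V(0)$ and hence $\bb P(\tau_x>n)\leq c/\sqrt n$ directly (the paper instead uses $\bb P(\tau_x>n)\leq\bb P(\tau_0>n)$).
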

\begin{proof} 
Using Lemma \ref{Lemma 2-0} with $\ee=0$ and the fact that $x + S_{\nu_{n,0}} \geq n^{1/2}$
on the event $\{ \tau_x > \nu_{n,0} \}$, 
we get that, for any $\gamma>0$, there exists a constant $c>0$ such that
for any $n \geq 1$ and $x \in \bb R$, 
\begin{align*}
\mathbb{P} ( \tau_x > n ) 
& =  \mathbb{P} \left( \tau_x > n, \nu_{n,0} \leq N_n^{0} \right)
  + \mathbb{P} \left( \tau_x > n, \nu_{n,0} > N_n^{0} \right)  \nonumber\\
& \leq  \frac{1}{n^{1/2}}
  \mathbb{E} \left(x + S_{\nu_{n,0}}; \tau_x > \nu_{n,0}, \nu_{n,0} \leq N_n^{0} \right)
 + c n^{-\gamma/2}. 
\end{align*}
Using Lemma \ref{lemma E1 and E2}, we have
\begin{align*} 
E_{x,n,0} = \mathbb{E} \left(x + S_{\nu_{n,0}}; \tau_x > \nu_{n,0}, \nu_{n,0} \leq N_n^{0} \right) \leq V(x). 
\end{align*}
Therefore, choosing $\gamma >0$ large enough, there exists a constant $c$ such that, 
for any $n\geq 1$ and $x \in \bb R$,
\begin{align*}
\bb P \left(\tau_x >n \right) \leq c \frac{1+V(x) }{n^{1/2}}. 
\end{align*}
From Lemma \ref{lem-inequality for L}, there exists $c>0$ such that $L\left(\frac{x}{\sigma\sqrt{n}}\right) \geq c L(0)$ 
for any $n\geq 1$ and $0\leq x\leq \frac{\sigma}{2} n^{1/2}$. 
Therefore, for any $0\leq x\leq \frac{\sigma}{2} n^{1/2}$,
\begin{align}\label{Inequa-tau-x-001}
\bb P \left(\tau_x >n \right)  \leq c \frac{1+V(x) }{ n^{1/2}}
\leq c \frac{1+V(x)  L\left(\frac{x}{\sigma\sqrt{n}}\right) }{n^{1/2}}. 
\end{align}
For $x > \frac{\sigma}{2} n^{1/2}$, we have, with some constant $c>0,$ 
\begin{align*} 
\bb P(\tau_x >n ) \leq c H\left(\frac{x}{\sigma \sqrt{n}}\right) = c \frac{x}{  \sqrt{n}} L\left(\frac{x}{\sigma \sqrt{n}}\right) 
\leq c \frac{V(x)  L\left(\frac{x}{\sigma \sqrt{n}}\right)}{  \sqrt{n}}. 
\end{align*}
For $x<0$, 
since $\bb P \left(\tau_x >n \right)  
\leq \bb P \left(\tau_0 >n \right)$, using \eqref{Inequa-tau-x-001}, it holds that 
\begin{align}\label{proba-tau-x-001}
\bb P \left(\tau_x >n \right)  
\leq \bb P \left(\tau_0 >n \right) \leq   \frac{c_1 }{ n^{1/2}}
\leq c_1 \frac{1+ V(x)  L\left(\frac{x}{\sigma \sqrt{n}}\right) }{n^{1/2}}. 
\end{align}
This ends the proof of the lemma.
\end{proof}

As a consequence of Lemmas \ref{lem-inequality for L} and \ref{FCLT}, 
the next result provides relative error estimates for the probability $\bb P(\tau_x >n)$ with the Dirichlet heat kernel $H$
defined by \eqref{Heat func-001}. 

\begin{lemma} \label{Lemma 2-KMTrate}  \ \\ 
1. There exists a constant $c >0$ such that, for any $n\geq 1$, $a \in (0,1]$ and $x \geq a\sigma \sqrt{n}$, 
\begin{align}\label{Relative-Heat-001}
\left| \frac{\mathbb{P} \left( \tau_x > n \right)}{H\left(\frac{x}{\sigma\sqrt{n}}\right) } - 1 \right|  
\leq \frac{c}{a} n^{-\frac{\delta}{2(3+\delta)}}. 
\end{align}
2. Moreover, if $\delta \geq 1$, then there exists a constant $c >0$ such that, for any $n\geq 1$, $a \in (0,1]$ and $x \geq a\sigma \sqrt{n}$, 
\begin{align}\label{Relative-Heat-002}
\left| \frac{\mathbb{P} \left( \tau_x > n \right)}{H\left(\frac{x}{\sigma\sqrt{n}}\right) } - 1 \right|  
\leq \frac{c}{a\sqrt{n}} .
\end{align}
\end{lemma}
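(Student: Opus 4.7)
The plan is to treat the two parts separately, as they rely on different tools. Part 2 follows directly from Nagaev's Berry-Esseen bound \eqref{Nag-bound-001}: when $\delta \geq 1$, Lyapunov's inequality gives $\bb E(|X_1|^3) \leq (\bb E(|X_1|^{2+\delta}))^{3/(2+\delta)} < \infty$, so \eqref{Nag-bound-001} provides $|\bb P(\tau_x > n) - H(x/(\sigma\sqrt{n}))| \leq c n^{-1/2}$ uniformly in $x \geq 0$. For $x \geq a\sigma\sqrt{n}$ with $a \in (0,1]$, the monotonicity of $H$ together with the identity $H(a) = a L(a) \geq a L(1)$ (using that $L$ is decreasing on $\bb R_+$) yields $H(x/(\sigma\sqrt{n})) \geq c a$, and dividing the Berry-Esseen estimate by this produces \eqref{Relative-Heat-002}.

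For Part 1, I will use the KMT-type coupling of Lemma \ref{FCLT}. Introduce a free parameter $s \in (0, \delta/(2(2+\delta)))$ to be optimized, set $\Delta_n = \sigma n^{1/2-s}$, and consider the good coupling event $E_n = \{\max_{1 \leq k \leq n} |S_k - \sigma B_k| \leq \Delta_n\}$, whose complement has probability at most $c n^{-(\delta/2-(2+\delta)s)}$. Since Lemma \ref{FCLT} controls $S_k - \sigma B_k$ only at integer times while Lemma \ref{lemma tauBM} gives the continuous-time exit probability of the Brownian motion, I also need to bound the Brownian modulus on unit intervals, $M_n := \max_{1 \leq k \leq n} \sup_{k-1 \leq t \leq k} |B_t - B_{k-1}|$; by the reflection principle and a union bound $\bb P(M_n > c_0 \sqrt{\log n}) \leq c n^{1 - c_0^2/2}$, which is negligible when $c_0$ is taken sufficiently large. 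On $E_n \cap \{M_n \leq c_0\sqrt{\log n}\}$ I will establish the two-sided inclusion
\begin{align*}
\{\tau_{x - \Delta_n}^{\sigma B} > n\} \subseteq \{\tau_x > n\} \subseteq \{\tau_{x + \Delta_n + \sigma c_0 \sqrt{\log n}}^{\sigma B} > n\}.
\end{align*}
The lower inclusion holds because $(x - \Delta_n) + \sigma B_k \geq 0$ combined with $|S_k - \sigma B_k| \leq \Delta_n$ forces $x + S_k \geq 0$. The upper inclusion uses that $\tau_x > n$ forces $\sigma B_k \geq -x - \Delta_n$ at integer times on $E_n$, and the modulus bound then extends this to all $t \in [0,n]$ at the cost of the extra slack $\sigma c_0 \sqrt{\log n}$. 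Lemma \ref{lemma tauBM} converts the two Brownian exit probabilities into values of $H$.

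Combining the sandwich with $\bb P(E_n^c)$ and $\bb P(M_n > c_0\sqrt{\log n})$, and using the concavity/convexity inequalities of Lemma \ref{lem-inequality for H} with $\ee = (\Delta_n + \sigma c_0\sqrt{\log n})/x \leq c n^{-s}/a$ (valid for $n$ large, the logarithmic factor being dominated; the complementary range where $n^{-s}/a \gtrsim 1$ makes the asserted bound trivial), one deduces
\begin{align*}
\bigl|H\bigl(\tfrac{x \pm \Delta_n + \cdots}{\sigma\sqrt n}\bigr) - H\bigl(\tfrac{x}{\sigma\sqrt n}\bigr)\bigr| \leq c a^{-1} n^{-s}\, H\bigl(\tfrac{x}{\sigma\sqrt n}\bigr).
\end{align*}
Dividing the whole estimate through by $H(x/(\sigma\sqrt n)) \geq c a$ produces a relative error bounded by
\begin{align*}
c a^{-1} \bigl( n^{-s} + n^{-(\delta/2 - (2+\delta)s)} \bigr),
\end{align*}
and choosing $s = \delta/(2(3+\delta))$ equates the two exponents and yields \eqref{Relative-Heat-001}. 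The main technical point to be careful with is the continuous-time vs.\ discrete-time reconciliation in the upper inclusion, which is precisely what forces the introduction of the modulus-of-continuity event; the subsequent optimization of $s$ is purely computational.
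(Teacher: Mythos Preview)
Your proof is correct and follows the same strategy as the paper: KMT coupling with a free parameter $s$, sandwich $\bb P(\tau_x>n)$ between shifted Brownian exit probabilities, perturb $H$ multiplicatively, divide by $H(x/(\sigma\sqrt n))\geq ca$, and balance $s=\delta/(2(3+\delta))$; Part~2 via Nagaev's bound is identical.

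One point worth noting: your modulus-of-continuity event $\{M_n\le c_0\sqrt{\log n}\}$ is unnecessary. Lemma~\ref{FCLT} as stated controls $\sup_{0\le t\le 1}|S_{[nt]}-\sigma B_{nt}|$, i.e.\ the distance between the \emph{continuous-time} Brownian path and the step function $t\mapsto S_{[nt]}$, not merely the values at integer times. On the good event this already forces $x+\sigma B_u\ge x+S_{[u]}-\sigma n^{1/2-s}\ge -\sigma n^{1/2-s}$ for every real $u\in[0,n]$, so the upper inclusion holds with slack $\sigma n^{1/2-s}$ alone. The paper therefore goes straight to the two-sided bound \eqref{BEE-001} without any auxiliary Brownian oscillation control. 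A second minor difference is that the paper expands $H(\tfrac{x}{\sigma\sqrt n}\pm n^{-s})$ via the Lipschitz property of $L$ (Lemma~\ref{lem-inequality for L}) rather than the concavity of $H$ (Lemma~\ref{lem-inequality for H}) that you invoke; your route is in fact slightly more direct, since it yields the multiplicative factor $(1+\ee)$ with $\ee\le n^{-s}/a$ in one step.
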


\begin{proof}
By  Lemma \ref{FCLT}, for any $s>0$, there exists $c = c_{s, \delta}>0$ such that, for any $n\geq 1$ and $x \in \bb R$,
\begin{align} \label{BEE-001}
H\left(\frac{x}{\sigma\sqrt{n}} - n^{-s}\right)  - \frac{c}{n^{\frac{\delta}{2} - s(2+\delta) }}
\leq \bb P(\tau_x >n) 
\leq H\left(\frac{x}{\sigma\sqrt{n}} + n^{-s}\right)  + \frac{c}{n^{\frac{\delta}{2} - s(2+\delta) }}.
\end{align}
Using Lemma \ref{lem-inequality for L},  we have
\begin{align*} 
H\left(\frac{x}{\sigma\sqrt{n}} + n^{-s}\right) 
&= \left(\frac{x}{\sigma\sqrt{n}} + n^{-s}\right) L\left(\frac{x}{\sigma\sqrt{n}} + n^{-s}\right) \\
&\leq \left(\frac{x}{\sigma\sqrt{n}} + n^{-s}\right) L\left(\frac{x}{\sigma\sqrt{n}}\right) \left(1 +  c n^{-s}\right) \\
& \leq  H\left(\frac{x}{\sigma\sqrt{n}}\right) \left(1 +  c n^{-s}\right) 
+  L\left(\frac{x}{\sigma\sqrt{n}}\right) n^{-s} \left(1 +  cn^{-s}\right).
\end{align*}
Taking into account that $x\geq a\sigma\sqrt{n}$, we get
\begin{align} \label{BEE-002}
H\left(\frac{x}{\sigma\sqrt{n}} + n^{-s}\right) 
&\leq  H\left(\frac{x}{\sigma\sqrt{n}}\right) \left(1 +  cn^{-s}\right) 
+ c H\left(\frac{x}{\sigma\sqrt{n}}\right) \frac{1}{an^{s}} \notag\\
& \leq   H\left(\frac{x}{\sigma \sqrt{n}}\right) \left(1 + c' \frac{1}{an^{s}} \right) .
\end{align}
Since $H$ is increasing on $\bb R_+$, there exists $c>0$ such that for any $a \in (0,1]$ and $x\geq a\sigma\sqrt{n}$, we have 
\begin{align}\label{lower-H-func}
H\left(\frac{x}{\sigma\sqrt{n}}\right) \geq H(a)\geq c a. 
\end{align}
From \eqref{BEE-001} and \eqref{BEE-002}, by choosing $s=\frac{\delta}{2(3+\delta)}$, we get the following upper bound: 
\begin{align*}
\frac{\mathbb{P} \left( \tau_x > n \right)}{H\left(\frac{x}{\sigma\sqrt{n}}\right) } - 1  
\leq \frac{c}{a} n^{-\frac{\delta}{2(3+\delta)}}. 
\end{align*}
The lower bound can be obtained in the same way, so that \eqref{Relative-Heat-001} holds.

Now we show the second assertion \eqref{Relative-Heat-002}. 
As a consequence of Nagaev's bound \eqref{Nag-bound-001}, 
there exists $c>0$ such that for any $n \geq 1$ and $x\geq 0$, 
\begin{align*}
\left| \frac{\mathbb{P} \left( \tau_x > n \right)}{H\left(\frac{x}{\sigma\sqrt{n}}\right) } - 1 \right|  
\leq \frac{c}{\sqrt{n} H\left(\frac{x}{\sigma\sqrt{n}}\right)}.
\end{align*}
Since $x\geq a\sigma\sqrt{n}$, the bound \eqref{Relative-Heat-002} follows from \eqref{lower-H-func}. 
\end{proof}

\subsection{Proof of Theorem \ref{Theor-probtauUN-001}}
It suffices to consider the case $n\geq 2$. 
Recall that $M_n = n^{1/2-\ee}$ and $N_n=[\gamma n^{1-2\ee} \log n]$, where $\ee \in (0, \frac{1}{2})$
and $\gamma>0$ will be chosen to be a sufficiently large constant. 
Using the Markov property, we have for any  $n\geq 1$ and $x\in \bb R$,  
\begin{align}\label{start bndtau-001}
\mathbb{P} \left( \tau_x > n \right)  = J_1 + J_2, 
\end{align}
where 
\begin{align*}
& J_1 = \mathbb{P} \left( \tau_x > n, \nu_{n} > N_n \right),  \notag\\
& J_2 = \mathbb{P} \left( \tau_x > n, \nu_{n} \leq N_n \right)  
 =  \sum_{k = 1}^{N_n}
  \int_{M_{n}}^\infty \mathbb{P}(\tau_{x'}>n-k)  
\mathbb{P} \left( x + S_k\in dx', \tau_x > k, \nu_{n} = k \right). 
\end{align*}

\textit{Bound of $J_1$}. 
By Lemma \ref{Lemma 2-0}, for any $\gamma>0$, there exists a constant $c >0$ 
such that for any $n\geq 1$, $x\in \bb R$ and $\ee \in [0, \frac{1}{2}]$, 
\begin{align} \label{upper bound-J1-001}
J_1 \leq  \mathbb{P} \left( \nu_{n} > N_n \right) 
\leq   c n^{-\gamma/2}. 
\end{align}

\textit{Upper bound of $J_{2}$}. 
Let $k \in [1, N_n]$ and $x' \geq M_n = n^{1/2-\ee}$. 
Using Lemma \ref{Lemma 2-KMTrate} with $a=n^{-\ee}$,  we get
\begin{align} \label{Probab-tay n-k-001}
  \mathbb{P}( \tau_{x'} > n-k)  
  \leq   H \left( \frac{x'}{\sigma\sqrt{n-k}} \right) \left(  1 + c n^{\ee} r_{n,\delta}  \right),
\end{align}
where $r_{n,\delta}= n^{-\frac{\delta}{2(3+\delta)}}$ is the remainder term in \eqref{Relative-Heat-001}. 
Substituting \eqref{Probab-tay n-k-001} into $J_2$ gives 
\begin{align} \label{bound J_3-001a}
J_{2} \leq  \left(  1 + c n^{\ee} r_{n,\delta}  \right)  
 \sum_{k = 1}^{N_n} \bb E \left(    H \left( \frac{x+S_k}{\sigma\sqrt{n-k}}\right); \tau_x > k, \nu_n = k  \right). 
\end{align}
Since for any $k \in [1, N_n]$, 
\begin{align} \label{very aux bound sqrt n-001}
\frac{\sqrt{n}}{\sqrt{n-k}}  
= 1+ \frac{k}{\sqrt{n-k}(\sqrt{n}+\sqrt{n-k})} \leq 1+ c\frac{k}{n} \leq 1+ c \frac{N_n}{n},  
\end{align}
by Lemma \ref{lem-inequality for H}, 
we have that for any $x' \geq 0$, 
\begin{align*} 
H \left( \frac{x'}{\sigma\sqrt{n-k}}\right) \leq H \left( \frac{x'}{\sigma\sqrt{n}}\right) \left(1+c\frac{N_n}{n}\right). 
\end{align*}
Combining this with \eqref{bound J_3-001a}, we obtain 
 \begin{align} \label{bound J_3-001c}
J_{2} 
& \leq 
 \left(  1 + c n^{\ee} r_{n,\delta}  \right) \left(1+ c\frac{N_n}{n}\right) 
 \sum_{k = 1}^{N_n} \bb E \left(    H \left( \frac{x+S_k}{\sigma\sqrt{n}}\right); \tau_x > k, \nu_n = k  \right)     \notag \\
  & \leq  \left(  1 + c n^{\ee} r_{n,\delta} + c\frac{N_n}{n}  \right) 
  \sum_{k = 1}^{N_n} \bb E \left(    H \left( \frac{x+S_k}{\sigma\sqrt{n}}\right); \tau_x > k, \nu_n = k  \right)     \notag \\
 & =  \left(  1 + c n^{\ee} r_{n,\delta} + c\frac{N_n}{n}  \right)   (J_{3} + J_{4}),
\end{align}
where 
\begin{align} \label{term-J4-001}
J_{3} 
= \sum_{k = 1}^{N_n} \bb E
      \left(H \left( \frac{x+S_k }{\sigma\sqrt{n}}  \right);   
      \frac{|S_k|}{\sigma\sqrt{n}}  > \alpha n^{-\ee}\log n,  \tau_x > k, \nu_n = k \right)
\end{align}
and
\begin{align} \label{term-J5-001}
J_{4} 
= \sum_{k = 1}^{N_n} \bb E
      \left(H \left( \frac{x+S_k }{\sigma\sqrt{n}}  \right); 
      \frac{|S_k|}{\sigma\sqrt{n}}  \leq \alpha n^{-\ee}\log n,  \tau_x > k, \nu_n = k \right), 
\end{align}
with the constant $\alpha>0$ whose value will be chosen below. 

The remainder of this section focuses on deriving upper and lower bounds for $J_3$ and $J_4$. 
Specifically, we will demonstrate that $J_3$ is negligible, whereas $J_4$ constitutes the dominant term.

\begin{lemma} \label{Bound for J_4}
For any  $\ee \in (0,\frac{1}{2}]$, there exists a constant $c=c_{\ee}>0$ 
such that, for any $n\geq 2$ and $x\in \bb R$,
\begin{align*}
J_{3}  \leq c  \frac{1 + V(x) L\left(\frac{x}{\sigma\sqrt{n}}\right)}{ n^{\frac{1}{2} + \frac{\delta}{2}  - \delta\ee  } } (\log n)^{3/2}.   
\end{align*}
\end{lemma}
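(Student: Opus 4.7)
The strategy is to exploit the moment assumption $\bb E|X_1|^{2+\delta} < \infty$ through the Markov property at the step just before the walk exits the band $|x + S_j| \leq M_n$. On the event $\{\tau_x > k,\, \nu_n = k\}$, the position $x + S_{k-1}$ lies in $[0, M_n]$, so conditionally on $\mathscr F_{k-1}$ the jump $X_k$ must satisfy $X_k > M_n - x - S_{k-1}$; when furthermore $|S_k| > T := \alpha \sigma n^{1/2-\ee}\log n$, this typically forces $|X_k|$ to be far above its natural scale, which is controlled by the $(2+\delta)$-th moment of $X_1$.

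First, I would use the elementary inequality $H(y) \leq \sqrt{2/\pi}\,y$ for $y \geq 0$ (from $H(0)=0$ and $H'(y) = 2\phi(y) \leq 2\phi(0)$) to linearize: on $\{\tau_x > k\}$,
\[
H\left( \frac{x+S_k}{\sigma\sqrt n} \right) \leq \frac{c}{\sigma\sqrt n}\, (x + S_k),
\]
so that
\[
J_3 \leq \frac{c}{\sqrt n} \sum_{k=1}^{N_n} \bb E\bigl( (x+S_k);\, |S_k| > T,\, \tau_x > k,\, \nu_n = k \bigr).
\]
Next, applying the Markov property at time $k-1$ and writing $s = S_{k-1}$, $u = x+s \in [0, M_n]$ on $A_{k-1} := \{\tau_x > k-1,\, \nu_n > k-1\}$, the inner expectation equals $\bb E\bigl( (u+X);\, |s+X|>T,\, u+X > M_n \bigr)$, with $X$ an independent copy of $X_1$.

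In the principal regime $|x| \leq T$, the only contributing branch is $s+X > T$, which forces $X > T - s \geq T/2$ (using $s \leq M_n \leq T/2$). Applying Markov's inequality for the $(2+\delta)$-th moment of $X_1$ yields $\bb P(X > T/2) \leq c\, T^{-(2+\delta)}$ and $\bb E(X;\, X > T/2) \leq c\, T^{-(1+\delta)}$, so
\[
\bb E\bigl( (u+X);\, X > T/2 \bigr) \leq u\, \bb P(X > T/2) + \bb E(X; X > T/2) \leq \frac{cM_n}{T^{2+\delta}} + \frac{c}{T^{1+\delta}} \leq \frac{c}{T^{1+\delta}}.
\]
Summing over $k \leq N_n$ via Lemma \ref{new bound for Ptau_x>n-001} combined with the monotonicity of $L$ on $\bb R_+$ (Lemma \ref{lem-inequality for L}), I would obtain $\sum_{k=1}^{N_n}\bb P(\tau_x > k-1) \leq c\bigl(1+V(x) L(x/\sigma\sqrt n)\bigr)\sqrt{N_n}$. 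Substituting $N_n = [\gamma n^{1-2\ee}\log n]$ and $T = \alpha\sigma n^{1/2-\ee}\log n$, the resulting factor $\sqrt{N_n}/(T^{1+\delta}\sqrt n)$ simplifies to $n^{-1/2-\delta/2+\delta\ee}$ up to logarithms that are in fact smaller than $(\log n)^{3/2}$, matching the target.

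The main obstacle is the regime $x > T$, where the additional case $s + X < -T$ becomes possible. This forces $s < -T$ and hence a large deviation $|S_{k-1}| > x - M_n$, which is controlled by Markov's inequality $\bb P(|S_{k-1}| > x - M_n) \leq c\, k^{1+\delta/2}(x - M_n)^{-(2+\delta)}$, combined with the crude bound $x + S_k \leq x - T$ valid on this event. The factor $V(x)L(x/\sigma\sqrt n)$ in the target, which grows like $x$ for $M_n \leq x \leq \sigma\sqrt n$ and saturates at $\asymp\sqrt n$ beyond, provides the extra room needed to absorb this contribution. Tracking the exponents carefully in this regime (and handling the fact that $\bb P(X \in (M_n - u, x - T - u))$ degrades as $u \to M_n$) is where the main bookkeeping effort lies.
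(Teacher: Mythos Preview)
Your strategy of conditioning at time $k-1$ and exploiting that $u=x+S_{k-1}\in[0,M_n]$ on $\{\tau_x>k-1,\,\nu_n>k-1\}$ is genuinely different from the paper's route, and in the principal regime $0\leq x\leq T$ it is cleaner: the constraint $x+S_k>M_n$ together with $S_k>T$ forces $X_k>T-M_n\geq T/2$, so a single Markov bound on $|X_1|^{2+\delta}$ suffices and Fuk--Nagaev is avoided.  The paper instead introduces the first large-jump time $\eta_n=\inf\{k:|X_k|>\alpha\sigma n^{1/2-\ee}\}$, uses Fuk--Nagaev to kill the contribution $\{\eta_n>\nu_n\}$ (large $|S_k|$ without large individual jumps), and then decomposes around $\eta_n$; this is heavier but treats all $x\in\bb R$ uniformly without case analysis.

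However, your treatment of the regime $x>T$ has a genuine gap.  First, the bound $\bb P(|S_{k-1}|>x-M_n)\leq c\,k^{1+\delta/2}(x-M_n)^{-(2+\delta)}$ via Rosenthal is too weak: summing $k^{1+\delta/2}$ over $k\leq N_n$ produces a factor $N_n^{2+\delta/2}$, and for $x\asymp\sqrt n$ the resulting contribution from branch (b) is of order $n^{1-(4+\delta)\ee}(\log n)^{2+\delta/2}$, which diverges for any $\ee<\tfrac{1}{4+\delta}$.  The correct observation is that for $x>M_n+T$ and $k\geq 2$, the event $\{\nu_n>k-1,\,\tau_x>k-1\}$ already forces $X_1\in[-x,M_n-x]$, hence $|X_1|>x-M_n$, so $\bb P(A_{k-1})\leq c(x-M_n)^{-(2+\delta)}$ with no $k$-dependence; this makes the sum over $k$ harmless.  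Second, the crude linearization $H(y)\leq cy$ discards the factor $L(x/\sigma\sqrt n)$, which for $x\gg\sqrt n$ is of order $\sigma\sqrt n/x$: at $k=1$ and $x$ very large the resulting bound $cx/(\sqrt n\,T^{2+\delta})$ exceeds the target $c\sqrt n/n^{1/2+\delta/2-\delta\ee}$ once $x>n^{3/2-2\ee}(\log n)^{2+\delta}$.  You must either keep the identity $H(y)=yL(y)$ and invoke the Lipschitz property of $L$ (Lemma~\ref{lem-inequality for L}), as the paper does throughout $J_{32}$, or switch to the trivial bound $H\leq 1$ for $x$ beyond a fixed multiple of $\sqrt n$.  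A smaller point: for $x<0$ the inequality $s\leq M_n$ fails (one only has $s\leq M_n+|x|$), so the conclusion $X_k>T/2$ needs a separate argument when $|x|$ is comparable to $T$.
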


\begin{proof}
We will use  the Fuk-Nagaev inequality (Lemma \ref{FNB-inequality}).  Let
\begin{align} \label{def eta_n-001}
\eta_n=\inf \left\{ k\geq 1: \frac{|X_k|}{\sigma \sqrt{n}}   \geq \alpha n^{-\ee} \right\}, 
\end{align} 
where $\alpha>0$ is given in \eqref{term-J4-001}. 
By \eqref{term-J4-001}, we have that for $n\geq 2$,
\begin{align} \label{bound J_4001}
J_{3} 
&= \sum_{k = 1}^{N_n} \bb E
      \left(H \left( \frac{x+S_k}{\sigma\sqrt{n}}  \right);   \frac{|S_k|}{\sigma\sqrt{n}}  > \alpha n^{- \ee}\log n,  \tau_x > k,
      \eta_n > k,  \nu_n = k \right) \notag\\
&\quad + \sum_{k = 1}^{N_n} \bb E
      \left(H \left( \frac{x+S_k}{\sigma\sqrt{n}}  \right);   \frac{|S_k|}{\sigma\sqrt{n}}  > \alpha n^{- \ee} \log n,  \tau_x > k,
      \eta_n \leq k,  \nu_n = k \right) \notag\\
&=: J_{31} + J_{32}.
\end{align}
For the first term $J_{31}$, since $H\leq 1$,  we have 
\begin{align} \label{bound J_41-001}
J_{31}  \leq  \sum_{k = 1}^{N_n}  \bb P \left( \frac{|S_{k}|}{\sigma\sqrt{n}} > \alpha n^{-\ee}\log n,  \eta_n > k \right). 
\end{align}
Applying the Fuk-Nagaev inequality (Lemma \ref{FNB-inequality}) with $x = \alpha \sigma n^{1/2 - \ee}\log n$ 
and $y = \alpha \sigma n^{1/2-\ee}$, we get that for any $1 \leq k \leq N_n$, 
\begin{align}\label{Apply-Fuk-Nagaev-aa}
&\bb P \left( \frac{|S_k|}{\sqrt{n}}  > \alpha \sigma n^{ -\ee}\log n, \eta_n > k \right) \notag\\
& = \bb P \left( |S_k|  > \alpha \sigma  n^{1/2 -\ee}\log n,  
\max_{1 \leq j \leq k}  |X_j|  \leq \alpha  \sigma n^{1/2-\ee}  \right)  \notag\\
& \leq 2 \exp \left\{  \frac{n^{1/2-\ee} \log n}{n^{1/2-\ee}}  \right\}  
     \left( \frac{  \gamma n^{1-2\ee}\log n }{ \alpha^2 n^{1/2 -\ee}  n^{1/2-\ee} \log n   } \right)^{  \frac{n^{1/2-\ee}\log n}{n^{1/2-\ee}}   }  \notag\\
& \leq 2  n  \left( \frac{ \gamma }{ \alpha^2  } \right)^{ \log n } 
 = 2 n^{ - (\log (\alpha^2/\gamma)-1)   } \leq 2  n^{- \log \alpha}, 
\end{align}
where in the last inequality we choose $\alpha \geq  \gamma$.
From \eqref{bound J_41-001} and \eqref{Apply-Fuk-Nagaev-aa}, 
we get the following upper bound for the first term $J_{31}$: for any $\alpha \geq e^2$ and $n\geq 2$, 
\begin{align} \label{final bound for J_31}
J_{31}  \leq  \sum_{k = 1}^{N_n}  \bb P \left( \frac{|S_{k}|}{\sigma \sqrt{n}} > \alpha n^{-\ee }\log n,  \eta_n > k \right)
\leq 2 N_n   n^{- \log \alpha}  \leq c_\gamma  n^{-\frac{1}{2}\log \alpha}. 
\end{align}

We proceed to handle the second term $J_{32}$. Taking into account that $H(x') = x' L(x')$ for $x' \in \bb R$, 
by Lemma \ref{lem-inequality for L},  we get
\begin{align*} 
J_{32}  &\leq \sum_{k = 1}^{N_n} \bb E
      \left(\frac{x+S_k}{\sigma\sqrt{n}}  L \left( \frac{x+S_k}{\sigma\sqrt{n}}  \right);  
      \tau_x > k, \eta_n \leq k,  \nu_n = k \right) \notag \\
&\leq L\left( \frac{x}{\sigma\sqrt{n}} \right)\sum_{k = 1}^{N_n} \bb E
      \left(\frac{x+S_k}{\sigma\sqrt{n}}   \left(1 + c \frac{|S_k|}{\sqrt{n}} \right);   
      \tau_x > k, \eta_n \leq k,  \nu_n = k \right).
\end{align*}
By \eqref{weak  bound for V_n-001}, we have $x + S_{k}  \leq V(x) + |S_{k}|$. 
It follows that
\begin{align} \label{E22-decom-aa}
J_{32} 
&\leq c   \frac{V(x) L\left( \frac{x}{\sigma\sqrt{n}} \right) }{\sqrt{n}} 
\sum_{k = 1}^{N_n} \bb P  \left( 
\tau_x > k, \eta_n \leq k,  \nu_n = k \right) 
      \notag\\
& \quad + c L\left( \frac{x}{\sigma\sqrt{n}} \right) \left(1+ \frac{V(x)}{\sqrt{n}}\right)  
\sum_{k = 1}^{N_n} \bb E \left( \frac{|S_k|}{\sqrt{n}};   \tau_x > k, \eta_n \leq k,  \nu_n = k \right) \notag\\ 
& \quad + c L\left( \frac{x}{\sigma\sqrt{n}} \right)
\sum_{k = 1}^{N_n} \bb E\left(   \frac{S_k^2}{ n};  \tau_x > k, \eta_n \leq k,  \nu_n = k \right) \notag\\
&=: G_1 +G_2 +G_3.
\end{align}

{\it Bound of $G_1$.} 
By the definition of $\eta_n$ (cf.\ \eqref{def eta_n-001}) and the fact that $N_n = [\gamma n^{1-2\ee} \log n]$, 
it holds that for any $\ee \in (0,\frac{1}{2}]$, 
\begin{align}\label{E22-first-part}
 \bb P \left( \eta_n \leq  N_n \right)  
\leq   N_n   \bb P \left( |X_1| >  \alpha \sigma n^{1/2 - \ee}  \right)  
\leq   c\frac{n^{1-2\ee} \bb E|X_1|^{2+\delta}}{ n^{ (1/2 -\ee) (2+\delta) } }\log n  
\leq     \frac{c}{ n^{\frac{\delta}{2} - \delta\ee} } \log n. 
\end{align} 
Using \eqref{E22-first-part} gives the following bound for $G_1$: 
\begin{align} \label{final bound G_1}
G_1  \leq c   \frac{V(x) L\left( \frac{x}{\sigma\sqrt{n}} \right) }{\sqrt{n}} \bb P \left( \eta_n \leq  N_n \right) 
\leq c   \frac{V(x) L\left( \frac{x}{\sigma\sqrt{n}} \right) }{ n^{ \frac{1}{2}+\frac{\delta}{2} - \delta\ee} } \log n.
\end{align}

{\it Bound of $G_2$.} 
Since $|S_{\nu_n}| \leq |S_{\nu_n} - S_k| + |X_k| + |S_{k-1}|$, we write
\begin{align} \label{bound with 3 F-001}
&\sum_{k = 1}^{N_n} \bb E \left( \frac{|S_k|}{\sqrt{n}};   \tau_x > k, \eta_n \leq k,  \nu_n = k \right) \notag\\ 
&= \bb{E} \left( \frac{|S_{\nu_n}|}{\sqrt{n}};  
\tau_x > \nu_n, \eta_n \leq \nu_n \leq  N_n \right)   \notag\\
& \leq \sum_{k=1}^{ N_n }  \bb{E} \left( \frac{|S_{\nu_n}|}{\sqrt{n}};  \tau_x > k,  k \leq  \nu_n \leq  N_n,  \eta_n = k \right)   \notag\\
& \leq  \frac{1}{\sqrt{n}} \sum_{k=1}^{ N_n }  
      \bb{E} \left( |S_{\nu_n} - S_k|;  \tau_x > k,  k \leq  \nu_n \leq  N_n,  \eta_n = k \right)    \notag\\
& \quad  +   \frac{1}{\sqrt{n}} \sum_{k=1}^{ N_n }  
      \bb{E} \left( |X_k|;  \tau_x > k,  k \leq  \nu_n \leq  N_n,  \eta_n = k \right)   \notag\\
& \quad  +   \frac{1}{\sqrt{n}} \sum_{k=1}^{ N_n }  
      \bb{E} \left( |S_{k-1}|;  \tau_x > k,  k \leq  \nu_n \leq  N_n,  \eta_n = k \right)   \notag\\
 & =:  F_1 + F_2 + F_3.
\end{align}
For $F_1$, we have 
\begin{align} \label{bound FFF_1-001}
F_1  
& \leq  \frac{1}{\sqrt{n}} \sum_{k=1}^{ N_n }  
      \bb{E} \left( \max_{ l \in [k, N_n ] } |S_l - S_k|;  \tau_x > k,  \eta_n = k \right)   \notag\\
& \leq  \frac{1}{\sqrt{n}} \sum_{k=1}^{ N_n }  
      \bb{E} \left( \max_{ l \in [k, N_n ] } |S_l - S_k|;  \tau_x > k - 1,  \eta_n = k \right)   \notag\\      
& =   \frac{1}{\sqrt{n}} \sum_{k=1}^{ N_n }  
      \bb{E} \left( \max_{ l \in [k, N_n ] } |S_l - S_k|  \right)
      \bb P \left( \tau_x > k - 1 \right)   \bb P \left( \eta_n = k \right) \notag\\
& \leq  \frac{c\sqrt{N_n}}{\sqrt{n}}   
    \sum_{k=1}^{ N_n }  \bb P \left( \tau_x > k - 1 \right) \bb P \left( \eta_n = k \right),
\end{align}
where for the last line we used Burkholder's inequality (see Lemma \ref{Lem-Burkholder}): 
\begin{align*} 
\bb{E} \left( \max_{ l \in [k, N_n ] } |S_l - S_k|  \right) \leq c\sqrt{N_n}.
\end{align*}
By the definition of $\eta_n$ (cf.\ \eqref{def eta_n-001}), and 
using the fact that $\bb E (|X_1|^{2 + \delta}) < \infty$, we have
 \begin{align} \label{bound FFF_1-002}
\bb P\left( \eta_n = k  \right) = \bb P \left( |X_k| > \alpha \sigma n^{1/2-\ee} \right)
\leq  \frac{c}{ n^{( 1/2-\ee ) (2 + \delta)} } =  \frac{c}{ n^{1 + \frac{\delta}{2}  -2 \ee  -  \ee\delta} }. 
\end{align}
 Since the function $t\mapsto L(t)$ 
 is decreasing on $\bb R_+$, we have
 $L\left(\frac{x}{\sigma\sqrt{k}}\right) \leq L\left(\frac{x}{\sigma\sqrt{n}}\right)$ for any $1 \leq k \leq n$ and $x\geq 0$.
 Therefore, by Lemma \ref{new bound for Ptau_x>n-001}, there exists a constant $c>0$ such that, 
 for any $1 \leq k \leq n$ and $x\geq 0$, 
 \begin{align} \label{bound FFF_1-002a for tau_x}
 \bb P \left( \tau_x > k \right) 
 \leq c \frac{1+V(x) L\left(\frac{x}{\sigma\sqrt{k}}\right) }{k^{1/2}} 
 \leq c \frac{1+V(x) L\left(\frac{x}{\sigma\sqrt{n}}\right) }{k^{1/2}}.
\end{align}
 For $x< 0$, in view of \eqref{proba-tau-x-001}, we have 
 $ \bb P \left( \tau_x > k \right)\leq  \bb P \left( \tau_0 > k \right) 
 \leq  \frac{c}{k^{1/2}} \leq c \frac{1+V(x) L(\frac{x}{\sigma\sqrt{n}}) }{k^{1/2}}$, 
 which proves that \eqref{bound FFF_1-002a for tau_x} holds for all $x\in \bb R$. 
Therefore, recalling that $N_n =[\gamma n^{1-2\ee} \log n]$, we get
\begin{align} \label{bound FFF_1-003}
\sum_{k=1}^{ N_n }  \bb P \left( \tau_x > k - 1 \right)
&\leq  1 + c \left( 1+V(x) L\left(\frac{x}{\sigma\sqrt{n}}\right) \right)  \sum_{k=1}^{ N_n }  
\frac{ 1 }{ k^{1/2} } \notag \\
&\leq  1 + c \left(1+V(x) L\left(\frac{x}{\sigma\sqrt{n}}\right)\right)  N_n^{\frac{1}{2}}  \notag \\
&\leq  c  (n^{1 -  2\ee} \log n)^{\frac{1}{2} }  \left( 1 + V(x) L\left(\frac{x}{\sigma\sqrt{n}}\right) \right). 
\end{align}
Combining \eqref{bound FFF_1-002} and \eqref{bound FFF_1-003}, we get 
\begin{align}\label{ine-sum-eta-tau}
\sum_{k=1}^{ N_n }  \bb P \left( \tau_x > k - 1 \right) \bb P \left( \eta_n = k \right)
\leq  c \frac{\sqrt{\log n}}{ n^{\frac{1}{2} + \frac{\delta}{2}  - \ee  -  \ee\delta} }  \left( 1 + V(x) L\left(\frac{x}{\sigma\sqrt{n}}\right) \right). 
\end{align}
From \eqref{bound FFF_1-001} and \eqref{ine-sum-eta-tau}, 
we obtain the following bound for $F_1$: 
\begin{align}\label{final bound for F_1-001}
F_1 &\leq  \frac{c}{\sqrt{n}}   (n^{1 - 2\ee}\log n)^{1/2}  \frac{\sqrt{\log n}}{ n^{\frac{1}{2} + \frac{\delta}{2}  - \ee  -  \ee\delta} } 
  \left( 1 + V(x) L\left(\frac{x}{\sigma\sqrt{n}}\right) \right) \notag \\
&\leq c \frac{ \log n}{ n^{ \frac{1}{2} + \frac{\delta}{2} - \delta\ee  }  }  \left( 1 + V(x) L\left(\frac{x}{\sigma\sqrt{n}}\right) \right). 
\end{align}

Now we deal with the term $F_2$ defined in \eqref{bound with 3 F-001}. By independence, we have
\begin{align*}
F_2 & \leq  \frac{1}{\sqrt{n}} \sum_{k=1}^{ N_n }  
      \bb{E} \left( |X_k|;  \tau_x > k - 1,  |X_k| > \alpha \sigma n^{1/2 - \ee} \right)   \notag\\
& =  \frac{1}{\sqrt{n}}  \bb{E} \left( |X_1|;   |X_1| > \alpha \sigma n^{1/2 - \ee} \right) 
    \sum_{k=1}^{ N_n }   \bb{P} \left( \tau_x > k - 1 \right). 
\end{align*}
Using the condition $\bb E (|X_1|^{2 + \delta}) < \infty$ 
and the bound \eqref{bound FFF_1-003},
we get that, for any $n \geq 1$ and $x \in \bb R$,  
\begin{align} \label{bound final F_2-001}
F_2 
&\leq  \frac{c}{\sqrt{n}} \frac{1}{ n^{( 1/2-\ee ) (1 + \delta)} }  (n^{1 -  2\ee} \log n)^{\frac{1}{2}}   
 \left( 1 + V(x) L\left(\frac{x}{\sigma\sqrt{n}}\right) \right) \notag \\
&\leq  \frac{c \sqrt{\log n}}{ n^{\frac{1}{2}+ \frac{\delta}{2} -\delta\ee} }  \left( 1 + V(x) L\left(\frac{x}{\sigma\sqrt{n}}\right) \right).
\end{align}

To handle $F_3$ in \eqref{bound with 3 F-001}, we decompose it into two terms:
\begin{align}\label{bound-decom-F3}
F_3 & \leq  \frac{1}{\sqrt{n}} \sum_{k=1}^{ N_n }  
      \bb{E} \left( |S_{k-1}|;  |S_{k-1}|  > \alpha  \sigma n^{1/2 - \ee} \log n,   \eta_n = k \right)   \notag\\
   & \quad  +  \frac{1}{\sqrt{n}} \sum_{k=1}^{ N_n }  
      \bb{E} \left( |S_{k-1}|;  |S_{k-1}|  \leq  \alpha \sigma n^{1/2 - \ee} \log n,  \tau_x > k,   \eta_n = k \right)   \notag\\
   & =: F_{31} + F_{32}, 
\end{align}
where $\alpha \geq e$ is the same as that in \eqref{def eta_n-001}. 
For the first term $F_{31}$ in \eqref{bound-decom-F3}, 
using the fact that $|S_{k-1}| \leq n^{3/2}$ on the event $\{ \eta_n = k \}$ for any $1 \leq k \leq N_n$,
 and the bound \eqref{Apply-Fuk-Nagaev-aa},  we get 
\begin{align}\label{bound-final-F31}
F_{31} & \leq c n  \sum_{k=1}^{ N_n }  
      \bb{P} \left( |S_{k-1}|  > \alpha \sigma  n^{1/2 - \ee}\log n,   \eta_n = k \right)   \notag\\
& \leq c n \sum_{k=1}^{ N_n }  
      \bb{P} \left( |S_{k-1}|  > \alpha \sigma  n^{1/2 - \ee}\log n,  \max_{1 \leq j \leq k-1} |X_j| \leq \alpha \sigma  n^{1/2-\ee} \right)   \notag\\
& \leq   c n^2 n^{- \log \alpha} \leq c n^{- \frac{1}{2} \log \alpha}, 
\end{align}
where in the last inequality we take $\alpha \geq e^3$. 
For the second term $F_{32}$ in \eqref{bound-decom-F3}, 
using \eqref{ine-sum-eta-tau}, we have 
\begin{align}\label{bound-final-F32}
F_{32}  & \leq  c   n^{- \ee }\log n \sum_{k=1}^{ N_n }  
      \bb{P} \left(  \tau_x > k,   \eta_n = k \right)    \notag\\
    &  \leq  c   n^{- \ee } \log n \sum_{k=1}^{ N_n }  
      \bb P \left( \tau_x > k - 1 \right)   \bb P \left( \eta_n = k \right)  \notag\\
      &\leq  c n^{ - \ee}\log n   \frac{\sqrt{\log n}}{ n^{\frac{1}{2} + \frac{\delta}{2}  - \ee  -  \ee\delta} }
\left( 1 + V(x) L\left(\frac{x}{\sigma\sqrt{n}}\right) \right) \notag \\
&\leq  c \frac{ (\log n)^{3/2}}{ n^{\frac{1}{2} + \frac{\delta}{2}  - \delta\ee  } }  \left( 1 + V(x) L\left(\frac{x}{\sigma\sqrt{n}}\right) \right).
\end{align}
Substituting \eqref{bound-final-F31} and \eqref{bound-final-F32} into \eqref{bound-decom-F3},
and taking $\alpha>0$ to be sufficiently large (e.g. $\log \alpha > 1 + \delta - 2 \delta \ee$), 
we get the following bound for $F_3$: 
\begin{align} \label{bound F3-final-010}
F_3 \leq  \frac{c (\log n)^{3/2}}{ n^{\frac{1}{2} + \frac{\delta}{2}  - \delta\ee   } }  \left( 1 + V(x) L\left(\frac{x}{\sigma\sqrt{n}}\right) \right).
\end{align}
Coming back to estimation of $G_2$ from \eqref{E22-decom-aa} and collecting the bounds for $F_1$, $F_2$, $F_3$
from \eqref{bound with 3 F-001},
\eqref{final bound for F_1-001}, \eqref{bound final F_2-001} and \eqref{bound F3-final-010}, we get that 
there exists $c=c_{\ee}>0$ such that
\begin{align} \label{bbb bound with G_2-001}
G_2  & \leq c  L\left( \frac{x}{\sigma\sqrt{n}} \right) \left(1+ \frac{V(x)}{\sqrt{n}}\right)  (F_1 + F_2 + F_3 ) \notag\\ 
& \leq c L\left( \frac{x}{\sigma\sqrt{n}} \right) \left(1+ \frac{V(x)}{\sqrt{n}}\right)  
\frac{1 + V(x) L\left(\frac{x}{\sigma\sqrt{n}}\right)}{ n^{\frac{1}{2} + \frac{\delta}{2}  - \delta\ee   } } (\log n)^{3/2}.
\end{align}

{\it Bound of $G_3$.} 
The estimate of $G_3$ can be obtained by following the same strategy of the proof of $G_2$ in \eqref{bbb bound with G_2-001}. 
As in \eqref{bound with 3 F-001}, using the inequality 
$S_{\nu_n}^2 \leq 3(S_{\nu_n} - S_k)^2 + 3X_k^2 + 3S_{k-1}^2$, we have
\begin{align} \label{bbb bound with G_3-001}
&\sum_{k = 1}^{N_n} \bb E\left(   \frac{S_k^2}{ n};  \tau_x > k, \eta_n \leq k,  \nu_n = k \right) \notag\\
&=  \bb{E} \left( \frac{S_{\nu_n}^2}{n};  
\tau_x > \nu_n, \eta_n \leq \nu_n \leq  N_n \right)   \notag\\
& \leq \sum_{k=1}^{ N_n }  
      \bb{E} \left( \frac{S_{\nu_n}^2}{n};  \tau_x > k,  k \leq  \nu_n \leq  N_n,  \eta_n = k \right)   \notag\\
& \leq  \frac{3}{n} \sum_{k=1}^{ N_n }  
      \bb{E} \left( (S_{\nu_n} - S_k)^2;  \tau_x > k,  k \leq  \nu_n \leq  N_n,  \eta_n = k \right)    \notag\\
& \quad  +   \frac{3}{n} \sum_{k=1}^{ N_n }  
      \bb{E} \left( X_k^2;  \tau_x > k,  k \leq  \nu_n \leq  N_n,  \eta_n = k \right)   \notag\\
& \quad  +   \frac{3}{n} \sum_{k=1}^{ N_n }  
      \bb{E} \left( S_{k-1}^2;  \tau_x > k,  k \leq  \nu_n \leq  N_n,  \eta_n = k \right)   \notag\\
 & =:  F'_1 + F'_2 + F'_3.
\end{align}
For $F'_1$, following the proof of \eqref{bound FFF_1-001}, using Lemma \ref{Lem-Burkholder}
and \eqref{ine-sum-eta-tau}, we obtain the following bound for $F'_1$: 
\begin{align} \label{bbb final bound for F_1-001}
F'_1  
& \leq   \frac{3}{n} \sum_{k=1}^{ N_n }  
      \bb{E} \left( \max_{ l \in [k, N_n ] } (S_l - S_k)^2  \right)
      \bb P \left( \tau_x > k - 1 \right)   \bb P \left( \eta_n = k \right) \notag\\
& \leq  c \frac{N_n }{n}    
    \sum_{k=1}^{ N_n }  \bb P \left( \tau_x > k - 1 \right) \bb P \left( \eta_n = k \right) \notag\\
&\leq  c n^{-2\ee} \log n  \frac{\sqrt{\log n}}{ n^{\frac{1}{2} + \frac{\delta}{2}  - \ee  -  \ee\delta} } 
  \left( 1 + V(x) L\left(\frac{x}{\sigma\sqrt{n}}\right) \right) \notag \\
&\leq  c \frac{(\log n)^{3/2}}{ n^{ \frac{1}{2} + \frac{\delta}{2} +\ee - \delta\ee  }  }  \left( 1 + V(x) L\left(\frac{x}{\sigma\sqrt{n}}\right) \right).
\end{align}

Now we deal with the second term $F'_2$ in \eqref{bbb bound with G_3-001}. By independence, we have
\begin{align*}
F'_2 
& \leq  \frac{c}{n} \sum_{k=1}^{ N_n }  
      \bb{E} \left( X_k^2;  \tau_x > k - 1,  |X_k| > \alpha\sigma n^{1/2 - \ee} \right)   \notag\\
& =  \frac{c}{n}  \bb{E} \left( X_1^2;   |X_1| > \alpha\sigma n^{1/2 - \ee} \right) 
    \sum_{k=1}^{ N_n }   \bb{P} \left( \tau_x > k - 1 \right). 
\end{align*}
Using the condition $\bb E (|X_1|^{2 + \delta}) < \infty$ 
and the bound \eqref{bound FFF_1-003},
we get the following bound for $F'_2$: 
\begin{align} \label{bbb bound final F_2-001}
F'_2 
&\leq  \frac{c}{n} \frac{1}{ n^{( 1/2-\ee ) \delta} }  (n^{1 -  2\ee} \log n)^{\frac{1}{2}}  \left( 1 + V(x) L\left(\frac{x}{\sigma\sqrt{n}}\right) \right) \notag \\
&\leq  
 c \frac{(\log n)^{1/2}}{ n^{\frac{1}{2} + \frac{\delta}{2} + \ee -\delta\ee } }  \left( 1 + V(x) L\left(\frac{x}{\sigma\sqrt{n}}\right) \right). 
\end{align}

To bound the third term $F'_3$ in \eqref{bbb bound with G_3-001}, as in \eqref{bound-decom-F3}, we decompose it into two terms:
\begin{align}\label{bbb bound-decom-F3}
F'_3 & \leq  \frac{c}{n} \sum_{k=1}^{ N_n }  
      \bb{E} \left( S_{k-1}^2;  |S_{k-1}|  >  \alpha \sigma n^{1/2 - \ee} \log n,   \eta_n = k \right)   \notag\\
   & \quad  + \frac{c}{n} \sum_{k=1}^{ N_n }  
      \bb{E} \left( S_{k-1}^2;  |S_{k-1}|  \leq \alpha \sigma n^{1/2 - \ee} \log n,  \tau_x > k,   \eta_n = k \right)   \notag\\
   & =: F'_{31} + F'_{32}. 
\end{align}
For $F'_{31}$, using the fact that $S_{k-1}^2 \leq n^{3}$ on the event $\{ \eta_n = k \}$ for any $1 \leq k \leq N_n$, 
 and the bound \eqref{Apply-Fuk-Nagaev-aa}, 
we get that, for $\alpha \geq e^6$,
\begin{align}\label{bbb bound-final-F31}
F'_{31}  \leq c n^2 \sum_{k=1}^{ N_n }  
      \bb{P} \left( |S_{k-1}|  > \alpha \sigma n^{1/2 - \ee} \log n,   \eta_n = k \right) 
      \leq c n^2 N_n  n^{- \log \alpha}
       \leq  c n^{- \frac{1}{2}\log \alpha}.
\end{align}
For $F'_{32}$, using \eqref{ine-sum-eta-tau}, we have 
\begin{align} \label{bbb bound-final-F32}
F'_{32}   \leq  c   n^{ - 2\ee } \sum_{k=1}^{ N_n }  
  \bb P \left( \tau_x > k - 1 \right)   \bb P \left( \eta_n = k \right)
\leq  c \frac{\sqrt{\log n}}{ n^{ \frac{1}{2} + \frac{\delta}{2} +\ee  - \delta\ee  } }  \left( 1 + V(x) L\left(\frac{x}{\sigma\sqrt{n}}\right) \right).
\end{align}
Substituting \eqref{bbb bound-final-F31}
and \eqref{bbb bound-final-F32} into \eqref{bbb bound-decom-F3}, 
and taking $\alpha>0$ to be sufficiently large (e.g. $\log \alpha > 1 + \delta + 2 \ee - 2 \delta \ee$), 
we obtain 
\begin{align} \label{bbb bound F3-final-010}
F'_3
\leq c \frac{\sqrt{\log n}}{ n^{\frac{1}{2} + \frac{\delta}{2} + \ee  - \delta\ee   } }  \left( 1 + V(x) L\left(\frac{x}{\sigma\sqrt{n}}\right) \right).
\end{align}

Coming back to estimation of $G_3$ from \eqref{E22-decom-aa} and collecting the bounds for $F'_1$, $F'_2$, $F'_3$
from \eqref{bbb bound with G_3-001},
\eqref{bbb final bound for F_1-001}, \eqref{bbb bound final F_2-001} and \eqref{bbb bound F3-final-010}, we get
\begin{align} \label{final bound G_3-001}
G_3  \leq c  L\left( \frac{x}{\sigma\sqrt{n}} \right)  (F'_1 + F'_2 + F'_3 ) 
 \leq c L\left( \frac{x}{\sigma\sqrt{n}} \right)   
\frac{1 + V(x) L\left(\frac{x}{\sigma\sqrt{n}}\right)}{ n^{\frac{1}{2} + \frac{\delta}{2} +\ee  - \delta\ee   } }  (\log n)^{3/2}.
\end{align}

The bound of $J_{32}$ is obtained by putting together the founds for $G_1$, $G_2$ and $G_3$
(cf.\ \eqref{E22-decom-aa}, \eqref{final bound G_1}, \eqref{bbb bound with G_2-001} and \eqref{final bound G_3-001}): 
\begin{align*} 
J_{32} 
&\leq  G_1 + G_2 + G_3 \notag \\
& \leq    c   \frac{ V(x) L\left( \frac{x}{\sigma\sqrt{n}} \right)  }{ n^{ \frac{1}{2}+\frac{\delta}{2} - \delta\ee }} \log n \notag \\
& \quad +  c L\left( \frac{x}{\sigma\sqrt{n}} \right) \left(1+ \frac{V(x)}{\sqrt{n}}\right)  
\frac{1 + V(x) L\left(\frac{x}{\sigma\sqrt{n}}\right)}{ n^{\frac{1}{2} + \frac{\delta}{2}  - \delta\ee  } }  (\log n)^{3/2}  \notag \\
& \quad + c L\left( \frac{x}{\sigma\sqrt{n}} \right)   
\frac{1 + V(x) L\left(\frac{x}{\sigma\sqrt{n}}\right)}{ n^{\frac{1}{2} + \frac{\delta}{2} +\ee  - \delta\ee   } }  (\log n)^{3/2}  \notag \\
& \leq c \frac{1 + V(x) L\left(\frac{x}{\sigma\sqrt{n}}\right)}{ n^{\frac{1}{2} + \frac{\delta}{2}  - \delta\ee  } }  (\log n)^{3/2}, 
\end{align*}
where in the last inequality we used the fact $L$ is bounded on $\bb R$ and 
$V(x)L\left(  \frac{x}{\sigma \sqrt{n}} \right) \leq  c_2 \sqrt{n}$
(cf.\ \eqref{lower bound of VL-001}). 
Together with the bound \eqref{final bound for J_31} where $\alpha >0$ is chosen to be large enough, 
this concludes the proof of Lemma \ref{Bound for J_4}. 
\end{proof}

Next, we establish upper and lower bounds for the main term $J_{4}$, as defined in \eqref{term-J5-001}.

\begin{lemma}\label{Lem-bound-UL-J4}
For any  $\ee \in [0,\frac{1}{2}]$, there exists a constant $c=c_{\ee}>0$ 
such that for any $n\geq 1$ and $x\in \bb R$,
\begin{align} \label{final bound for J4-001bb}
J_4  \leq \frac{V(x) L\left(\frac{x}{\sigma\sqrt{n}} \right) }{\sigma\sqrt{n}}  \left( 1+ c n^{-\ee} \log n  \right). 
\end{align}
Moreover, choosing $\ee = \frac{\delta}{4(3+\delta)} < \frac{1}{4}$,  
we have that there exists a constant $c>0$ such that for any $n\geq 1$ and $x\in \bb R$,
\begin{align} \label{final bound for J4-002}
J_4 \geq 
 \frac{V(x) L\left(\frac{x}{\sigma\sqrt{n}} \right) }{\sigma\sqrt{n}} 
\left( 1- c n^{- \frac{\delta}{4(3+\delta)}} \log n  \right)
- \frac{c}{n^{\frac{1}{2}+ \frac{\delta}{4}}  }. 
\end{align}
\end{lemma}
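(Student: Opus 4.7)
The plan is to exploit the identity $H(y)=yL(y)$ together with the Lipschitz estimate of Lemma \ref{lem-inequality for L} to reduce $J_4$ to an expression involving $E_{x,n,\ee}$, which is itself two-sidedly controlled by $V(x)$ via Lemma \ref{lemma E1 and E2}. On the truncation event $\{|S_k|/(\sigma\sqrt{n})\leq \alpha n^{-\ee}\log n\}$ appearing inside $J_4$, Lemma \ref{lem-inequality for L} gives
\[
L\!\left(\tfrac{x+S_k}{\sigma\sqrt{n}}\right)=L\!\left(\tfrac{x}{\sigma\sqrt{n}}\right)\bigl(1+O(n^{-\ee}\log n)\bigr),
\]
so that after factoring $L(x/(\sigma\sqrt{n}))$ out of the expectation one is left with a sum $\sum_k\bb E(x+S_k;\ldots,\tau_x>k,\nu_n=k)$ to be compared with $E_{x,n,\ee}=\bb E(x+S_{\nu_n};\tau_x>\nu_n,\nu_n\leq N_n)$.

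For the upper bound \eqref{final bound for J4-001bb} I would apply the Lipschitz estimate in the $\leq$ direction, drop the truncation on $|S_k|$ (this only enlarges the sum), recognise the resulting telescoped sum as exactly $E_{x,n,\ee}$, and invoke $E_{x,n,\ee}\leq V(x)$ from the left-hand inequality of Lemma \ref{lemma E1 and E2}. For the lower bound \eqref{final bound for J4-002} I would reverse the Lipschitz inequality; the truncation can no longer be dropped for free, so the restricted sum is written as $E_{x,n,\ee}-B$ with
\[
B=\sum_{k=1}^{N_n}\bb E\!\left(x+S_k;\ \tfrac{|S_k|}{\sigma\sqrt{n}}>\alpha n^{-\ee}\log n,\ \tau_x>k,\ \nu_n=k\right),
\]
and $E_{x,n,\ee}$ is bounded from below by $V(x)(1-cn^{-(1/2-\ee)\delta})-c(1+V(x))/n^{\gamma/8}$ via the right-hand inequality of Lemma \ref{lemma E1 and E2}.

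The main obstacle is the estimation of $B$. The idea is to run the same strategy that worked for $J_3$ in the proof of Lemma \ref{Bound for J_4}, but without the cheap bound $H\leq 1$: the starting point is the pointwise inequality $x+S_k\leq V(x)+|S_k|$ (valid since $V(x)\geq x$), which splits $B\leq V(x)\sum_k\bb P(\ldots)+\sum_k\bb E(|S_k|;\ldots)$. Each piece is further decomposed according to whether a large increment has occurred, i.e.\ whether $\eta_n\leq k$ with $\eta_n$ as in \eqref{def eta_n-001}: the no-jump part is killed by the Fuk-Nagaev inequality (Lemma \ref{FNB-inequality}) exactly as in \eqref{Apply-Fuk-Nagaev-aa} upon taking $\log\alpha$ sufficiently large, while the jump part is handled through the decomposition $|S_{\nu_n}|\leq|S_{\nu_n}-S_k|+|X_k|+|S_{k-1}|$, Burkholder's inequality (Lemma \ref{Lem-Burkholder}), the Markov tail bound $\bb E(|X_1|;|X_1|>r)\leq c\,r^{-(1+\delta)}$, and the persistence estimate of Lemma \ref{new bound for Ptau_x>n-001} used to control $\sum_k\bb P(\tau_x>k-1)$, in the spirit of the $F_1,F_2,F_3$ bounds of \eqref{bound with 3 F-001}. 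This yields $B\leq c(1+V(x))(\log n)^{3/2}/n^{\delta/2-\delta\ee}$; multiplied by $L(x/(\sigma\sqrt{n}))/(\sigma\sqrt{n})$ and combined with $V(x)L\leq c\sqrt{n}$ from Lemma \ref{Lem-properties-of-V-L}, the $V(x)$-piece absorbs into the multiplicative factor $V(x)L/\sqrt{n}\cdot n^{-\ee}\log n$ (since $\delta/2-\delta\ee>\ee$), and the $1$-piece is bounded by $c/n^{1/2+\delta/4}$ (since $\delta/2-\delta\ee\geq\delta/4$). Choosing $\ee=\delta/(4(3+\delta))$ guarantees $\ee<(1/2-\ee)\delta$, so the error $n^{-(1/2-\ee)\delta}$ coming from Lemma \ref{lemma E1 and E2} is absorbed into $n^{-\ee}\log n$; and taking $\gamma\geq 4+2\delta$ reduces the residual $c(1+V(x))/n^{\gamma/8}$ term to the absolute error $c/n^{1/2+\delta/4}$ stated in \eqref{final bound for J4-002}.
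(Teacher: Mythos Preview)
Your proposal is correct and follows essentially the same approach as the paper. The paper likewise uses $H(y)=yL(y)$ and Lemma \ref{lem-inequality for L} to factor out $L(x/(\sigma\sqrt{n}))(1\pm cn^{-\ee}\log n)$, applies $E_{x,n,\ee}\leq V(x)$ for the upper bound, and for the lower bound splits off the same tail term (their $J_6$ is precisely $L(x/(\sigma\sqrt{n}))/(\sigma\sqrt{n})$ times your $B$), bounding it via the $\eta_n$-split with Fuk--Nagaev on $\{\eta_n>k\}$ and the $G_1,G_2$ estimates from the proof of Lemma \ref{Bound for J_4} on $\{\eta_n\leq k\}$; your handling of the residual $(1+V(x))n^{-\gamma/8}$ term and the parameter choices $\ee=\delta/(4(3+\delta))$, $\gamma$ large, match the paper's exactly.
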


\begin{proof}
Since $H(x')=x'L(x')$ for any $x'\in \bb R$, by \eqref{term-J5-001}, we have 
\begin{align} \label{Equality-J4-func-L}
J_{4}  =  \sum_{k = 1}^{N_n}  
\bb E \Bigg(\frac{x+S_k }{\sigma\sqrt{n}}  L\left(\frac{x+S_k  }{\sigma\sqrt{n}} \right); 
    \frac{|S_k|}{\sigma \sqrt{n}} \leq \alpha n^{-\ee}\log n, \tau_x > k, \nu_n = k\Bigg).
\end{align}
By Lemmata \ref{lem-inequality for L} and \ref{lemma E1 and E2},
for any $\ee \in  [0, \frac{1}{2}]$, 
there exists a constant $c>0$ such that for any $n \geq 1$ and $x \in \bb R$, 
\begin{align*} 
J_{4} 
&\leq \frac{ L\left(\frac{x}{\sigma\sqrt{n}} \right) \left( 1+ c n^{-\ee}\log n  \right)}{\sigma\sqrt{n}} 
\sum_{k = 1}^{N_n} 
\bb E \left( x+S_k ;   \tau_x > k, \nu_n = k\right) \notag \\
&=  \frac{L\left(\frac{x}{\sigma\sqrt{n}} \right) \left( 1+  c n^{-\ee} \log n \right)}{\sigma\sqrt{n}} 
\bb E \left( x+S_{\nu_n} ;   \tau_x > \nu_n, \nu_n \leq N_n \right) \notag \\ 
&\leq \frac{V(x) L\left(\frac{x}{\sigma\sqrt{n}} \right) }{\sigma\sqrt{n}}  \left( 1+ c n^{-\ee} \log n  \right), 
\end{align*}
which proves \eqref{final bound for J4-001bb}. 

Now we proceed to give a lower bound for $J_4$. 
By Lemma \ref{lem-inequality for L}, for $1\leq k\leq N_n$, it holds
\begin{align*} 
L\left(\frac{x+S_k  }{\sigma\sqrt{n}} \right) 
\geq L\left(\frac{x}{\sigma\sqrt{n}} \right) \left(1 -  c \frac{|S_k|}{\sqrt{n}}\right). 
\end{align*}
Substituting this into \eqref{Equality-J4-func-L} leads to 
\begin{align} \label{lower bound J_4-000}
J_{4} 
&\geq \sum_{k = 1}^{N_n} \frac{L\left(\frac{x}{\sigma\sqrt{n}} \right) }{\sigma\sqrt{n}} 
\bb E \left( \left(x+S_k\right) \left(1-  c \frac{|S_k|}{\sqrt{n}}\right);  \frac{|S_k|}{\sigma \sqrt{n}} \leq \alpha n^{-\ee}\log n, 
\tau_x > k, \nu_n = k\right)  \notag\\
&\geq \frac{ \left( 1 - c n^{-\ee}\log n  \right)  L\left(\frac{x}{\sigma\sqrt{n}} \right) }{\sigma\sqrt{n}} 
\sum_{k = 1}^{N_n} 
\bb E \left( x+S_k ;   \tau_x > k, \nu_n = k\right) \notag\\
& \quad - \frac{ L\left(\frac{x}{\sigma\sqrt{n}} \right)}{\sigma\sqrt{n}} 
\sum_{k = 1}^{N_n} 
\bb E \left( x+S_k ;  \frac{|S_k|}{\sigma \sqrt{n}} > \alpha n^{-\ee}\log n,  \tau_x > k, \nu_n = k\right) \notag \\
&=:  J_{5} - J_{6}. 
\end{align}
Using Lemma \ref{lemma E1 and E2},  we derive that 
\begin{align*}
\sum_{k = 1}^{N_n} \bb E \left( x+S_k ;   \tau_x > k, \nu_n = k\right)
& = \mathbb{E} \left( x + S_{\nu_n}; \tau_x > \nu_n, \nu_n \leq N_n \right)  \notag\\
& \geq  \left( 1  -  c  n^{-(1/2 - \ee) \delta} \right) V(x). 
\end{align*}
Substituting this into $J_5$ gives 
\begin{align} \label{final bound J_5 001}
J_5 
&\geq \left( 1 - c n^{-\ee}\log n  \right) \left( 1  -  c' n^{-(1/2 - \ee) \delta} \right) \frac{ V(x) L\left(\frac{x}{\sigma\sqrt{n}} \right) }{\sigma\sqrt{n}} \notag\\ 
&\geq \frac{ V(x) L\left(\frac{x}{\sigma\sqrt{n}} \right) }{\sigma\sqrt{n}}  \left( 1 - c n^{-\ee}\log n  -  c' n^{-(1/2 - \ee) \delta} \right). 
\end{align}
Let $\eta_n$ be defined as in \eqref{def eta_n-001}. 
For $J_{6}$, by \eqref{lower bound J_4-000} we have 
\begin{align*} 
J_{6} 
&= \frac{ L\left(\frac{x}{\sigma\sqrt{n}} \right)}{\sigma\sqrt{n}} \sum_{k = 1}^{N_n} \bb E \left( x+S_k ;  \frac{|S_k|}{\sigma \sqrt{n}} > \alpha n^{-\ee}\log n, \tau_x > k, \eta_n > k, \nu_n = k\right) \notag  \\
& \quad + \frac{ L\left(\frac{x}{\sigma\sqrt{n}} \right)}{\sigma\sqrt{n}} \sum_{k = 1}^{N_n} \bb E \left( x+S_k ;  \frac{|S_k|}{\sigma \sqrt{n}} > \alpha n^{-\ee}\log n,  \tau_x > k, \eta_n \leq k,  \nu_n = k\right) \\
&=: J_{61} + J_{62}. 
\end{align*}
For $1\leq k\leq N_n=[\gamma n^{1-2\ee}\log n]$, on the set $\{\eta_n > k\}$ 
it holds that $\{ \max_{1 \leq j \leq k} |X_j| \leq \sigma \alpha n^{1/2-\ee}  \}$ and hence $|S_k| \leq c n^{3/2}$.
Since $x + S_{k} \leq  V(x) + |S_{k}|$, using \eqref{Apply-Fuk-Nagaev-aa} and choosing $\alpha$ large enough, we get
\begin{align} \label{upper bound for J_61-0012}
J_{61} 
&\leq \frac{ L\left(\frac{x}{\sigma\sqrt{n}} \right)}{\sigma\sqrt{n}} \sum_{k = 1}^{N_n} \bb E \left( V(x)+|S_k| ;  \frac{|S_k|}{\sigma \sqrt{n}} > \alpha n^{-\ee}\log n, \eta_n > k, \tau_x > k, \nu_n = k\right) \notag \\
&\leq \frac{ L\left(\frac{x}{\sigma\sqrt{n}} \right) \left(V(x)+cn^{3/2}\right)  }{\sigma\sqrt{n}}  
 \sum_{k = 1}^{N_n} \bb P \left(  \frac{|S_k|}{\sigma \sqrt{n}} > \alpha n^{-\ee}\log n, 
  \max_{1 \leq j \leq k} |X_j| \leq \sigma n^{1/2-\ee} \right) \notag  \\
&\leq cL\left(\frac{x}{\sigma\sqrt{n}} \right)\left(1+V(x)\right) n N_n  n^{- \log \alpha} \notag  \\
&\leq cL\left(\frac{x}{\sigma\sqrt{n}} \right)(1+V(x)) n^{- \frac{1}{2} \log \alpha}.
\end{align}
Taking into account that $x + S_{\nu_n} \leq  \max\{x,0\} + |S_{\nu_n}| \leq V(x) + |S_{\nu_n}|$, we have
\begin{align*} 
J_{62} &\leq \frac{ L\left(\frac{x}{\sigma\sqrt{n}} \right)}{\sigma\sqrt{n}} \sum_{k = 1}^{N_n}  \bb E \left( x+S_{k};  
\tau_x > k, \eta_n \leq k, \nu_n =k \right) \\
&\leq \frac{ V(x) L\left(\frac{x}{\sigma\sqrt{n}} \right)}{\sigma\sqrt{n}} \sum_{k = 1}^{N_n} 
\bb P \left( \tau_x > k,  \eta_n \leq k,   \nu_n = k\right) \\
& \quad + \frac{ L\left(\frac{x}{\sigma\sqrt{n}} \right)}{\sigma\sqrt{n}} \sum_{k = 1}^{N_n} \bb E \left( |S_{\nu_n}| ;  
\tau_x > k, \eta_n \leq k,  \nu_n = k\right) \\
& \leq G_1 + G_2,
\end{align*}
where $G_1$ and $G_2$ are defined by \eqref{E22-decom-aa}. 
Collecting the bounds \eqref{upper bound for J_61-0012}, \eqref{final bound G_1} and \eqref{bbb bound with G_2-001}, 
we get
\begin{align} \label{final bound of J_6 001-00a}
J_6 & \leq  cL\left(\frac{x}{\sigma\sqrt{n}} \right)(1+V(x)) n^{-\frac{1}{2}\log \alpha}  \notag \\
& \quad + c   \frac{V(x) L\left( \frac{x}{\sigma\sqrt{n}} \right)  }{ n^{ \frac{1}{2}+\frac{\delta}{2} - \delta\ee} }  \log n
 + c L\left( \frac{x}{\sigma\sqrt{n}} \right) \left(1+ \frac{V(x)}{\sqrt{n}}\right)  
\frac{1 + V(x) L\left(\frac{x}{\sigma\sqrt{n}}\right)}{ n^{\frac{1}{2} + \frac{\delta}{2}  - \delta\ee   } } (\log n)^{3/2}. 
\end{align}
Using the fact that $V(x) L ( \frac{x}{\sigma\sqrt{n}} ) \leq c\sqrt{n}$ for any $x\in \bb R$ (cf.\ \eqref{lower bound of VL-001}), 
and choosing $\ee = \frac{\delta}{4(3+\delta)} < \frac{1}{4}$ and $\alpha>0$ sufficiently large, we get
\begin{align} \label{final bound of J_6 001}
J_6 \leq  c \frac{1+V(x) L\left(\frac{x}{\sqrt{n}} \right) }{n^{\frac{1}{2} + \frac{\delta}{4}} }.
\end{align}
Since $\ee = \frac{\delta}{4(3+\delta)}$,
from \eqref{lower bound J_4-000}, \eqref{final bound J_5 001} and \eqref{final bound of J_6 001}, we obtain
\begin{align} \label{low bound J_4-001}
J_{4} 
\geq J_5 -J_6 
 \geq \frac{V(x) L\left(\frac{x}{\sigma\sqrt{n}} \right) }{\sigma\sqrt{n}} 
- c \frac{V(x) L\left(\frac{x}{\sigma\sqrt{n}} \right) }{ n^{\frac{1}{2} + \frac{\delta}{4(3+\delta)}}   } \log n  
- \frac{c}{n^{\frac{1}{2}+ \frac{\delta}{4}}  }, 
\end{align}
which ends the proof of \eqref{final bound for J4-002}. 
\end{proof}

\begin{proof}[End of the proof of Theorem \ref{Theor-probtauUN-001}]
From \eqref{bound J_3-001c}, Lemma \ref{Bound for J_4} and \eqref{final bound for J4-001bb} of Lemma \ref{Lem-bound-UL-J4}, 
we derive an upper bound for $J_2$: 
for any $\ee \in (0, \frac{1}{2}]$,  there exists a constant $c=c_{\ee,\delta}$ such that 
\begin{align} \label{upper final for J_3 001}
J_{2}  & \leq \left(  1 + c n^{\ee} r_{n,\delta} + c\frac{N_n}{n}  \right) (J_{3} + J_{4}) \notag \\
 & \leq J_4 + c J_3 + c \left(n^{\ee} r_{n,\delta} + \frac{N_n}{n}  \right) J_{4} \notag \\
&\leq 
\frac{V(x) L\left(\frac{x}{\sigma\sqrt{n}} \right) }{\sigma\sqrt{n}} 
+  c\frac{V(x) L\left(\frac{x}{\sigma \sqrt{n}} \right) }{n^{\frac{1}{2} +\ee} } \log n
+  c\frac{ 1 + V(x) L\left(\frac{x}{\sigma\sqrt{n}} \right) }{ n^{\frac{1}{2} +  \frac{\delta}{2} - \delta\ee  }  } (\log n)^{3/2}  \notag\\
& \quad + c  \left(   n^{\ee} r_{n,\delta} + \frac{N_n}{n}   \right) \frac{V(x) L\left(\frac{x}{\sigma\sqrt{n}} \right) }{\sqrt{n}}, 
\end{align}
where $r_{n,\delta}=n^{-\frac{\delta}{2(3+\delta)}}$ and $N_n=[\gamma n^{1-2\ee}\log n]$. 
Letting  $\ee=\frac{\delta}{4(3+\delta)} < \frac{1}{4}$,
using the fact that $V(x) L ( \frac{x}{\sigma\sqrt{n}} ) \leq c\sqrt{n}$ for any $x\in \bb R$ (cf.\ \eqref{lower bound of VL-001}), 
 and choosing $\alpha > 0$ to be large enough, 
we obtain
\begin{align} \label{Final upper bnd for J_2-002}
J_{2} \leq  \frac{V(x) L\left(\frac{x}{\sigma\sqrt{n}} \right) }{\sigma\sqrt{n}} 
 + \frac{c}{ n^{\frac{1}{2} +  \frac{\delta}{4} }} 
 +  c\frac{V(x) L\left(\frac{x}{\sigma\sqrt{n}} \right) }{n^{\frac{1}{2} + \frac{\delta}{4(3+\delta)}} }  \log n.   
\end{align}
Substituting \eqref{upper bound-J1-001} and \eqref{Final upper bnd for J_2-002} into \eqref{start bndtau-001},
and choosing $\gamma >0$ to be large enough, 
we get the following upper bound: 
\begin{align} \label{Final upper bnd for Ptau_x-002}
\mathbb{P} \left( \tau_x > n \right) - \frac{V(x) L\left(\frac{x}{\sigma\sqrt{n}} \right) }{\sigma\sqrt{n}} 
 \leq \frac{c}{ n^{\frac{1}{2} +  \frac{\delta}{4} }} +  c\frac{V(x) L\left(\frac{x}{\sigma\sqrt{n}} \right) }{n^{\frac{1}{2} + \frac{\delta}{4(3+\delta)}} }  \log n,   
\end{align}
 where $c>0$ depends on $\delta$.

Now we procced to give a lower bound for $J_{2}$. 
In the same way as in the proof of \eqref{bound J_3-001a}, 
using Lemma \ref{Lemma 2-KMTrate} with $a=n^{-\ee}$ and the fact that $H$ is increasing and non-negative on $\bb R_+$,
 we have that, for any $\ee \in (0, \frac{\delta}{2(3+\delta)})$,
\begin{align} \label{low-bound J_2-001}
J_{2} &\geq 
 \left(  1 - c n^{\ee} r_{n,\delta}  \right)  
 \sum_{k = 1}^{N_n} \bb E \left(    H \left( \frac{x+S_k}{\sigma\sqrt{n-k}}\right); \tau_x > k, \nu_n = k  \right) \notag \\ 
&\geq  \left(  1 - c n^{\ee} r_{n,\delta}  \right)  
 \sum_{k = 1}^{N_n} \bb E \left(    H \left( \frac{x+S_k}{\sigma\sqrt{n}}\right); \tau_x > k, \nu_n = k  \right) \notag \\ 
& \geq \left(  1 - c n^{\ee} r_{n,\delta}  \right) J_4, 
\end{align}
where $r_{n,\delta}=n^{-\frac{\delta}{2(3+\delta)}}$, $N_n=[\gamma n^{1-2\ee}\log n]$
and $J_4$ is given by  \eqref{term-J5-001}. 
Combining \eqref{start bndtau-001}, \eqref{low-bound J_2-001} and \eqref{low bound J_4-001},
and choosing $\ee=\frac{\delta}{4(3+\delta)}$, 
 we obtain the following lower bound: 
\begin{align} \label{Final lower bnd for Ptau_x-002}
\mathbb{P} \left( \tau_x > n \right) - \frac{V(x) L\left(\frac{x}{\sqrt{n}} \right) }{\sigma\sqrt{n}} 
 \geq -\frac{c}{ n^{\frac{1}{2} +  \frac{\delta}{4} }} 
 -  c\frac{V(x) L\left(\frac{x}{\sqrt{n}} \right) }{n^{\frac{1}{2} + \frac{\delta}{4(3+\delta)}} }  \log n.    
\end{align}
Putting together \eqref{Final upper bnd for Ptau_x-002} and \eqref{Final lower bnd for Ptau_x-002} 
concludes the proof of Theorem \ref{Theor-probtauUN-001}.
\end{proof}

\section{Proof of the conditioned central limit theorem} \label{sec Theor-probIntUN-002}

In this section, we present the proof of Theorem \ref{Theor-probIntUN-002}, 
maintaining the notation and assumptions introduced in Section \ref{SecProof Theor-probtau-001} throughout.

\subsection{The case of large starting points}

The following result establishes a rate of convergence for the joint probability 
$\bb P(\frac{x+S_n }{ \sigma \sqrt{n}} \leq  u, \tau_x >n),$
which becomes particularly effective for large starting point $x$ of the order $\sqrt{n}$. 
Recall that $N_n =[\gamma n^{1-2\ee} \log n]$, 
where $\ee \in [0, \frac{1}{2}]$ and $\gamma>0$ is chosen to be a sufficiently large constant.

\begin{lemma} \label{Lemma-KMTrate-200} \ \\
1. There exists a constant $c >0$ such that, for any $n\geq 1$, $1\leq k \leq N_n$ and $x,u\geq 0$, 
\begin{align*}
\left| \mathbb{P} \left( \frac{x+S_{n-k}}{\sigma\sqrt{n}}  \leq u,  \tau_x > n-k \right) 
- \int_{0}^{u}  \psi \left(\frac{x}{\sigma\sqrt{n}} , y   \right)  dy  \right|  
\leq   c \left(n^{-\frac{\delta}{2(3+\delta)}} + \frac{N_n}{n}\right).
\end{align*}
\\
2. Moreover, there exists a constant $c >0$ such that, 
for any $n\geq 1$, $1\leq k \leq N_n$, $u\geq 0$, $a \in (0,1]$  and $x \geq \sigma a \sqrt{n}$, 
\begin{align*}
\left| \mathbb{P} \left( \frac{x+S_{n-k}}{\sigma\sqrt{n}}  \leq u,  \tau_x > n-k \right) 
- \int_{0}^{u}  \psi \left(\frac{x}{\sigma\sqrt{n}} , y   \right)  dy  \right|  
\leq \frac{c}{a} H \left( \frac{x}{\sigma\sqrt{n}} \right) \left(n^{-\frac{\delta}{2(3+\delta)}} + \frac{N_n}{n}\right).
\end{align*}

\end{lemma}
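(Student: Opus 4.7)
The plan is to reduce the random walk probability to its Brownian analogue via Sakhanenko's strong coupling (Lemma~\ref{FCLT}), evaluate the Brownian quantity exactly using Lemma~\ref{lemma tauBM}, and control the remaining discrepancies using the smoothness of the heat kernel $\psi$.

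First, I would fix $s = \frac{\delta}{2(3+\delta)}$, which balances the position error $n^{-s}$ and the probability error $n^{-(\delta/2 - s(2+\delta))}$ in Lemma~\ref{FCLT}, and set $\Omega_n := \{\sup_{0\leq t\leq 1}|S_{[nt]} - \sigma B_{nt}| \leq \sigma n^{1/2-s}\}$, so that $\bb P(\Omega_n^c) \leq c\, n^{-\delta/(2(3+\delta))}$.

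Second, on $\Omega_n$ the random walk event is sandwiched between two Brownian events with shifted starting points $\tilde x_\pm := x \pm \sigma n^{1/2-s}$ and endpoints $u \pm 2 n^{-s}$. A direct verification gives
$$\left\{\tau_{\tilde x_-}^{\sigma B} > n-k,\ \tfrac{\tilde x_- + \sigma B_{n-k}}{\sigma\sqrt{n}} \leq u - 2n^{-s}\right\} \cap \Omega_n \ \subseteq\ \left\{\tau_x > n-k,\ \tfrac{x+S_{n-k}}{\sigma\sqrt{n}} \leq u\right\} \cap \Omega_n,$$
together with the symmetric upper inclusion involving $\tilde x_+$ and $u + 2 n^{-s}$. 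Each Brownian probability is then evaluated via Lemma~\ref{lemma tauBM} and the change of variables $z = y/(\sigma\sqrt{n-k})$ as $\int_0^{(u \pm 2n^{-s})\sqrt{n/(n-k)}} \psi(\tilde x_\pm/(\sigma\sqrt{n-k}), z)\, dz$.

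Third, I would compare these Brownian integrals against the target $\int_0^u \psi(x/(\sigma\sqrt{n}), y)\, dy$. Three perturbations appear: a shift in the upper integration limit (of order $n^{-s} + N_n/n$), a shift in the first argument of $\psi$ (of the same order), and the scale change from $\sqrt{n-k}$ to $\sqrt{n}$ controlled by \eqref{very aux bound sqrt n-001}. Since $\psi$ and $\partial_x \psi$ are bounded Gaussian-type functions integrable in $y$, each perturbation contributes $O(n^{-s} + N_n/n)$ in absolute value, which combined with $\bb P(\Omega_n^c) = O(n^{-s})$ yields part~1.

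For part~2, when $x \geq a\sigma\sqrt{n}$ the lower bound $H(x/(\sigma\sqrt{n})) \geq H(a) \geq c\,a$ (as in \eqref{lower-H-func}) converts the absolute errors of part~1 into errors multiplicative in $H(x/(\sigma\sqrt{n}))$. Concretely, the Lipschitz estimate of Lemma~\ref{lem-inequality for L} and the monotonicity bounds of Lemma~\ref{lem-inequality for H} allow each of the three discrepancies above to be rewritten in the form $\frac{c}{a} H(x/(\sigma\sqrt{n}))(n^{-s} + N_n/n)$. The main obstacle I anticipate is the careful bookkeeping in step three: the perturbations of the first argument of $\psi$ and of the scale $\sqrt{n-k}$ must be arranged to appear multiplicatively against $H(x/(\sigma\sqrt{n}))$ in part~2 rather than merely additively, which would weaken the bound precisely in the delicate regime $x \asymp \sqrt{n}$.
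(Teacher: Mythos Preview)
Your proposal is correct and follows essentially the same coupling argument as the paper: invoke Lemma~\ref{FCLT} with $s=\frac{\delta}{2(3+\delta)}$, sandwich the walk event between Brownian events with shifted parameters, evaluate via Lemma~\ref{lemma tauBM}, and control the resulting integral perturbations. The paper's handling of the integral comparison differs only cosmetically: rather than separating the three perturbations as you do, it writes the difference in one piece, splits $\psi(x,y)=\phi(y-x)-\phi(x+y)$, changes variables, and bounds each half via $H$ and Lemma~\ref{lem-inequality for H}; your boundedness/integrability argument for $\psi$ and $\partial_x\psi$ accomplishes the same thing.

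You are, however, overcomplicating part~2. The paper derives it in one line as a \emph{trivial} consequence of part~1: since $H(x/(\sigma\sqrt{n}))\geq H(a)\geq c\,a$ for $x\geq a\sigma\sqrt{n}$ and $a\in(0,1]$, the absolute bound $c(n^{-s}+N_n/n)$ from part~1 is automatically $\leq \frac{c}{a}H(x/(\sigma\sqrt{n}))(n^{-s}+N_n/n)$. There is no need to rework the perturbation estimates multiplicatively; your anticipated ``main obstacle'' does not arise.
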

\begin{proof}
Let $s>0$ and 
denote $A_{n}=\left \{ \sup_{0\leq t\leq 1} \left| S_{[nt]} - \sigma B_{nt} \right| \leq \sigma n^{1/2 - s} \right\}$.
By  Lemma \ref{FCLT}, for any $s>0$ and $n\geq 1$, we have $\bb P(A_n^c) \leq c n^{-\frac{\delta}{2} + s(2+\delta)}$.
Let $m=n-k$ and $S_m(t) = S_{[mt]}$. 
Then, using Lemma \ref{lemma tauBM}, we get that, for any $x\geq - \sigma n^{\frac{1}{2}-s}$, 
\begin{align}  \label{Levyden001}
&\mathbb{P} \left( \frac{x+S_{n-k}}{\sigma\sqrt{n}}  \leq u,  \tau_x > n-k \right) \notag \\
&\leq  \mathbb{P} \left( x+S_{m}(1) \leq u \sigma\sqrt{n},  \sup_{0\leq t\leq 1}\left(x + S_m(t)\right) \geq 0, A_n  \right) + \bb P(A_n^c) \notag\\
&\leq \mathbb{P} \left( x - \sigma n^{\frac{1}{2}-s} + \sigma B_{m} \leq u \sigma\sqrt{n},  
\sup_{0\leq t\leq 1}\left(x+\sigma n^{\frac{1}{2}-s} + \sigma B_{mt}\right) \geq 0\right) 
+ \bb P(A_n^c)\notag \\
& = \bb P \left( x + \sigma n^{\frac{1}{2}-s} +  \sigma B_{m} \in \left[0,  u \sigma\sqrt{n} + 2 \sigma n^{\frac{1}{2} -s} \right],  
\tau_{x + \sigma n^{\frac{1}{2}-s}}^{\sigma B}> m \right)  + \bb P(A_n^c)  \notag\\
&\leq   \int_{0}^{(u \sqrt{n} +  2n^{\frac{1}{2}-s})/\sqrt{m} } \psi\left(\frac{x+\sigma n^{\frac{1}{2}-s}}{\sigma \sqrt{m}},y\right) dy
+ \frac{c}{n^{\frac{\delta}{2} - s(2+\delta) }}, 
\end{align}
where the function $\psi$ is defined by \eqref{Def-Levydens}. 
Since $\psi(x,y) = \phi(y-x) -\phi(x+y)$,  we have
\begin{align} \label{Levyden002}
I &: = \int_{0}^{(u \sqrt{n} + 2 n^{\frac{1}{2}-s})/\sqrt{m} } \psi\left(\frac{x+\sigma n^{\frac{1}{2}-s}}{\sigma\sqrt{m}},y\right) dy 
- \int_{0}^{u } \psi\left(\frac{x}{\sigma\sqrt{n}},y\right) dy \notag \\
&= \int_{0}^{(u \sqrt{n} + 2 n^{\frac{1}{2}-s})/\sqrt{m} } \phi\left( y - \frac{x+\sigma n^{\frac{1}{2}-s} }{\sigma\sqrt{m}} \right) dy 
-  \int_{0}^{u } \phi\left( y -\frac{x}{\sigma \sqrt{n}} \right) dy \notag \\
& \quad - \left(\int_{0}^{(u \sqrt{n} + 2 n^{\frac{1}{2}-s})/\sqrt{m} } \phi\left( y + \frac{x+\sigma n^{\frac{1}{2}-s}}{\sigma\sqrt{m}} \right) dy
-  \int_{0}^{u } \phi\left( y + \frac{x}{\sigma \sqrt{n}} \right) dy  \right). 
\end{align}
By a change of variable, we obtain
\begin{align} \label{Levyden002bb}
I &= \int_{- \frac{x+\sigma n^{\frac{1}{2}-s} }{\sigma\sqrt{m}}}^{\frac{u \sigma \sqrt{n} -x + \sigma n^{\frac{1}{2}-s}}{\sigma\sqrt{m}} } 
\phi\left( y \right) dy 
-  \int_{-\frac{x}{\sigma \sqrt{n}}}^{u -\frac{x}{\sigma \sqrt{n}} } \phi(y) dy \notag \\
& \quad - \left(\int_{\frac{x+\sigma n^{\frac{1}{2}-s} }{\sigma\sqrt{m}}}^{\frac{u \sigma \sqrt{n} + x + 3\sigma n^{\frac{1}{2}-s}}{\sigma\sqrt{m}} } 
\phi\left( y \right) dy
-  \int_{\frac{x}{\sigma \sqrt{n}} }^{u + \frac{x}{\sigma \sqrt{n}}  } \phi\left( y \right) dy  \right) \notag \\
&=: I_1 - I_2. 
\end{align}
For $I_1$, using the fact that $\frac{m}{n}=1+O(N_n/n)$ and Lemma \ref{lem-inequality for H}, we have
\begin{align*} 
| I_1 |  & \leq  \frac{1}{2}\left| H\left(\frac{u \sigma \sqrt{n} -x + \sigma n^{\frac{1}{2}-s}}{\sigma\sqrt{m}} \right) 
         - H\left(\frac{u \sigma \sqrt{n} -x}{\sigma\sqrt{n}} \right) \right| \\
& \quad + \frac{1}{2}\left| H\left(-\frac{ x + \sigma n^{\frac{1}{2}-s}}{\sigma\sqrt{m}} \right) 
         - H\left(-\frac{x}{\sigma\sqrt{n}} \right) \right|
\leq c \left(n^{-s} + \frac{N_n}{n}\right).
\end{align*}
similarly, 
for $I_2$, one can also show that $|I_2| \leq c \left(n^{-s} + \frac{N_n}{n}\right)$. 
Taking $s= \frac{\delta}{2(3+\delta)}$ and combining these bounds 
with \eqref{Levyden001} and \eqref{Levyden002},  the first assertion of the lemma follows.

The second assertion is a direct consequence of the first one, since $H ( \frac{x}{\sigma\sqrt{n}} ) \geq H(a) \geq c a$ for some $c>0$,  
whenever $a\in (0,1]$ and $x \geq \sigma a n^{\frac{1}{2}}$. 
\end{proof}

\subsection{Proof of Theorem \ref{Theor-probIntUN-002}}

Let $x\in \bb R$, $u\in \bb R_+$ and $n\geq 1$.
Using the Markov property, we have 
\begin{align}\label{start bndtau-201}
\mathbb{P} \left( \frac{x+S_n }{ \sigma \sqrt{n}} \leq  u,  \tau_x > n \right)  = I_1 + I_2, 
\end{align}
where 
\begin{align*}
 I_1 & = \mathbb{P} \left( \frac{x+S_n }{ \sigma \sqrt{n}} \leq  u,  \tau_x > n, \nu_{n} > N_n \right),  \notag\\
 I_2 &= \mathbb{P} \left( \frac{x+S_n }{ \sigma \sqrt{n}} \leq  u,  \tau_x > n, \nu_{n} \leq N_n \right)  \notag\\
& = \sum_{k = 1}^{N_n}
  \int_{M_{n}}^\infty \mathbb{P}\left( \frac{x'+S_{n-k} }{ \sigma \sqrt{n}} \leq  u, \tau_{x'}>n-k \right)  
\mathbb{P} \left( x + S_k\in dx', \tau_x > k, \nu_{n} = k \right), 
\end{align*}
where $M_n = n^{1/2-\ee}$ and $N_n=[\gamma n^{1-2\ee} \log n]$, with $\ee \in (0, \frac{1}{2})$
and $\gamma>0$ a sufficiently large constant. 

\textit{Bound of $I_1$}. 
By Lemma \ref{Lemma 2-0}, for any $\gamma>0$, 
there exists a constant $c>0$ such that for any $n\geq 1$ and $x \in \bb R$, 
\begin{align} \label{upper bound-I1-201}
I_1 \leq  \mathbb{P} \left( \nu_{n} > N_n \right)  \leq  c n^{-\gamma/2}.  
\end{align}

\textit{Upper bound of $I_{2}$}. 
Let $1 \leq k \leq N_n=[\gamma n^{1-2\ee} \log n]$ and $x' \geq M_n = n^{1/2-\ee}$. 
Using Lemma \ref{Lemma-KMTrate-200} with $a=n^{-\ee}$,  we get 
\begin{align} 
\label{Probab-Int tau-201}
  \mathbb{P}\left(  \frac{x'+S_{n-k} }{ \sigma \sqrt{n}} \leq  u,  \tau_{x'} > n-k\right)  
&   \leq   \int_{0}^{u}  \psi \left( \frac{x'}{\sigma \sqrt{n}}, y   \right)  dy  
+ c n^{\ee} H\left(\frac{x'}{\sigma \sqrt{n}}\right) \left( r_{n,\delta} + \frac{N_n}{n}\right),
\end{align}
where $r_{n,\delta}= n^{-\frac{\delta}{2(3+\delta)}}$ is the remainder term in Lemma \ref{Lemma-KMTrate-200}.
Substituting \eqref{Probab-Int tau-201} into $I_2$ gives  
\begin{align} \label{bound I_2-002}
I_{2}  &\leq  
 \sum_{k = 1}^{N_n} \bb E \Bigg(    \int_{0}^{u}  \psi \left( \frac{x+S_k}{\sigma \sqrt{n}}, y   \right)  dy
 + c n^{\ee} H\left(\frac{x+S_k}{\sigma \sqrt{n}}\right)  \left(r_{n,\delta} + \frac{N_n}{n} \right);    \tau_x > k, \nu_n = k  \Bigg) \notag \\ 
&=  \sum_{k = 1}^{N_n} \bb E \Bigg(    \int_{0}^{u}  \psi \left( \frac{x+S_k}{\sigma \sqrt{n}}, y   \right)  dy
 + c n^{\ee} H\left(\frac{x+S_k}{\sigma \sqrt{n}}\right)  \left(r_{n,\delta} + \frac{N_n}{n} \right); \notag \\
&\qquad\qquad\qquad\qquad\qquad\qquad\qquad\qquad\qquad\qquad \frac{|S_k|}{\sigma\sqrt{n}}  > \alpha n^{-\ee}\log n,   \tau_x > k, \nu_n = k  \Bigg) \notag \\ 
 &\quad  +  
 \sum_{k = 1}^{N_n} \bb E \Bigg(    \int_{0}^{u}  \psi \left( \frac{x+S_k}{\sigma \sqrt{n}}, y   \right)  dy
 + c n^{\ee} H\left(\frac{x+S_k}{\sigma \sqrt{n}}\right) \left(r_{n,\delta} + \frac{N_n}{n} \right); \notag \\
&\qquad\qquad\qquad\qquad\qquad\qquad\qquad\qquad\qquad\qquad \frac{|S_k|}{\sigma\sqrt{n}}  \leq  \alpha n^{-\ee}\log n, \tau_x > k, \nu_n = k  \Bigg) \notag \\
 & =: I_3 + I_4. 
\end{align}
By \eqref{H as int od Dirichlet kern-001}, we have that, on the event $\{\tau_x > k\}$, 
\begin{align*}
\int_{0}^{u}  \psi \left( \frac{x+S_k}{\sigma \sqrt{n}}, y   \right)  dy 
\leq \int_{0}^{\infty}  \psi \left( \frac{x+S_k}{\sigma \sqrt{n}}, y   \right)  dy 
=  H\left(\frac{x+S_k}{\sigma \sqrt{n}}\right). 
\end{align*}
Hence, applying Lemma \ref{Bound for J_4}, we get that for any $\ee \in (0, \frac{\delta}{2(3 + \delta)})$, 
\begin{align} \label{bound I_3-001}
I_3
&\leq \left(1+cn^{\ee}r_{n,\delta} + cn^{\ee}\frac{N_n}{n} \right) \sum_{k = 1}^{N_n} \bb E  \left( H  \left( \frac{x+S_k}{\sigma \sqrt{n}} \right); 
 \frac{|S_k|}{\sigma\sqrt{n}}  > \alpha n^{-\ee}\log n,   \tau_x > k, \nu_n = k  \right) \notag\\
& = \left(1+cn^{\ee}r_{n,\delta} + cn^{\ee}\frac{N_n}{n} \right) J_3 \notag\\ 
&\leq c  \frac{1 + V(x) L\left(\frac{x}{\sigma\sqrt{n}}\right)}{ n^{\frac{1}{2} + \frac{\delta}{2}  - \delta\ee  } } (\log n)^{3/2}.
\end{align}
Now we deal with $I_4$. We have
\begin{align*} 
I_4 & =  \sum_{k = 1}^{N_n} \bb E \left(    \int_{0}^{u}  \psi \left( \frac{x+S_k}{\sigma \sqrt{n}}, y   \right)  dy; 
 \frac{|S_k|}{\sigma\sqrt{n}}  \leq  \alpha n^{-\ee}\log n, \tau_x > k, \nu_n = k  \right) \notag \\
& \quad + c n^{\ee} \left(r_{n,\delta} + \frac{N_n}{n} \right)\sum_{k = 1}^{N_n} \bb E \left( H\left(\frac{x+S_k}{\sigma \sqrt{n}}\right) ; 
 \frac{|S_k|}{\sigma\sqrt{n}}  \leq  \alpha n^{-\ee}\log n, \tau_x > k, \nu_n = k  \right) \notag \\
&=: I_{41} + I_{42}.
\end{align*}
By \eqref{def of func ell_h-001}, we have $\psi(x',y)=x'\ell_h(x',y)$ for any $x',y\in \bb R$, so that 
\begin{align*} 
I_{41}  =  \sum_{k = 1}^{N_n}  
\bb E \Bigg(\frac{x+S_k }{\sigma\sqrt{n}} \int_0^{u}\ell_h\left(\frac{x+S_k  }{\sigma\sqrt{n}},y \right) dy; 
 \frac{|S_k|}{\sigma \sqrt{n}} \leq \alpha n^{-\ee}\log n, \tau_x > k, \nu_n = k\Bigg).
\end{align*}
On the set $\left\{ \frac{|S_k|}{\sigma \sqrt{n}} \leq \alpha n^{-\ee}\log n \right\}$, 
by Lemma \ref{Holder prop for int ell}, it holds that, for $n$ sufficiently large,  
\begin{align*} 
&\left| \int_0^{u}\ell_h\left(\frac{x+S_k  }{\sigma\sqrt{n}},y \right) dy - \int_0^{u}\ell_h\left(\frac{x}{\sigma\sqrt{n}},y \right) dy \right| 
 \leq c L\left(\frac{x}{\sigma \sqrt{n}}\right) n^{-\ee}\log n.
\end{align*}
This implies
\begin{align} \label{bound I_41-001}
 I_{41} \leq  
& \frac{1 }{\sigma\sqrt{n}}  \left(  \int_0^{u} \ell_h\left(\frac{x }{\sigma\sqrt{n}},y \right) dy 
  +  c  L\left(\frac{x}{\sigma \sqrt{n}}\right) n^{-\ee}\log n \right)  \notag\\
& \quad \times  \sum_{k = 1}^{N_n}  
 \bb E \Bigg(x+S_k  ; 
    \frac{|S_k|}{\sigma \sqrt{n}} \leq \alpha n^{-\ee}\log n, \tau_x > k, \nu_n = k\Bigg) \notag\\
& \leq  \frac{1 }{\sigma\sqrt{n}} \left(  \int_0^{u} \ell_h\left(\frac{x }{\sigma\sqrt{n}},y \right) dy 
  +  c  L\left(\frac{x}{\sigma \sqrt{n}}\right) n^{-\ee}\log n \right) E_{x, n, \ee}  \notag\\
& \leq  \frac{V(x) }{\sigma\sqrt{n}} \left(\int_0^{u}\ell_h\left(\frac{x }{\sigma\sqrt{n}},y \right) dy 
+ c L\left(\frac{x}{\sigma \sqrt{n}}\right)  n^{-\ee}\log n\right), 
\end{align}
where in the last inequality we used Lemma \ref{lemma E1 and E2}.

For $I_{42}$, we use the bound \eqref{final bound for J4-001bb} of Lemma \ref{Lem-bound-UL-J4} to get 
\begin{align} \label{bound I_42-001}
I_{42} \leq c n^{\ee} \left(r_{n,\delta} + \frac{N_n}{n} \right) \frac{V(x) L\left(\frac{x}{\sigma\sqrt{n}} \right) }{\sigma\sqrt{n}}. 
\end{align}
Putting together \eqref{start bndtau-201} --  \eqref{bound I_42-001}, 
letting $\ee=\frac{\delta}{4(3+\delta)} < \frac{1}{4}$ and choosing  $\gamma>0$ to be sufficiently large, 
there exists a constant $c>0$ such that
\begin{align} \label{final upper bound PPtau-001}
\mathbb{P} \left( \frac{x+S_n }{ \sigma \sqrt{n}} \leq  u,  \tau_x > n \right)  
& \leq  c  \frac{1 + V(x) L\left(\frac{x}{\sigma\sqrt{n}}\right)}{ n^{\frac{1}{2} + \frac{\delta}{2}  - \delta\ee  } } (\log n)^{3/2}   \notag \\
& \quad +  \frac{V(x) }{\sigma\sqrt{n}} \left(\int_0^{u}\ell_h\left(\frac{x }{\sigma\sqrt{n}},y \right) dy + c L\left(\frac{x}{\sigma\sqrt{n}}\right) 
 n^{-\ee}\log n\right)\notag  \\
& \quad + c n^{\ee} \left(r_{n,\delta} + \frac{N_n}{n} \right) \frac{V(x) L\left(\frac{x}{\sigma\sqrt{n}} \right) }{\sigma\sqrt{n}} \notag \\
&\leq \frac{V(x) }{\sigma\sqrt{n}} \int_0^{u}\ell_h\left(\frac{x }{\sigma\sqrt{n}},y \right) dy 
+ \frac{ c}{ n^{\frac{1}{2} +  \frac{\delta}{4} }} \notag \\
& \quad + c  \frac{V(x) L\left(\frac{x}{\sigma \sqrt{n}}\right) }{\sigma\sqrt{n}}  n^{-\frac{\delta}{4(3+\delta)}}\log n,
\end{align}
which proves the upper bound, when $n$ is large enough. 
For $n$ small (i.e.\ $n\leq n_0$, where $n_0>0$ is a positive constant) the bound  \eqref{final upper bound PPtau-001}
is trivial, by choosing the constant $c$ large enough.  
Proceeding in the same way as for the term $I_4$, we prove the lower bound
\begin{align} \label{final lower bound PPtau-001}
\mathbb{P} \left( \frac{x+S_n }{ \sigma \sqrt{n}} \leq  u,  \tau_x > n \right)  
\geq & \ \frac{V(x) }{\sigma\sqrt{n}} \int_0^{u}\ell_h\left(\frac{x }{\sigma\sqrt{n}},y \right) dy 
- \frac{ c}{ n^{\frac{1}{2} +  \frac{\delta}{4} }} \notag \\
& - c  \frac{V(x) L\left(\frac{x}{\sigma\sqrt{n}}\right) }{\sigma\sqrt{n}}  n^{-\frac{\delta}{4(3+\delta)}}\log n.
\end{align}
Combining \eqref{final upper bound PPtau-001}, \eqref{final lower bound PPtau-001} and using \eqref{def of func h-001}
finishes the proof of Theorem \ref{Theor-probIntUN-002}.


\vskip5mm
\end{document}